
\documentclass[reqno]{amsart}



\usepackage{amsmath,amssymb,amsthm,amsfonts}
\usepackage{hyperref}

\usepackage{color}

\newcommand{\Blue}{\textcolor{blue}}

\newtheorem{lemma}{Lemma}[section]
\newtheorem{theorem}{Theorem}[section]

\newtheorem{remark}{Remark}[section]

\numberwithin{equation}{section}

\arraycolsep=1.5pt

\newcommand{\dis}{\displaystyle}

\newcommand{\R}{\mathbb{R}}

\newcommand{\semiG}{\mathbb{A}}
\newcommand{\highG}{\Theta}
\newcommand{\highB}{\Lambda}

\newcommand{\sourceG}{S}

\newcommand{\FD}{\mathbf{D}}

\newcommand{\CD}{\mathcal{D}}
\newcommand{\CE}{\mathcal{E}}

\newcommand{\CI}{\mathcal{I}}
\newcommand{\CJ}{\mathcal{J}}

\newcommand{\CN}{\mathcal{N}}

\newcommand{\na}{\nabla}

\newcommand{\al}{\alpha}
\newcommand{\be}{\beta}
\newcommand{\ga}{\gamma}

\newcommand{\la}{\lambda}
\newcommand{\de}{\delta}
\newcommand{\si}{\sigma}
\newcommand{\pa}{\partial}
\newcommand{\ka}{\kappa}
\newcommand{\eps}{\epsilon}

\newcommand{\vth}{\vartheta}

\newcommand{\Ga}{\Gamma}

\newcommand{\lag}{\langle}
\newcommand{\rag}{\rangle}

\begin{document}

\title[Global smooth dynamics of a fully ionized plasma]{Global smooth dynamics of a fully ionized plasma with long-range collisions}

\author[Renjun Duan]{Renjun Duan}
\address{Department of Mathematics, The Chinese University of Hong Kong,
Shatin, Hong Kong}
\email{rjduan@math.cuhk.edu.hk}
\date{\today}

\begin{abstract}
The motion of a fully ionized plasma of electrons and ions is generally governed by the Vlasov-Maxwell-Landau system. We prove the global existence of solutions near Maxwellians to the Cauchy problem of the system for the long-range collision kernel of soft potentials, particularly including the classical Coulomb collision, provided that initial data is smooth enough and decays in velocity variable fast enough. As a byproduct, the convergence rates of solutions are also obtained. The proof is based on the energy method through designing a new temporal energy norm to capture different features of this complex system such as dispersion of the macro component in $\R^3$, singularity of the long-range collisions and regularity-loss of the electromagnetic field.  
\end{abstract}


\maketitle
\thispagestyle{empty}


\section{Introduction}

\subsection{Kinetic equations for a fully ionized plasma}

The motion of a fully ionized plasma consisting of only two species particles (electrons and ions) under the influence of the self-consistent Lorentz force and binary collisions is governed by the kinetic transport equations
\begin{equation}
\label{V}
\begin{split}
 & \pa_t F_+ +\xi\cdot \na_x F_+ + (E+\xi \times B) \cdot \na_\xi F_+ =
  Q(F_+,F_+)+Q(F_+,F_-),
  \\
 & \pa_t F_- +\xi\cdot \na_x F_- - (E+\xi \times B) \cdot \na_\xi F_- =
  Q(F_-,F_+)+Q(F_-,F_-),
  \end{split}
\end{equation}
coupled with the Maxwell equations
\begin{equation}
\label{M}
\begin{split}
&\pa_t E-\na_x\times B=-\int_{\R^3} \xi (F_+-F_-)\,d\xi,\\
&\pa_t B+\na_x\times E =0,\\
&\na_x\cdot E =\int_{\R^3} (F_+-F_-)\,d\xi,\ \ \na_x \cdot
B=0.
\end{split}
\end{equation}
Here $F_\pm=F_\pm(t,x,\xi)\geq 0$ stands for the number densities of ions
$(+)$ and electrons $(-)$ which have position
$x=(x_1,x_2,x_3)\in\R^3$ and velocity
$\xi=(\xi_1,\xi_2,\xi_3)\in\R^3$ at time $t\geq 0$, and $E(t,x)$, $B(t,x)$ denote the
electro and magnetic fields, respectively.
The initial data of the system of equations \eqref{V}-\eqref{M} is given by
\begin{eqnarray*}
&\dis F_\pm(0,x,\xi)=F_{0,\pm}(x,\xi),\ \ E(0,x)=E_0(x),\ \
B(0,x)=B_0(x),
\end{eqnarray*}
satisfying the compatibility conditions
\begin{equation*}
\na_x\cdot E_0=\int_{\R^3}(F_{0,+}-F_{0,-})\,d\xi,\ \ \na_x\cdot
B_0=0.
\end{equation*}

Notice that for simplicity of presentation and without loss of generality, all the physical parameters appearing in the system, such as the particle masses and the light speed, and other involving constants, have been chosen to be unit. Moreover, the relativistic effects are neglectd in our discussion of the model. This is normally justified if the plasma temperature is much lower than the electron rest mass, cf.~\cite[Section 3.6]{HS}. We refer interested readers to \cite[Chapters 2 and 3]{HS} or \cite[Chapter 6]{KT} for the non dimensional representation of the system \eqref{V}-\eqref{M} and also its physical importance in the study of plasma transport phenomena.

The quadratically nonlinear operator $Q$ in \eqref{V}  is described by  the Landau collision mechanism between particles in the form of 
\begin{eqnarray*}
&\dis Q(F,G)=\na_\xi\cdot
\left\{\int_{\R^3}\phi(\xi-\xi_\ast)[{\na_{\xi}F(\xi)
G(\xi_\ast)-F(\xi) \na_{\xi_\ast}G(\xi_\ast)}]\,d\xi_\ast\right\}.
\end{eqnarray*}
The non-negative matrix $\phi$ denotes the Landau collision kernel
\begin{equation*}
    \phi^{ij}(\xi)=C_\phi|\xi|^{\ga+2}\left\{\de_{ij}-\frac{\xi_i\xi_j}{|\xi|^2}\right\},\quad -3\leq \ga< -2,
\end{equation*}
for a constant $C_\phi>0$. Note that $\ga=-3$ corresponds to the Coulomb potential for the classical Landau operator which is originally found by Landau (1936). For the mathematical discussions of the general collision kernel with $\ga>-3$, see the review paper \cite{Villani1} by Villani. 

From now on, for simplicity of presentation,  it is convenient to call \eqref{V}-\eqref{M} the Vlasov-Maxwell-Landau system. In this paper we aim at proving  the global existence of solutions to the Cauchy problem of the system near a global Maxwellian under some conditions on initial data.

\subsection{Reformulation}
Write the normalized global Maxwellian as
\begin{equation}
\notag
\mu=\mu(\xi)=(2\pi)^{ -3/2}e^{-|\xi|^2/2},
\end{equation}
and set
the perturbation in the standard way
\begin{equation}
\notag
F_\pm(t,x,\xi)=\mu + \mu^{1/2} f_\pm(t,x,\xi).
\end{equation}
Use $[\cdot,\cdot]$ to denote the column vector. Set
$F=[F_+,F_-]$ and $
f=[f_+,f_-]$.
Then the Cauchy
problem \eqref{V}-\eqref{M} can be
reformulated as
\begin{equation}\label{VM}
\left\{\begin{split}
&\pa_t f+\xi\cdot\na_x f + q_0 (E+\xi \times B)\cdot \na_\xi f-
E\cdot \xi \mu^{1/2} q_1 +Lf
\\
&\dis \hspace{6cm}=\frac{q_0}{2}E\cdot \xi f +\Ga(f,f),
\\
&\dis \pa_t E-\na_x\times B=-\int_{\R^3} \xi \mu^{1/2}(f_+-f_-)\,d\xi,\\
&\dis \pa_t B+\na_x\times E =0,\\
&\dis\na_x\cdot E =\int_{\R^3}\mu^{1/2} (f_+-f_-)\,d\xi,\ \ \na_x \cdot
B=0,
\end{split}\right.
\end{equation}
with initial data
\begin{eqnarray}\label{VM0}
&\dis f_\pm(0,x,\xi)=f_{0,\pm}(x,\xi),\ \ E(0,x)=E_0(x),\ \
B(0,x)=B_0(x),
\end{eqnarray}
satisfying the compatibility condition
\begin{equation}\label{VM1}
\na_x\cdot E_0=\int_{\R^3} {\mu^{1/2}}(f_{0,+}-f_{0,-})\,d\xi,\ \ \na_x\cdot
B_0=0.
\end{equation}
Here,  $q_0={\rm diag} (1,-1)$, $q_1=[1,-1]$, and the linearized
collision term $L f$ and the nonlinear collision term $\Ga (f,f)$ are
respectively defined by
\begin{equation}
\notag
    L f = [L_+ f, L_- f],\ \ \Ga(f,g)=[\Ga_+(f,g),\Ga_-(f,g)],
\end{equation}
with
\begin{eqnarray*}
 L_{\pm} f &=&{-}2\mu^{-1/2} Q(\mu^{1/2} f_\pm, \mu)\Blue{-}\mu^{-1/2}
    Q(\mu,\mu^{1/2}\{f_\pm+f_\mp\}),\\
\Ga_\pm (f,g)&=&\mu^{-1/2} Q(\mu^{1/2} f_\pm, \mu^{1/2}
    g_\pm)+\mu^{-1/2} Q(\mu^{1/2} f_{{\pm}}, \mu^{1/2} g_{{\mp}}).
\end{eqnarray*}

\subsection{Macro projection, weights and norms}  
As in \cite{Guo-VPL, Guo3}, the null space of the linearized operator $L$ is given by
\begin{equation}
\notag
    \CN={\rm span}\left\{[1,0]\mu^{1/2},\ [0,1]\mu^{1/2}, [\xi_i,\xi_i]\mu^{1/2}
 ~ (1\leq i\leq 3),
    [|\xi|^2,|\xi|^2]\mu^{1/2} \right\}.
\end{equation}
Let $P$ be the orthogonal projection from $L^2_\xi\times L^2_\xi$ to $\CN$.
Given $f(t,x,\xi)$, one can write $P$ as
\begin{multline}
\notag
 P f=a_+(t,x)[1,0]\mu^{1/2}+a_-(t,x)[0,1]\mu^{1/2}
    \\
    +\sum_{i=1}^3b_i(t,x)[1,1]\xi_i\mu^{1/2}+c(t,x)[1,1](|\xi|^2-3)\mu^{1/2},
\end{multline}
where the coefficient functions are determined by $f$ in the way that
\begin{equation}
\notag
\begin{split}
  \dis   & a_\pm= \langle \mu^{1/2}, f_\pm\rangle= \langle \mu^{1/2}, P_\pm f\rangle,
  \\
  \dis  &b_i=\frac{1}{2}\langle \xi_i \mu^{1/2}, f_++f_-\rangle
=\langle \xi_i \mu^{1/2},P_\pm f\rangle,
\\
  \dis  &c= \frac{1}{12}\langle (|\xi|^2-3) \mu^{1/2}, f_+ + f_-\rangle
= \frac{1}{6}\langle (|\xi|^2-3) \mu^{1/2}, P_\pm f\rangle.
    \end{split}
\end{equation}

In what follows, we introduce the weight functions and norms used for the presentation of the main result later on. Fix a constant $\vth$ with $0<\vth\leq 1/4$.
Define
\begin{equation}
\notag
    w_{\tau,\la}=w_{\tau,\la}(t,\xi)=\lag \xi \rag^{(\ga+2) \tau} \exp\left\{ \frac{\la}{(1+t)^\vth}\lag \xi\rag^2\right\},
\end{equation}
where constants $\tau\in \R$ and $\la\geq 0$ are two parameters which may vary in different places. For $f=f(t,x,\xi)$,
define
\begin{equation}
\notag
|f(x)|_{\tau,\la}^2=\int_{\R^3}w_{\tau,\la}^2(t,\xi) |f|^2\,d\xi,\quad \|f\|_{\tau,\la}^2=\int_{\R^3} |f(x)|_{\tau,\la}^2\,dx,
\end{equation}
and
\begin{eqnarray*}
&\dis   |f(x)|_{\FD,\tau,\la}^2=\sum_{i,j=1}^3\int_{\R^3}w_{\tau,\la}^2(t,\xi)\left\{\si^{ij}
    \pa_if\pa_j f+\si^{ij}\frac{\xi_i}{2}\frac{\xi_j}{2}|f|^2\right\}\,d\xi,\\
    &\dis  \|f\|_{\FD,\tau,\la}^2=\int_{\R^3}|f(x)|_{\FD,\tau,\la}^2\,dx,\notag
\end{eqnarray*}
where $\pa_i=\pa_{\xi_i}$ denotes the velocity derivative with respect to $\xi_i$, and $\si^{ij}=\si^{ij}(\xi)$ is the Landau collision frequency given by
\begin{equation}
\notag
    \si^{ij}(\xi)=\phi^{ij}\ast \mu (\xi)=\int_{\R^3} \phi^{ij}(\xi-\xi_\ast)\mu(\xi_\ast)\,d\xi_\ast.
\end{equation}

To study the global existence through the energy method, the temporal energy functional and the corresponding dissipation rate are defined by
\begin{eqnarray}
\CE_{N,\ell,\la}(t)&\sim& \sum_{|\al|+|\be|\leq N}\|\pa_\be^\al f(t)\|_{|\be|-\ell,\la}^2 +\|(E,B)\|_{H^N}^2,
\label{def.e}
\end{eqnarray}
and
\begin{eqnarray}
  \CD_{N,\ell,\la}(t)&=& \sum_{|\al|+|\be|\leq N}\|\pa_\be^\al \{I-P\}f(t)\|_{\FD,|\be|-\ell,\la}^2\notag\\
  &&+\sum_{|\al|\leq N-1} \|\na_x \pa^\al (a_\pm, b,c)\|^2\notag\\
  &&+ \|a_+-a_-\|^2+ \|E\|_{H^{N-1}}^2 +\|\na_x B\|_{H^{N-2}}^2\notag\\
  &&+\frac{\la}{(1+t)^{1+\vth}} \sum_{|\al|+|\be|\leq N}\|\lag \xi\rag \pa_\be^\al \{I-P\}f(t)\|_{|\be|-\ell,\la}^2,\label{def.dr}
\end{eqnarray}
where the integer $N\geq 0$ and $\ell\geq 0$ are parameters which may differ in different places.  For simplicity, we write $w_{\tau}=w_{\tau,0}$ for $\la=0$, and $\CE_{N}(t)=\CE_{N,0,0}(t)$ for $\ell=\la=0$, and likewise for others. Moreover, regarding $\CE_{N,\ell,\la}(t)$ and $\CD_{N,\ell,\la}(t)$, whenever the case $\la>0$ occurs, we always require $\ell-N\geq 0$; it means that the algebraic weight factor is always of positive power when the exponential weight factor is present. Notice that the last term of $ \CD_{N,\ell,\la}(t)$ in \eqref{def.dr} disappears when $\la=0$.

Let constants $N_0$ and $\ell_0$ be fixed properly large, and let constants $\la_0>0$ and $\eps_0>0$ be fixed properly small; the choice of $N_0$, $\ell_0$, $\la_0$ and $\eps_0$ can be seen in the late proof. Set  $N_1=\frac{3}{2}N_0$ and $\ell_1=\frac{1}{2}\ell_0$. 
The temporal energy norm $X(t)$ is defined by
\begin{eqnarray}
  X(t) &=&  \sup_{0\leq s\leq t} \{\CE_{N_1}(s)+(1+s)^{\frac{3}{2}}\CE_{N_1-2}(s)\}\notag\\
  && +\sup_{0\leq s\leq t} \{(1+s)^{-\frac{1+\eps_0}{2}}\CE_{N_1,\ell_1,\la_0}(s)+\CE_{N_1-1,\ell_1,\la_0}(s)\notag\\
  &&\qquad\qquad\qquad\qquad\qquad\qquad+ (1+s)^{\frac{3}{2}} \CE_{N_1-3,\ell_1-1,\la_0}(s)\}\notag\\
  &&+\sup_{0\leq s\leq t} \{ \CE_{N_0,\ell_0,\la_0}(s)+(1+s)^{\frac{3}{2}}\CE_{N_0,\ell_0-1,\la_0}(s)\}\notag\\
  &&+ \sup_{0\leq s\leq t} \{(1+s)^{2(1+\vth)} \|\na_x (E,B)(s)\|_{H^{N_0-1}}^2\}.
  \label{def.X}
\end{eqnarray}
Notice that in order for the algebraic weight factor to gain the  large enough positive power when there is the exponential weight factor $\exp\{\la_0\lag\xi\rag^2/(1+t)^{\vth}\}$, we also let $\ell_0-3N_0$ be properly large.

\subsection{Main result} The main result of the paper is stated as follows.

\begin{theorem}\label{thm.gl}
Assume $-3\leq \ga<-2$. Take $0<\vth\leq 1/4$, and also take constants $N_0$, $\ell_0$ properly large with $\ell_0-3N_0$ properly large, and constants $\la_0>0$, $\eps_0>0$ properly small. Fix a constant $\ell_2>\frac{5}{4}+\frac{N_0}{2}$.
Let $f_0=[f_{0,+},f_{0,-}]$ satisfy
$F_{\pm}(0,x,\xi)=\mu(\xi)+\mu^{1/2}(\xi)f_{0,{\pm}}(x,\xi)\geq 0$. If
\begin{multline}
\label{def.Y0}
Y_0=\sum_{|\al|+|\be|\leq N_0}\|\pa_\be^\al f_0\|_{|\be|-\ell_0,\la_0}+\sum_{|\al|+|\be|\leq N_1}\|\pa_\be^\al f_0\|_{|\be|-\ell_1,\la_0}\\
+\|(E_0,B_0)\|_{H^{N_1}\cap Z_1}
+\|w_{-\ell_2}f_0\|_{Z_1}
\end{multline}
is sufficiently small, then there are appropriately defined energy functionals $\CE_{N,\ell,\la}(t)$ appearing in $X(t)$ such that the Cauchy problem \eqref{VM}, \eqref{VM0}, \eqref{VM1} of the Vlasov-Maxwell-Landau system admits a unique global solution $\big(f(t,x,\xi),E(t,x),B(t,x)\big)$ satisfying
$F_{\pm}(t,x,\xi)=\mu(\xi)+\mu^{1/2}(\xi)f_{\pm}(t,x,\xi)\geq 0$ and
\begin{equation}
\label{thm.gl.1}
X(t)\lesssim Y_0^2,
\end{equation}
for all time $t\geq 0$.,
\end{theorem}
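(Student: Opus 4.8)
The plan is to run a continuity argument on the temporal energy norm $X(t)$: assume $X(t)\leq \delta$ for some small $\delta$ on a maximal interval, derive the closed a priori estimate $X(t)\lesssim Y_0^2 + X(t)^{3/2}$ (or with a higher power), and conclude by absorbing the nonlinear term for $Y_0$ small, together with a local existence and the standard positivity-preservation argument for $F_\pm\geq 0$. The bulk of the work is the a priori estimate, which must be assembled from several weighted energy inequalities at different regularity levels $N$ and different weight exponents $\ell$ and $\lambda$, each decaying at its own rate, and then combined so that the various time weights $(1+s)^{3/2}$, $(1+s)^{-(1+\eps_0)/2}$, $(1+s)^{2(1+\vth)}$ built into $X(t)$ close consistently.

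Concretely, I would proceed in the following order. First, establish the basic weighted energy estimate: for each multi-index order and each admissible $(\ell,\lambda)$, apply $\pa_\beta^\alpha$ to \eqref{VM}, pair against $w_{|\beta|-\ell,\lambda}^2 \pa_\beta^\alpha f$, and use the coercivity $\langle L f, f\rangle \gtrsim |\{I-P\}f|_{\FD}^2$ together with the structure of the Landau kernel $\sigma^{ij}$ to produce $\frac{d}{dt}\CE_{N,\ell,\lambda} + \CD_{N,\ell,\lambda} \lesssim (\text{nonlinear terms})$; the exponential weight $\exp\{\lambda\langle\xi\rangle^2/(1+t)^\vth\}$ contributes, upon differentiating in $t$, the favorable term $\frac{\lambda\vth}{(1+t)^{1+\vth}}\|\langle\xi\rangle\cdots\|^2$ appearing as the last line of \eqref{def.dr}, which is what tames the velocity-growth from the transport term $q_0(E+\xi\times B)\cdot\na_\xi f$ and compensates the soft-potential degeneracy $\gamma<-2$. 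Second, carry out the macroscopic estimates: from the local conservation laws for $(a_\pm,b,c)$ and the Maxwell system, derive (via the standard Guo-type construction of an interactive functional, e.g. moments against $\xi_i\xi_j\mu^{1/2}$ etc.) the dissipation of $\na_x(a_\pm,b,c)$, of $a_+-a_-$, and of $E$; the magnetic field $B$ only gains $\na_x B$ dissipation with one derivative loss, which is the source of the regularity-loss structure and forces the hierarchy $N_1=\frac32 N_0$. Third, establish linear decay: using the low-order energy $\CE_{N_1}$ together with the $Z_1=L^1_x$ bound on data (the $Z_1$ norms in $Y_0$), run a Duhamel/spectral or time-weighted energy argument to get the $(1+s)^{3/2}$-decay of $\CE_{N_1-2}$ and the $(1+s)^{2(1+\vth)}$-decay of $\na_x(E,B)$ in $H^{N_0-1}$; the velocity weight $w_{-\ell_2}$ with $\ell_2>\frac54+\frac{N_0}{2}$ enters here to control the $L^1_x\to L^2_x$ estimate of the nonlinear source in velocity-weighted spaces.

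Finally, I would interpolate and bootstrap: combine the differential inequalities with the already-established decay rates, use time-weighted estimates of the form $\frac{d}{dt}\{(1+t)^k\CE\} \lesssim k(1+t)^{k-1}\CE + (1+t)^k(\text{controlled nonlinearity})$, and feed the decay of lower-order norms into the nonlinear terms of higher-order norms so that every summand in \eqref{def.X} is bounded by $Y_0^2 + X(t)^{3/2}$. The main obstacle will be step one's nonlinear and weighted estimates under the borderline Coulomb scaling $\gamma=-3$: the Landau operator's dissipation $|\cdot|_{\FD}$ is extremely weak in the high-velocity regime, the transport term $\xi\times B\cdot\na_\xi$ is genuinely of the same order as the top velocity derivative, and the electromagnetic nonlinearities $E\cdot\xi f$ have no smallness in $\xi$ — so the delicate point is to show that the time-dependent exponential weight's dissipative gain $\frac{\lambda}{(1+t)^{1+\vth}}\|\langle\xi\rangle\cdots\|^2$ is exactly strong enough to absorb all these, while the slowly-growing factor $(1+s)^{-(1+\eps_0)/2}$ in front of $\CE_{N_1,\ell_1,\lambda_0}$ keeps the top-order weighted energy from blowing up. Balancing the exponents $\vth$, $\eps_0$, $\lambda_0$, $\ell_0$, $N_0$ so that all these competing requirements are simultaneously met is the technical heart of the proof.
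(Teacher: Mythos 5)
Your overall architecture matches the paper's: a bootstrap on $X(t)$ under the a priori assumption $X(t)\leq\de^2$, weighted Lyapunov inequalities for the family $\CE_{N,\ell,\la}$ with the time--velocity exponential weight supplying the extra dissipation $\frac{\la\vth}{(1+t)^{1+\vth}}\|\lag\xi\rag\cdots\|^2$, Guo-type macroscopic dissipation, linear $Z_1$--based decay fed through Duhamel to get the $(1+s)^{2(1+\vth)}$ decay of $\na_x(E,B)$, and time-weighted interpolation to close every summand of $X(t)$ with $Y_0^2+X^2(t)$. The identification of where $\ell_2>\frac54+\frac{N_0}{2}$ enters is also correct.

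There is, however, one step that would fail as you have described it, and it is precisely the difficulty the paper singles out as the second consequence of regularity loss. In the weighted estimate for $\CE_{N_1,\ell_1,\la_0}$ you must confront the \emph{linear} term
$\sum_{|\al|=N_1}\lag \pa^\al E\cdot\xi\mu^{1/2},\,w_{-\ell_1,\la_0}^2\pa^\al f\rag$
at the top order. Cauchy--Schwarz turns it into $\|\pa^\al E\|^2$ plus a piece of the dissipation, but $\|\pa^{\al}E\|^2$ with $|\al|=N_1$ is \emph{not} contained in $\CD_{N_1}$ (the dissipation only controls $\|E\|_{H^{N_1-1}}^2$), so it cannot be absorbed, and it is not small either. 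Your generic time-weighted inequality $\frac{d}{dt}\{(1+t)^k\CE\}\lesssim k(1+t)^{k-1}\CE+\cdots$ is written for $k>0$ and does not address this. The paper's resolution is to multiply the \emph{unweighted} top-order inequality $\frac{d}{dt}\CE_{N_1}+\ka\CD_{N_1}\lesssim\cdots$ by $(1+t)^{-\eps_0}$, i.e.\ to take $k=-\eps_0<0$; then the term $-k(1+t)^{k-1}\CE_{N_1}$ moves to the \emph{left} and produces the new, degenerate dissipation $\frac{\eps_0}{(1+t)^{1+\eps_0}}\sum_{|\al|=N_1}\|\pa^\al(E,B)\|^2$. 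Correspondingly, the weighted inequality for $\CE_{N_1,\ell_1,\la_0}$ is multiplied by $(1+t)^{-(1+\eps_0)/2}$, so that the offending term becomes, after Cauchy--Schwarz, $(1+t)^{-1-\eps_0}\|\pa^\al E\|^2+\|\lag\xi\rag^{\frac{\ga+2}{2}}\pa^\al f\|^2$, both time-integrable by the recovered dissipation and by $\int_0^t\CD_{N_1}\,ds$. This coupling between the unweighted and weighted top-order estimates (the reason the prefactor $(1+s)^{-(1+\eps_0)/2}$ sits in front of $\CE_{N_1,\ell_1,\la_0}$ in \eqref{def.X} at all) is a necessary ingredient that your plan does not contain; without it the top-order weighted estimate cannot be closed.
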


\begin{remark}
The following are several points  to remark on this theorem:
\begin{itemize}

\item[(a)] 
Although only the soft potential case $-3\leq \ga<-2$ is considered here, the hard case for $\ga\geq -2$ could be much simpler.
  
\item[(b)] 
Even when $\vth$ is strictly less than $1/4$, it can sitll be shown from the late proof that
  \begin{equation}
\notag
\sum_{1\leq |\al|\leq N_0}(\|\pa^\al f\|+\|\pa^\al (E,B)\|)
\end{equation} 
decays in time with the rate $(1+t)^{-5/4}$, cf.~\eqref{lem.emd.p3}.

\item[(c)] 
By the definition of $X(t)$, the uniform-in-time inequality \eqref{thm.gl.1} implies that the weighted high-order energy functional $\CE_{N_1,\ell_1,\la_0}(t)$, particularly 
\begin{equation}
\notag
\sum_{|\al|+|\be|=N_1} \|\pa_\be^\al \{I-P\}f\|_{|\be|-\ell_1,\la_0}^2,
\end{equation}
may increase in time with the rate $(1+t)^{(1+\eps_0)/2}$.

\item[(d)] 
Setting $N_1=\frac{3}{2}N_0$ and $\ell_1=\frac{1}{2}\ell_0$ is just for simplicity of presentation. The general choice of $N_1$, $\ell_1$ in terms of $N_0$, $\ell_0$ is possible. In addition, for brevity, $N_0$, $\ell_0$ and $\ell_0-3N_0$ are assumed to be properly large. We would not track in this paper the critical values of $N_0$, $\ell_0$ and all other parameters. However, it should remain an interesting problem to design a new energy norm with the optimal choice of regularity and velocity integrability on initial data in order to ensure the global existence of the Cauchy problem. 

\item[(e)] 
The similar approach developed in this paper could be immediately applied to the case of the periodic box. In that case, we need to assume all the conservation laws of the system so that the Poincar\'{e} inequality can be applied to deal with the zero-order dissipation of the macro component $Pf$ and the magnetic field $B$.
\end{itemize}
\end{remark}

In what follows we mention some work only related to this paper; interested readers may refer to them for more references therein. Notice that there
are different approaches in establishing  the mathematical theories on
 the Landau equation, see \cite{AV,DL, DV, Lio,Vi, Z}. In the perturbation framework, Guo \cite{Guo-L} firstly established the global existence for the purely Landau equation with Coulomb potentials in the absence of any force; see also \cite{HY}. Very recently the same author \cite{Guo-VPL} made a further progress for the Vlasov-Poisson-Landau system on the periodic box when the self-consistent potential force is present; see \cite{DYZ-VPL} and \cite{SZ} for generalizations of the result to the case of the whole space. We pointed out that Duan-Yang-Zhao \cite{DYZ-VPL} used a different approach arising from the study of the Vlasov-Poisson-Boltzmann system \cite{DYZ,DYZ-s}.

When the plasma transport is effected by the Lorentz force coupled with the Maxwell equations, there are two cases in which the global existence of solutions near the global Maxwellian have been well studied. One case is to take the Landau operator with $\ga\geq -1$. In this case the problem was solved by Guo \cite{Guo3} although the Vlasov-Maxwell-Boltzmann system for only the hard-sphere model is considered there.  Notice from Lemma \ref{lem.Ld} that  if $\ga\geq -1$, i.e.~$\ga+2\geq 1$, then compared to the soft potential case, the linearized Landau operator has the stronger dissipative property which can control those nonlinear terms with the velocity-growth rate $|\xi|$, typically occurring to $E\cdot \xi f$. The other case is the relativistic version of the Vlasov-Maxwell-Landau system studied by Strain-Guo \cite{SG-R}. In this case, due to the boundedness of the relativistic velocity and the special form of the relativistic Maxwellian, the velocity growth phenomenon in the nonlinear term disappears, and instead the more complex property of the relativistic collision operator was analyzed there.  

Though there are a few results mentioned above, it still remains unknown to obtain the global existence for the Vlasov-Maxwell-Landau system for the Coulomb potential in the classical sense because of the quite complex property of the system that we will point out in more detail next subsection. To the knowledge of our best, Theorem \ref{thm.gl} is the first result in this direction. The proof of  Theorem \ref{thm.gl} is based on the energy method, 
cf.~\cite{Guo-L, Guo3} or \cite{LY-S},  together with a new quite delicate bootstrap argument. In particular, the introduction of the temporal energy norm $X(t)$, which is the main strategy of the proof,  captures most of important features of the coupling system.

\subsection{Difficulty and idea in the proof}

The Vlasov-Maxwell-Landau system under consideration has the following three typical features which result to different mathematical difficulties:
\begin{itemize}
  \item the degeneration of dissipation at large velocity for the linearized Landau operator with soft potentials;
  \item the velocity-growth of the nonlinear term, as mentioned before; 
  \item the regularity-loss of the electromagnetic field.
\end{itemize} 
The first feature makes it impossible to control the velocity derivative of the linear transport term $\xi\cdot \na_x f$ without any velocity weight. To deal with it, the algebraic weight factor $\lag \xi\rag^{(\ga+2)|\be|}$ depending on the order of velocity differentiation was introduced in \cite{Guo-L}. This, however, induces another difficulty when estimating the nonlinear term $(E+\xi\times B)\cdot \na_\xi f$, since the term contains one velocity differentiation so that the extra velocity-growth with rate $|\xi|^{-(\ga+2)}$ is produced. Notice that this new trouble as well as the obvious velocity-growth in $E\cdot \xi f$ happen to the nonlinear term only. \cite{DYZ} then introduced a time-velocity dependent exponential weight factor $\exp\{\la_0\lag\xi\rag^2/(1+t)^{\vth}\}$ which from the weighted estimate on $\pa_t f$ indeed leads to the following additional good term in the dissipation rate
\begin{equation}
\notag
\frac{\la_0\vth}{(1+t)^{1+\vth}} \iint \lag \xi\rag^2 w_{|\be|-\ell,\la_0}^2(t,\xi)|\pa_\be^\al\{I-P\}f|^2\,dxd\xi.
\end{equation}
See also \cite{DYZ-s, DYZ-VPL}. Thus, as long as those nonlinear velocity-growth terms  contains a portion decaying in time faster than $(1+t)^{1+\vth}$, they can be controlled by the above dissipation term. 

The third feature mentioned before can be seen from the definition of the dissipation rate functional $\CD_{N,\ell,\la}(t)$ in which the $L^2$ norm of $N$th-order spatial derivatives of $(E,B)$ is missing. Here, the regularity-loss results essentially from the coupling of the hyperbolic Maxwell equations but not from the technique of the approach; see \cite{D-em} for the analysis of the Green's function of the damping Euler-Maxwell system. Due to the regularity-loss, two more difficulties appear. One difficulty is that one can not expect derivatives of the electromagnetic field  $(E,B)$ of all orders up to the largest number $N_1$ to decay time fast enough, so that the estimate on those nonlinear velocity-growth terms is still a problem. To solve it, when estimating the weighted inner product term
\begin{equation}
\notag
\sum_{|\al|+|\be|\leq N_1}\lag \pa^\al_\be \{\frac{1}{2}E\cdot \xi f-(E+\xi\times B)\cdot \na_\xi f\}, w_{|\be|-\ell_1,\la_0}^2\pa_\be^\al f\rag,
\end{equation}
we use the time-decay property for the only low-order derivatives of both $(E,B)$ and $f$. Notice that compared to the high-order energy functional $\CE_{N_1,\ell_1,\la_0}(t)$ of $f$, the low-order  one must be assigned with the higher velocity weight to absorb the extra velocity-growth factor.  That is the reason why we introduce into the $X(t)$ norm two energy functionals $\CE_{N_1,\ell_1,\la_0}(t)$ and $\CE_{N_0,\ell_0,\la_0}(t)$ with the approximate choice of $N$ and $\ell$. 

The second difficulty due to regularity-loss is the control of inner product terms
\begin{equation}
\notag
\sum_{|\al|=N_1}\lag \pa^\al E\cdot \xi\mu^{1/2},w_{-\ell_1,\la_0}^2\pa^\al f\rag,
\end{equation}
which arises from the weighted estimate on $\pa^\al f$. To deal with it, we use the time-weighted energy estimate with the time rate of negative power; the similar technique has been used in \cite{HK}. In fact, starting from the Lyapunov inequality 
\begin{equation}
\notag
\frac{d}{dt}\CE_{N_1}(t)+\kappa \CD_{N_1}(t)\lesssim h.o.t.,
\end{equation}
where $h.o.t.$ denotes the high order terms only, it follows that
\begin{multline}
\notag
\frac{d}{dt} [(1+t)^{-\eps_0}\CE_{N_1}(t)]+\ka (1+t)^{-\eps_0}\CD_{N_1}(t)\\
+\frac{\eps_0}{(1+t)^{1+\eps_0}}\CE_{N_1}(t)\lesssim (1+t)^{-\eps_0}\times  \{h.o.t.\}.
\end{multline}
Therefore, provided that the term on the right is time integrable, one can recover the following new dissipation term
\begin{equation}
\notag
\frac{\eps_0}{(1+t)^{1+\eps_0}}\sum_{|\al|=N_1}\|\pa^\al (E,B)\|^2.
\end{equation}
Although this dissipation term is degenerate in large time, it is enough to control other trouble terms related to the $N_1$th-order derivatives of $(E,B)$.

We finally mention Theorem \ref{thm.lb} concerning the time-decay property of the linearized system. Theorem \ref{thm.lb} not only plays a key role of dealing with the dispersion of the macro component $Pf$ in $\R^3$ due to the degeneration of $L$, but also its proof, particularly the Lyapunov inequality \eqref{ad.p1}, fully reveals the optimal dissipative structure of the linearized system, which further motivates the design of the energy norm $X(t)$. We remark that inspired by \eqref{ad.p1}, it would be an interesting problem to consider the spectrum \cite{U74,DL} or Green's function \cite{LY-G} of such complex system.

The rest of the paper is arranged as follows. In Section \ref{sec2}, we list some known facts for the macro structure of the system and also the basic estimates on $L$ and $\Ga$. In Section \ref{sec3}, we obtain the time-decay property of the linearized homogeneous system. In Section \ref{sec4}, we present series of lemmas for the a priori estimates on the solution and finish the proof of Theorem \ref{thm.gl}  at the end.

\subsection{Notations}

Throughout this paper,  $C$  denotes
some generic positive (generally large) constant and $\ka$ denotes some generic
positive (generally small) constant, where both $C$ and
$\ka$ may take different values in different places.
$A\lesssim B$ means that  there is a generic constant $C>0$ such that $A\leqslant CB$. $A\sim B$ means $A\lesssim B$ and $B\lesssim A$.  We use $L^2$ to denote the usual Hilbert spaces $L^2=L^2_{x,\xi}$ or $L^2_x$ with the norm $\|\cdot\|$, and use
$\lag \cdot,\cdot\rag$ to denote the inner product over $L^2_{x,\xi}$ or $L^2_\xi$.
For $q\geq 1$,  the  mixed  velocity-space Lebesgue
space $Z_q=L^2_\xi(L^q_x)=L^2(\R^3_\xi;L^q(\R^3_x))$ is used.
For
multi-indices $\al=(\al_1,\al_2,\al_3)$ and
$\be=(\be_1,\be_2,\be_3)$,  $\pa^{\al}_\be=\pa_x^\al\pa_\xi^\be=\pa_{x_1}^{\al_1}\pa_{x_2}^{\al_2}\pa_{x_3}^{\al_3}
    \pa_{\xi_1}^{\be_1}\pa_{\xi_2}^{\be_2}\pa_{\xi_3}^{\be_3}.
$
The length of $\al$ is $|\al|=\al_1+\al_2+\al_3$ and similar for $|\be|$.

\section{Preliminary}\label{sec2}

\subsection{Macro structure}

Consider the following linearized Vlasov-Maxwell-Landau system with a non-homogeneous source $\sourceG=[\sourceG_+(t,x,\xi),\sourceG_-(t,x,\xi)]$:
\begin{equation}\label{lsns}
    \left\{\begin{array}{l}
  \dis     \pa_t f_\pm+\xi\cdot\na_x f_\pm \mp E\cdot \xi \mu^{1/2}+L_{\pm} f =\sourceG_\pm,\\
 \dis \pa_t E-\na_x\times B=-\int_{\R^3} \xi \mu^{1/2}(f_+-f_-)\,d\xi,\\
\dis \pa_t B+\na_x\times E =0,\\
\dis\na_x\cdot E =\int_{\R^3} \mu^{1/2} (f_+-f_-)\,d\xi,\ \ \na_x \cdot
B=0.
    \end{array}\right.
\end{equation}
Taking velocity integrations of the first equation of \eqref{lsns} with respect to the velocity moments
\begin{equation}
\notag
    \mu^{1/2},\ \  \xi_i  \mu^{1/2}, i=1,2,3,\ \  \frac{1}{6}(|\xi|^2-3)\mu^{1/2},
\end{equation}
one has
\begin{eqnarray*}
&& \pa_t a_\pm +\na_x\cdot b +\na_x \cdot \langle \xi
\mu^{1/2},\{I_\pm-P_\pm\} f\rangle=\langle\mu^{1/2},\sourceG_\pm\rangle,\\
&& \pa_t [b_i+  \langle \xi_i \mu^{1/2},\{I_\pm-P_\pm\} f\rangle
]+\pa^i (a_\pm+2c)\mp E_i
\\
&&\quad +\na_x\cdot \langle \xi\xi_i
\mu^{1/2},\{I_\pm-P_\pm\} f\rangle=\langle\xi_i  \mu^{1/2},\sourceG_\pm {-L_{\pm} f}\rangle,\\
&&\pa_t \left[c+ \frac{1}{6}\langle (|\xi|^2-3)\mu^{1/2},\{I_\pm-P_\pm\}
f\rangle
\right]+  \frac{1}{3} \na_x\cdot b
\\
&&\quad +
\frac{1}{6}\na_x\cdot \langle
 (|\xi|^2-3)\xi\mu^{1/2},\{I_\pm-P_\pm\} f\rangle
 =\frac{1}{6}\langle (|\xi|^2-3)\mu^{1/2},\sourceG_\pm  {-L_{\pm} f}\rangle,
\end{eqnarray*}
where $\pa^i=\pa_{x_i}$ denotes the spatial derivative with respect to $x_i$, and we have set $I=[I_+,I_-]$ with $I_\pm f=f_\pm$. As in \cite{DuanS1, DS-VMB}, we define the high-order moment
 functions $\highG(f_\pm)=(\highG_{ij}(f_\pm))_{3\times 3}$ and
$\highB(f_\pm)=(\highB_1(f_\pm),\highB_2(f_\pm),\highB_3(f_\pm))$ by
\begin{equation*}
  \highG_{ij}(f_\pm) = \langle(\xi_i\xi_j-1)\mu^{1/2}, f_\pm\rangle,\ \
  \highB_i(f_\pm)=\frac{1}{10}\langle(|\xi|^2-5)\xi_i\mu^{1/2},
  f_\pm\rangle.
\end{equation*}
Further taking velocity integrations of the first equation of \eqref{lsns} with respect to the above high-order
moments one has
\begin{eqnarray*}
&&\pa_t [\highG_{ii}( \{I_\pm-P_\pm\}f)+2c]+2\pa^i b_i
=\highG_{ii}(r_{\pm}+\sourceG_\pm),\notag\\
&&\pa_t \highG_{ij}( \{I_\pm-P_\pm\}f) +\pa^j b_i+\pa^i b_j +\na_x\cdot
\langle \xi\mu^{1/2},\{I_\pm-P_\pm\} f\rangle
\notag
\\
&&\hspace{4cm}=\highG_{ij}(r_{\pm}+\sourceG_\pm)+\langle\mu^{1/2},\sourceG_\pm\rangle,\ \ i\neq j,\\
&& \pa_t \highB_i( \{I_\pm-P_\pm\}f)+\pa^i c=\highB_i(r_{\pm}+\sourceG_\pm),
\end{eqnarray*}
where
$r_{\pm} =-\xi \cdot \na_x \{I_\pm-P_\pm\}f-L_{\pm} f$.

In particular, for the nonlinear system  \eqref{VM},  the non-homogeneous source $\sourceG=[\sourceG_+(t,x,\xi),\sourceG_-(t,x,\xi)]$ takes the form of
\begin{equation}
\notag
    \sourceG_\pm=\pm\frac{1}{2}E\cdot \xi f_\pm \mp (E+\xi\times B)\cdot \na_\xi f_\pm +\Ga_\pm(f,f).
\end{equation}
Then, it is straightforward to compute from integration by parts that
\begin{eqnarray*}
 \langle \mu^{1/2},\sourceG_\pm\rangle &=& 0,\\
   \langle \xi\mu^{1/2},\sourceG_\pm\rangle &=&\pm Ea_\pm \pm b\times B  {\pm} \langle\xi\mu^{1/2}, \{I_\pm-P_\pm\} f\rangle \times B\\
   && {+\langle \xi\mu^{1/2},\Ga_\pm (f,f)\rangle},\\
    \frac{1}{6}\langle (|\xi|^2-3)\mu^{1/2},\sourceG_\pm\rangle &=&\pm \frac{1}{3} b\cdot E \pm \frac{1}{3}
     \langle\xi\mu^{1/2}, \{I_\pm-P_\pm\} f\rangle\cdot E\\
     && {+\langle \frac{1}{6}(|\xi|^2-3)\mu^{1/2},\Ga_\pm (f,f)\rangle}.
\end{eqnarray*}

\subsection{Basic estimates on $L$ and $\Ga$}

In this section, we state two lemmas about some basic properties
of the
Landau operator.
Given a vector-valued function $u=(u_1,u_2,u_3)$, define
\begin{equation*}
   P_\xi u=\frac{\xi \otimes \xi }{|\xi|^2}u=\left\{\frac{\xi}{|\xi|}\cdot u \right\}\frac{\xi}{|\xi|} ,\quad i.e.,\ ({P}_\xi u)_i= \left\{\sum_{j=1}^3 \frac{\xi_j}{|\xi|}u_j\right\}\frac{\xi_i}{|\xi|},\ 1\leq i\leq 3.
\end{equation*}
Concerning the equivalent characterization of the dissipation rate and the
dissipative property of the linearized Landau operator, one has the following lemma; see \cite{DL, Mo,Guo-L} for the detailed proof.

\begin{lemma}[\cite{Guo-L}]\label{lem.Ld}
It holds that
\begin{multline}
\notag
    |f|_{\FD,\tau,\la}^2 \sim \left|(1+|\xi|)^{\frac{\ga}{2}} {P}_\xi \na_\xi f\right|_{\tau,\la}^2+\left|(1+|\xi|)^{\frac{\ga+2}{2}}\{{I}-{P}_\xi\}\na_\xi f\right|_{\tau,\la}^2\\
    +\left|(1+|\xi|)^{\frac{\ga+2}{2}} f\right|_{\tau,\la}^2.
\end{multline}
Moreover, $L$ is a nonnegative definite self-adjoint operator, and 
there exists $\kappa>0$ such that
\begin{equation*}
    \lag L f, f\rag \geq \kappa \left|\{I-P\} f\right|_\FD^2.
\end{equation*}

\end{lemma}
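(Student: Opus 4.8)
\textbf{Proof proposal for Lemma \ref{lem.Ld}.}

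The plan is to treat the two assertions separately, since the equivalent characterization of $|f|_{\FD,\tau,\la}^2$ is a pointwise-in-$x$ statement about the quadratic form of the matrix $\si^{ij}(\xi)$, while the coercivity of $L$ is a spectral fact that follows once one understands the null space $\CN$ and the structure of $\si^{ij}$. For the first part, I would start from the explicit formula $\si^{ij}=\phi^{ij}\ast\mu$ and recall the classical asymptotic behavior of this matrix: writing the decomposition of any vector $u$ into its projection $P_\xi u$ along $\xi/|\xi|$ and the orthogonal complement $\{I-P_\xi\}u$, the matrix $\si^{ij}$ acts like a multiple of $(1+|\xi|)^{\ga}$ on the $P_\xi$ direction and like $(1+|\xi|)^{\ga+2}$ on the orthogonal complement, uniformly for all $\xi\in\R^3$; analogous two-sided bounds hold for $\si^{ij}\xi_i\xi_j/4$. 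These are standard and are exactly the estimates proved in \cite{DL, Mo, Guo-L}, so I would simply cite them and insert the weight $w_{\tau,\la}^2(t,\xi)$, which is a scalar factor independent of the matrix structure and hence passes through the inequalities unchanged. Plugging these pointwise bounds into the definition
\[
|f(x)|_{\FD,\tau,\la}^2=\sum_{i,j}\int_{\R^3}w_{\tau,\la}^2\Bigl\{\si^{ij}\pa_i f\,\pa_j f+\si^{ij}\tfrac{\xi_i}{2}\tfrac{\xi_j}{2}|f|^2\Bigr\}\,d\xi
\]
and decomposing $\na_\xi f=P_\xi\na_\xi f+\{I-P_\xi\}\na_\xi f$ in the first term immediately yields the claimed two-sided equivalence.

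For the coercivity estimate, I would use the standard strategy for linearized collision operators. Since $L$ is nonnegative and self-adjoint with $\ker L=\CN$, and since $\lag Lf,f\rag=\lag L\{I-P\}f,\{I-P\}f\rag$, it suffices to prove a lower bound of the form $\lag Lg,g\rag\gtrsim |g|_\FD^2$ for $g=\{I-P\}f$. One writes $L=-\mathbf{A}-\mathbf{K}$, where $\mathbf{A}$ is the leading-order part (essentially $-\na_\xi\cdot(\si\na_\xi\cdot)+\ldots$ built from $\si^{ij}$) and $\mathbf{K}$ is a relatively compact lower-order piece. The part $\lag \mathbf{A}g,g\rag$ is, after integration by parts, exactly comparable to $|g|_\FD^2$ by the first assertion of the lemma. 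The remaining work is to absorb $\lag\mathbf{K}g,g\rag$: one shows $\mathbf{K}$ is a compact operator relative to the $|\cdot|_\FD$ norm, so that for any $\eta>0$ one has $|\lag\mathbf{K}g,g\rag|\le \eta|g|_\FD^2+C_\eta|g|_{L^2(B_R)}^2$ on a large ball; then a contradiction/compactness argument (Weyl-type, using that $Lg=0$ forces $g\in\CN$, whence $g=0$ when $g\perp\CN$) upgrades this to the strict positivity $\lag Lg,g\rag\ge\kappa|g|_\FD^2$.

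The main obstacle — and the reason the paper defers to \cite{DL, Mo, Guo-L} — is the compactness/contradiction argument for $\mathbf{K}$ in the soft-potential regime $-3\le\ga<-2$, where the $|\cdot|_\FD$ norm is degenerate at large velocities and the kernel of $\mathbf{K}$ has a genuine Coulomb-type singularity at $\ga=-3$; making the relative compactness rigorous requires careful control of both the large-$|\xi|$ tails and the local singularity, rather than a soft-routine estimate. Since this is precisely the content of the cited references, I would present Lemma \ref{lem.Ld} as a recollection: state the pointwise matrix bounds on $\si^{ij}$, note that the weight $w_{\tau,\la}$ factors out, derive the equivalence of dissipation norms in one line, and cite \cite{DL, Mo, Guo-L} for the coercivity of $L$. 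I would also remark that the two-sided bound is stated without the weight (i.e.\ $|f|_\FD^2$) in the coercivity inequality only because $Pf$ involves no weight; the weighted version is never needed for that estimate.
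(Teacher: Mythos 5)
Your proposal is correct and matches the paper's treatment: the paper gives no proof of Lemma \ref{lem.Ld} and simply defers to \cite{DL, Mo, Guo-L}, and your sketch (the spectral behavior of $\si^{ij}$ on the $P_\xi$ and $\{I-P_\xi\}$ directions for the norm equivalence, plus the compactness/contradiction argument for the coercivity of $L$) is an accurate summary of the standard arguments in those references. No gaps to report.
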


The following lemma states the weighted estimate on $L f$ and $\Ga(f,f)$.

\begin{lemma}[\cite{SG}]\label{lem.LGa}
There are  $\kappa>0$ and  $C>0$  such that
\begin{equation}
\notag
    \left\lag L f, w_{\tau,\la}^2(t,\xi) f\right\rag \geq \kappa |f|_{\FD,\tau,\la}^2-C\left|\chi_{\{|\xi|\leq 2C\}} f\right|_\tau^2.
\end{equation}
Moreover, let $|\be|>0$, $\tau=|\be|-\ell$ with $\ell\geq 0$. Then, for $\eta>0$ small enough, there is $C_\eta>0$ such that
\begin{multline}
\notag
    \left\lag \pa_\be L f, w_{\tau,\la}^2(t,\xi) \pa_\be f\right\rag \geq \kappa |\pa_\be f|_{\FD,\tau,\la}^2-\eta \sum_{|\be'|=|\be|} |\pa_{\be'} f|_{\FD,\tau,\la}^2\\
    -C_\eta \sum_{|\be'|<|\be|} |\pa_{\be'} f|_{\FD,|\be'|-\ell,\la}.
\end{multline}
And also, for $N\geq 8$, $|\al|+|\be|\leq N$ and $\tau=|\be|-\ell$ with $\ell\geq 0$, it holds that
\begin{multline}
\label{est.ga}
\dis \left\lag  \pa_\be^\al \Ga(f_1,f_2),w_{\tau,\la}^2(t,\xi)\pa_\be^\al f_3\right\rag\\
\dis \lesssim \sum_{\substack{|\al'|+|\be'|\leq N  \\
\be''\leq \be'\leq \be }}\left\{\left|\pa_{\be''}^{\al'}f_1\right|_{\tau}\left|\pa_{\be-\be'}^{\al-\al'}f_2\right|_{\FD,\tau,\la} \right.\\
\dis\left.+\left|\pa_{\be''}^{\al'} f_1\right|_{\FD,\tau}\left|\pa_{\be-\be'}^{\al-\al'}f_2\right|_{\tau,\la}\right\}\left|\pa_\be^\al f_3\right|_{\FD,\tau,\la}.
\end{multline}

\end{lemma}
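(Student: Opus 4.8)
The plan is to exploit the standard splitting $L=-A-K$ of the linearized Landau operator into its diffusive part $A$ and a compact remainder $K$, and to propagate the weight $w_{\tau,\la}$ through this structure. For the zeroth-order coercivity I would write $\lag Lf, w_{\tau,\la}^2 f\rag = \lag -Af, w_{\tau,\la}^2 f\rag + \lag -Kf, w_{\tau,\la}^2 f\rag$. The diffusive piece is a second-order elliptic operator in $\xi$ with coefficients $\si^{ij}=\phi^{ij}*\mu$, so integrating by parts in $\xi$ reproduces the quadratic form $\int w_{\tau,\la}^2\{\si^{ij}\pa_i f\pa_j f + \si^{ij}\frac{\xi_i}{2}\frac{\xi_j}{2}|f|^2\}\,d\xi = |f|_{\FD,\tau,\la}^2$, up to terms in which a $\xi$-derivative falls on $w_{\tau,\la}$. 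Since $w_{\tau,\la}$ is radial in $\xi$, these commutators carry the factor $\si^{ij}\xi_j$; invoking the equivalent characterization of $|\cdot|_{\FD,\tau,\la}$ in Lemma \ref{lem.Ld}, which separates the radial $P_\xi$ direction (where $\si^{ij}$ decays one power faster) from the spherical $\{I-P_\xi\}$ direction, one absorbs them into $\kappa|f|_{\FD,\tau,\la}^2$ plus a low-velocity remainder. For the compact part, the kernel of $K$ decays Gaussianly in $\xi$; because $0<\la\le 1/4$ and $(1+t)^{-\vth}\le 1$ force $w_{\tau,\la}^2\mu\lesssim \lag\xi\rag^{2(\ga+2)\tau}e^{-|\xi|^2/4}$ to be integrable, the large-velocity contribution of $\lag Kf, w_{\tau,\la}^2 f\rag$ is negligible and only $|\xi|\le 2C$ survives, yielding $-C|\chi_{\{|\xi|\le 2C\}}f|_\tau^2$.

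For the weighted estimate with $|\be|>0$, I would commute $\pa_\be$ through $L$ via Leibniz, writing $\pa_\be(Lf) = L(\pa_\be f) + [\pa_\be, L]f$. Applying the zeroth-order bound to $\pa_\be f$ gives the leading term $\kappa|\pa_\be f|_{\FD,\tau,\la}^2$. The commutator $[\pa_\be, L]f$ is a sum in which derivatives land on the coefficients $\si^{ij}$ and on the Gaussian factors inside $K$, so it contains only terms $\pa_{\be'}f$ with $|\be'|\le|\be|$. The crucial point is the weight-derivative coupling $\tau=|\be|-\ell$: each $\xi$-derivative shifts the algebraic weight power by $\ga+2$, so that the same-order terms $|\be'|=|\be|$ appear against exactly the weight $w_{\tau,\la}$ and are rendered arbitrarily small by Cauchy--Schwarz with a parameter $\eta$, producing $-\eta\sum_{|\be'|=|\be|}|\pa_{\be'}f|_{\FD,\tau,\la}^2$, while the genuinely lower-order terms $|\be'|<|\be|$ carry weight $w_{|\be'|-\ell,\la}$ and are collected into $C_\eta\sum_{|\be'|<|\be|}|\pa_{\be'}f|_{\FD,|\be'|-\ell,\la}$.

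The trilinear bound on $\Ga$ is the most delicate. First I expand $\Ga(f_1,f_2)=\mu^{-1/2}Q(\mu^{1/2}f_1,\mu^{1/2}f_2)+(\text{cross term})$ into its explicit divergence form, exhibiting it as a bilinear operator whose top-order part is $\pa_i[\{\phi^{ij}*(\mu^{1/2}f_1)\}\pa_j f_2]$ together with lower-order convolution terms such as $\{\phi^{ij}*(\xi_i\mu^{1/2}f_1)\}\pa_j f_2$. Applying $\pa_\be^\al$ and using Leibniz distributes derivatives between the convolution factor $f_1$ and the transported factor $f_2$, which accounts for the summation over $\al',\be'',\be'$. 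Pairing against $w_{\tau,\la}^2\pa_\be^\al f_3$ and integrating by parts in $\xi$ to move the outer divergence onto the test factor symmetrizes the top-order contribution into a product of two $|\cdot|_{\FD,\tau,\la}$ factors in $f_2$ and $f_3$. The convolution operators $\phi^{ij}*(\mu^{1/2}\pa_{\be''}^{\al'}f_1)$ and their derivatives are controlled by sharp pointwise-in-$\xi$ bounds exploiting the Gaussian $\mu^{1/2}$ and the homogeneity of $\phi$; this is what permits the $f_1$-factor to be measured in the purely algebraic weights $|\cdot|_\tau$ and $|\cdot|_{\FD,\tau}$ with no exponential factor. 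Finally, Sobolev embedding $H^2_x\hookrightarrow L^\infty_x$ places the lower-derivative factor in $L^\infty_x$ and the higher one in $L^2_x$, which is the role of the hypothesis $N\ge 8$.

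The main obstacle is precisely the weight bookkeeping in the trilinear estimate under the singular soft kernel $\phi^{ij}(\xi)=C_\phi|\xi|^{\ga+2}\{\de_{ij}-\xi_i\xi_j/|\xi|^2\}$ with $\ga+2\in[-1,0)$. One must produce convolution bounds that reproduce exactly the dissipation weight $\lag\xi\rag^{\ga+2}$ and, simultaneously, guarantee that the time-velocity exponential factor $\exp\{\la\lag\xi\rag^2/(1+t)^\vth\}$ is never assigned to the background factor $f_1$ entering through $\mu^{1/2}$; otherwise the Gaussian in the convolution could be overwhelmed and the pointwise bound would break. Arranging the integration by parts and the Cauchy--Schwarz splitting so that the three factors carry exactly the weights $(\tau)$, $(\FD,\tau,\la)$ or $(\tau,\la)$, and $(\FD,\tau,\la)$, all while respecting the derivative-dependent shift $\tau=|\be|-\ell$, is the analytic heart of the argument.
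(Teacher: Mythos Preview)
Your proposal is correct and follows exactly the approach the paper intends: the paper does not give a self-contained proof but simply remarks that, since the coefficient $\la/(1+t)^{\vth}$ in the exponential part of $w_{\tau,\la}(t,\xi)$ is bounded uniformly in $t\ge 0$, the argument of Strain--Guo \cite{SG} applies verbatim. Your sketch of the $L=-A-K$ splitting, the integration-by-parts/commutator analysis for the coercivity estimates, and the Leibniz expansion with convolution bounds for $\Ga$ is precisely that Strain--Guo argument, and you correctly isolate the one new observation needed here, namely that $w_{\tau,\la}^2\mu$ remains integrable because the exponential factor is uniformly dominated.
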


 Notice that since the coefficient $\la/(1+t)^{\vth}$ in front of $\lag \xi\rag^2$ in the exponential part of the weight function $w_{\tau,\la}(t,\xi)$ is bounded uniformly in  $t\geq 0$, the proof of the above  lemma follows directly from the same argument used in \cite{SG}.

\section{Linearized analysis}\label{sec3}

Consider the Cauchy problem on the linearized Vlasov-Maxwell-Landau system with a
 source $\sourceG=\sourceG(t,x,\xi) = [\sourceG_+(t,x,\xi),
\sourceG_-(t,x,\xi)]$:
\begin{equation}\label{ls}
    \left\{\begin{array}{l}
  \dis     \pa_t f+\xi\cdot\na_x f - E\cdot \xi \mu^{1/2}q_1+L f =\sourceG,\\
 \dis \pa_t E-\na_x\times B=-\langle \xi \mu^{1/2}, f_+-f_-\rangle,\\
\dis \pa_t B+\na_x\times E =0,\\
\dis\na_x\cdot E =\langle\mu^{1/2}, f_+-f_-\rangle,\ \ \na_x \cdot
B=0,\\
\dis (f,E,B)|_{t=0}=(f_0,E_0,B_0),
    \end{array}\right.
\end{equation}
where initial data $[f_0,E_0,B_0]$ satisfies the
compatibility condition
\begin{equation}\label{comp.con}
\na_x\cdot E_0=\int_{\R^3}\mu^{1/2}(f_{0,+}-f_{0,-})\,d\xi,\ \ \na_x\cdot
B_0=0,
\end{equation}
and the source term $\sourceG$ is assumed to satisfy
\begin{equation}
\notag
\int_{\R^3}\mu^{1/2}(\sourceG_{+}-\sourceG_{-})\,d\xi=0.
\end{equation}
To consider the solution to the Cauchy problem \eqref{ls}, for simplicity, we denote $U=[f,E,B]$, $U_0=[f_0,E_0,B_0]$ so that one can formally write
\begin{equation}
\notag
U(t)=\semiG(t)U_0+\int_0^t\semiG(t-s)[\sourceG(s),0,0]\,ds,
\end{equation}
where $\semiG(t)$ is the linear solution operator for the Cauchy problem
on the linearized homogeneous system corresponding to  \eqref{ls} in the case when $\sourceG=0$.


For the linearized homogeneous system, we have the following result. 

\begin{theorem}\label{thm.lb}
Let $S=0$, and let  $[f,E,B]$ be the solution to
the Cauchy problem \eqref{ls}, \eqref{comp.con} of the linearized
homogeneous system. Define the velocity weight function $w=w(\xi)$ by
\begin{equation}\label{def.wli}
w(\xi)=\lag \xi\rag^{-\frac{\ga+2}{2}} 
\end{equation}
Then, for $\ell\geq 0$ and $\al\geq 0$ with $m=|\al|$,
\begin{eqnarray}
&& \|w^\ell \pa^\al f\|+\|\pa^\al (E,B)\|\notag\\
&&\lesssim (1+t)^{- \si_m} (\|w^{\ell+\ell_\ast^{low}} f_0\|_{Z_1}+\|(E_0,B_0)\|_{L^1_x})\notag\\
&&\qquad + (1+t)^{-j} (\|w^{\ell+\ell_{\ast}^{high}}\na_x^{{j+1}} \pa^\al f_0\|+\|\na_x^{{j+1}} \pa^\al (E_0,B_0)\|),\label{thm.lb.1}
\end{eqnarray}
where
\begin{equation}
\notag
\si_m=\frac{3}{4} +\frac{m}{2},\ \ell_\ast^{low}>2\si_m,\
\ell_{\ast}^{high}>0,\ 0\leq j<\ell_{\ast}^{high}.
\end{equation}
\end{theorem}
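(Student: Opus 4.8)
The plan is to prove Theorem~\ref{thm.lb} by combining the classical energy method with a Fourier-space analysis of the linearized homogeneous system, in the spirit of the time-decay estimates for kinetic-fluid coupled systems. First I would take the Fourier transform in $x$ of the homogeneous system \eqref{ls} with $S=0$, writing $\hat U(t,k,\xi)=[\hat f,\hat E,\hat B](t,k,\xi)$, and establish a pointwise-in-$k$ Lyapunov-type inequality of the form
\begin{equation}
\notag
\frac{d}{dt}\mathcal{E}(t,k)+\kappa\,\frac{|k|^2}{1+|k|^2}\,\mathcal{E}(t,k)\leq 0,
\end{equation}
where $\mathcal{E}(t,k)\sim |\hat f(t,k)|_{L^2_\xi}^2+|\hat E(t,k)|^2+|\hat B(t,k)|^2$ is a carefully constructed functional. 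The construction of $\mathcal{E}(t,k)$ is the crux: to the natural energy $|\hat f|^2+|\hat E|^2+|\hat B|^2$, whose time derivative only produces the microscopic dissipation $-\kappa|\{I-P\}\hat f|_{\FD}^2$ via Lemma~\ref{lem.Ld}, one must add lower-order interaction terms (built from the macroscopic moments $\hat a_\pm,\hat b,\hat c$, the high-order moments $\Theta,\Lambda$ introduced in Section~\ref{sec2}, and cross terms with $\hat E$ and $\hat B$, weighted by suitable powers of $|k|/(1+|k|^2)^{1/2}$) so that the macroscopic dissipation of $\hat a_\pm,\hat b,\hat c$, the electric field dissipation, and the magnetic field dissipation are all recovered. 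The factor $|k|^2/(1+|k|^2)$ rather than $|k|^2$ is forced by the regularity-loss phenomenon tied to the hyperbolic Maxwell part — this is exactly the degenerate dissipative structure emphasized in the introduction, and getting the right weights so that all cross terms close is the main obstacle.

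Granting the pointwise estimate, the second step is to integrate it: $\mathcal{E}(t,k)\lesssim e^{-\kappa t|k|^2/(1+|k|^2)}\mathcal{E}(0,k)$, hence
\begin{equation}
\notag
|\hat f(t,k)|_{L^2_\xi}^2+|\hat E(t,k)|^2+|\hat B(t,k)|^2\lesssim e^{-\kappa t|k|^2/(1+|k|^2)}\big(|\hat f_0(k)|_{L^2_\xi}^2+|\hat E_0(k)|^2+|\hat B_0(k)|^2\big).
\end{equation}
Then I split the frequency domain into low frequencies $|k|\leq 1$ and high frequencies $|k|\geq 1$. On $|k|\leq 1$ the exponent behaves like $e^{-\kappa t|k|^2}$; bounding $|\hat f_0(k)|,|\hat E_0(k)|,|\hat B_0(k)|$ by the $Z_1$ and $L^1_x$ norms of the data (Hausdorff–Young) and computing $\int_{|k|\leq 1} e^{-\kappa t|k|^2}|k|^{2m}\,dk\lesssim (1+t)^{-3/2-m}$ yields the first term on the right of \eqref{thm.lb.1} with rate $\sigma_m=\tfrac34+\tfrac m2$ (the $Z_1$ framework, integrating in $\xi$ then taking $L^1_x$, handles the velocity variable). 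On $|k|\geq 1$ the exponent is $e^{-\kappa t}$, but to convert this uniform decay into any algebraic rate $j$ against the high-order data norm one borrows $j$ powers of $|k|$: $e^{-\kappa t}\leq C_j(1+t)^{-j}|k|^{-2j}$ for $|k|\geq1$... wait, more precisely one writes $\sup_{|k|\geq1}e^{-\kappa t|k|^2/(1+|k|^2)}\leq \sup_{|k|\geq 1} e^{-\kappa' t}\le C_j (1+t)^{-j}$ directly, absorbing the extra $\na_x^{j+1}$ derivatives present in the data norm — this is the origin of the regularity-loss term, the second term in \eqref{thm.lb.1}.

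Two points need extra care. First, the velocity weight $w^\ell$ with $w=\lag\xi\rag^{-(\ga+2)/2}$: since $\ga+2<0$, this is a \emph{growing} weight in $\xi$, so the pointwise-in-$k$ energy argument must be run in the weighted space $|\hat f|_{\ell,0}^2=\int w^{2\ell}|\hat f|^2 d\xi$ from the start, invoking the weighted coercivity of $L$ from Lemma~\ref{lem.LGa} (first inequality, with the exponential factor switched off, $\la=0$); this is why the data norms on the right carry the shifted weights $w^{\ell+\ell_\ast^{low}}$ and $w^{\ell+\ell_\ast^{high}}$ — the extra $\ell_\ast$ absorbs the compactly-supported error term $C|\chi_{\{|\xi|\leq 2C\}}\hat f|_\tau^2$ and, in the low-frequency estimate, provides the room for the $Z_1$ moment bound (one needs $\ell_\ast^{low}>2\sigma_m$ so that $\int_{|k|\le1}e^{-\kappa t|k|^2}$-type integrals against the weight converge). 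Second, the macroscopic part $P\hat f$ near $k=0$ does not decay from $L$ alone; its dissipation comes entirely from the interaction terms in $\mathcal{E}(t,k)$, and verifying that the full $8\times 8$ (or so) macroscopic ODE system for $(\hat a_\pm,\hat b,\hat c,\hat E,\hat B)$ coupled to the microscopic remainder is genuinely dissipative with the claimed $|k|^2/(1+|k|^2)$ rate — essentially a Kawashima-type condition — is the real content and the step I expect to be most delicate; the rest is bookkeeping with weights and frequency splitting.
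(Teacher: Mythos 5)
Your overall architecture (Fourier transform in $x$, a pointwise-in-$k$ Lyapunov functional built from interactive macroscopic/field terms, then a low/high frequency splitting) matches the paper's. But there are two genuine gaps, both at the point where you pass from the Lyapunov inequality to decay. First, the dissipation of the magnetic field is \emph{not} $\kappa\,|k|^2/(1+|k|^2)\,|\hat B|^2$ but only $\kappa\,|k|^2/(1+|k|^2)^2\,|\hat B|^2$ (see \eqref{thm.lb.p1}); the effective rate function is $\rho(k)=|k|^2/(1+|k|^2)^2$, which tends to $0$ like $|k|^{-2}$ as $|k|\to\infty$. Your claimed bound $\mathcal{E}(t,k)\lesssim e^{-\kappa t|k|^2/(1+|k|^2)}\mathcal{E}(0,k)$ would give uniform exponential decay at high frequency, in which case no derivatives would need to be borrowed and there would be no regularity loss at all --- contradicting the very structure of \eqref{thm.lb.1}. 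The correct high-frequency mechanism is $[1+\eps\rho(k)t]^{-J}\lesssim (1+t)^{-j}|k|^{2j}$ for $|k|\ge 1$ and $j\le J$, and it is this $|k|^{2j}$ (plus one more power of $|k|^2$ from comparing $\widetilde M_\ell$ with $M_\ell$) that produces the $\na_x^{j+1}$ loss. Your ``wait, more precisely...'' sentence papers over exactly the step that fails.

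Second, for soft potentials the Gronwall argument giving exponential decay does not close: the weighted dissipation only satisfies $|w^{\ell}\{I-P\}\hat f|_{\FD}\gtrsim |w^{\ell-1}\{I-P\}\hat f|_{L^2}$, i.e.\ $D_\ell(t,k)$ controls $M_{\ell-1}(t,k)$ but \emph{not} $M_\ell(t,k)$, because $(1+|\xi|)^{(\ga+2)/2}$ is a decaying factor when $\ga+2<0$. The paper therefore cannot (and does not) write $\pa_t M_\ell+\kappa\rho(k)M_\ell\le 0$; it uses the $[1+\eps\rho(k)t]^J$-weighted estimate and the iteration of \cite{DYZ-s}, trading $J+p-1$ powers of the velocity weight for the algebraic decay $[1+\eps\rho(k)t]^{-J}$, as in \eqref{ELI}. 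This trade is precisely the origin of the hypotheses $\ell_\ast^{low}>2\si_m$ (one needs $J>m+3/2$ for the low-frequency $k$-integral to converge, and the weight loss is $J+p-1$) and $\ell_\ast^{high}>j$; your explanation that the extra weight ``absorbs the compactly supported error term'' or makes ``$k$-integrals converge'' misattributes the role of these parameters, which concern the velocity variable, not the frequency variable. To repair the proof you must replace the exponential bound by the algebraic one with weight loss, and track the degenerate rate $\rho(k)$ at high frequency.
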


\begin{proof} It is divided by the following three steps. For brevity of presentation we would only sketch the proof  by clarifying how the known techniques in \cite{DYZ-s, DS-VMB, St-Op} can be adopted in the situation considered here.  

\medskip
\noindent{\bf Step 1.} We claim that there is a time-frequency interactive functional $\CE^{\rm int}(t,k)$ such that
\begin{equation}
\label{thm.lb.p0}
|\CE^{\rm int}(t,k)|\lesssim |\hat{f}|_{L^2}^2 +|[\hat{E},\hat{B}]|^2,
\end{equation}
and
\begin{multline}
\label{thm.lb.p1} 
\dis \pa_t\{|\hat{f}|_{L^2}^2
+|[\hat{E},\hat{B}]|^2 +\ka_0\, \Re\,\CE^{\rm int}(t,k)\}
+{\ka |\{I-P\}\hat{f}|_{{\bf D}}^2} \\
\dis+ \frac{\ka |k|^2}{1+|k|^2} (|\widehat{a_++a_-}|^2+|\hat{b}|^2+|\hat{c}|^2)+\ka |\widehat{a_+-a_-}|^2\\
\dis +\frac{\ka}{1+|k|^2}|\hat{E}|^2 + \frac{\ka |k|^2}{(1+|k|^2)^2}|\hat{B}|^2\leq 0,
\end{multline}
where $\ka_0>0$ is a small constant such that
\begin{equation}
\label{thm.lb.p1-0}  
|\hat{f}|_{L^2}^2 +|[\hat{E},\hat{B}]|^2 +\ka_0\,\Re\, 
\CE^{\rm int}(t,k)\sim |\hat{f}|_{L^2}^2 +|[\hat{E},\hat{B}]|^2.
\end{equation}
Compared to the corresponding estimate obtained in \cite{DS-VMB}, the main improvement in \eqref{thm.lb.p1} occurs to the coefficient in front of the term $|\hat{E}|^2$ in the dissipation rate.

\medskip

\noindent{\bf Step 2.} In this step, we follow the approach in \cite{St-Op} to carry out the velocity weighted energy estimates for the pointwise frequency variable. Starting from the micro equation
\begin{equation*}
\begin{split}
\partial_t\{I-P\}\hat{f}&+i\xi\cdot k\{I-P\}\hat{f}+ L\{I-P\}\hat{f}\\ =&-\{I-P\}(\widehat{E}\cdot \xi)\sqrt{\mu}q_1-\{I-P\}\left[i\xi\cdot k{P}\hat{f}\right]+{P}\left[i\xi\cdot k\{I-P\}\hat{f}\right],
\end{split}
\end{equation*}
one can verity  that for $\ell\geq 0$,
\begin{multline}\label{ft microzero energy2}
\partial_t\left|w^{\ell}\{I-P\}\hat{f}\right|_{L^2}^2\chi_{|k|\leq 1}
+\ka{\left|w^{\ell}\{I-P\}\hat{f}\right|_{\FD}^2}\chi_{|k|\leq 1}\\
\lesssim \frac{1}{1+|k|^2}|\widehat{E}|^2+\frac{|k|^2}{1+|k|^2}|(\widehat{a_++a_-},\hat{b},\hat{c})|^2+|\widehat{a_+-a_-}|^2\\
+\left|w^{-1}\{I-P\}\hat{f}\right|_{L^2}^2.
\end{multline}
In a similar way, starting with the first equation of (\ref{ls}),
the direct velocity weighted energy estimates for the pointwise frequency variable  also gives
 that for $\ell\geq 0$,
\begin{multline}\label{ft spatialzero energy2}
\frac{1}{1+|k|^2}\partial_t\left|w^\ell\hat{f}\right|_{L^{2}}^2\chi_{|k|\geq 1}
+\frac{\ka}{1+|k|^2}{\left|w^{\ell}\{I-P\}\hat{f}\right|_{\FD}^2}\chi_{|k|\geq 1}\\
\lesssim \frac{1}{1+|k|^2}|\widehat{E}|^2+\frac{|k|^2}{1+|k|^2}|(\widehat{a_++a_-},\hat{b},\hat{c})|^2+|\widehat{a_+-a_-}|^2\\
+\left|w^{-1}\{I-P\}\hat{f}\right|_{L^2}^2.
\end{multline}
Now, by choosing $\kappa_1,\ka_2>0$ properly small, the linear combination $\eqref{thm.lb.p1}+\ka_2\times \eqref{ft microzero energy2}+\kappa_1\times\eqref{ft spatialzero energy2}$ yields that for $\ell\geq 0$,
\begin{equation}\label{ad.p1}
\pa_t M_\ell(t,k) +\ka D_\ell(t,k)\leq 0,
\end{equation}
where $M_\ell(t,k)$ and $D_\ell(t,k)$ are given by
\begin{eqnarray*}
   M_\ell(t,k)&=&\|\hat{f}\|_{L^2}^2+|[\hat{E},\hat{B}]|^2+\ka_0\,\Re\, \CE^{\rm int}(t,k)\\
   && +\kappa_2\left|w^\ell\{I-P\}\hat{f}\right|_{L^2}\,\chi_{|k|\leq 1}
    + \frac{\kappa_1}{1+|k|^2}\left|w^\ell\hat{f}\right|_{L^2}^2\chi_{|k|\geq 1},\nonumber\\
   D_\ell(t,k)&=&|\{I-P\}\hat{f}|_{{\bf D}}^2
   + \frac{1}{1+|k|^2}{\left|w^{\ell}\{I-P\}\hat{f}\right|_{\FD}^2}\\
&&\dis+ \frac{|k|^2}{1+|k|^2} (|\widehat{a_++a_-}|^2+|\hat{b}|^2+|c|^2)+ |\widehat{a_+-a_-}|^2\\
&&\dis +\frac{1}{1+|k|^2}|\hat{E}|^2 + \frac{|k|^2}{(1+|k|^2)^2}|\hat{B}|^2.
\end{eqnarray*}
Here, notice that for any $\ell$,
\begin{equation}
\notag
{\left|w^{\ell}\{I-P\}\hat{f}\right|_{\FD}\gtrsim  \left|w^{\ell-1}\{I-P\}\hat{f}\right|_{L^2}.}
\end{equation}
Moreover, set the frequency function $
\rho(k)=|k|^2/(1+|k|^2)^2$.
Then, by considering the $[1+\eps\rho (k)t]^J$-weighted estimate on \eqref{ad.p1} and  also applying the iterative technique developed in \cite{DYZ-s}, one has that whenever $\ell\geq 0$,
\begin{equation}\label{ELI}
   M_\ell(t,k)\lesssim   [1+\eps \rho(k)t]^{-J} M_{\ell+J+p-1}(0,k),
\end{equation}
holds true for any $t\geq 0$, $k\in \R^3$, where the parameters $p$, $\eps$ and $J$ with
\begin{equation*}
p>1,\  0<\eps\leq 1, \ J>0,\  C_1\eps J \leq \frac{\ka}{4}.
\end{equation*}
are still to be chosen.

\medskip

\noindent{\bf Step 3.} For $\ell\geq 0$, define
\begin{equation}
\label{thm.lb.p8}
\widetilde{M}_\ell(t,k)=|w^\ell \hat{f}|_{L^2}^2+|[\hat{E},\hat{B}]|^2.
\end{equation}
Take $\al\geq 0$ with $m=|\al|$.
We use the splitting
\begin{equation}
\notag
\int_{{\R}^3} |k^\al|^2 \widetilde{M}_\ell(t,k)\,dk=\left(\int_{|k|\leq 1 }+\int_{|k|\geq 1} \right)|k^\al|^2 \widetilde{M}_\ell(t,k)\,dk.
\end{equation}
By the estimate (\ref{ELI}), one has
\begin{multline*}
\dis \int_{|k|\leq 1}|k^\al|^2 \widetilde{M}_\ell(t,k)\,dk\lesssim \int_{|k|\leq 1}|k^\al|^2 {M}_\ell(t,k)\,dk \\
\leq \int_{|k|\leq 1} |k|^{2m} [1+\eps \rho(k)t]^{-J} M_{\ell+J+p-1}(0,k)\,dk\\
\leq \int_{|k|\leq 1} |k|^{2m}\left[1+\frac{\eps}{4}|k|^2 t\right]^{-J} dk\ \sup_{k\in\R^3} M_{\ell+J+p-1} (0,k).
\end{multline*}
By letting
$
2J-2m>3$, i.e. $J>m+\frac{3}{2}=2\si_m$,
we arrive at 
\begin{equation}
\notag
 \int_{|k|\leq 1}|k^\al|^2 \widetilde{M}_\ell (t,k)\, dk
 \lesssim (1+t)^{- \left(\frac{3}{2}+m\right)}\left(\left\|w^{\ell+J+p-1} f_0\right\|_{Z_1}^2 +\|(E_0,B_0)\|_{L^1_x}^2\right).
\end{equation}
Here, notice that  for any given $\ell^{low}_\ast>2\si_m$, one can take proper constants $J>2\si_m$ and $p>1$ such that 
$
\ell^{low}_\ast=J+p-1.
$
Hence, for the low frequency part,
\begin{equation}
\label{thm.lb.p9}
\int_{|k|\leq 1}|k^\al|^2 \widetilde{M}_\ell (t,k)\, dk\\
 \lesssim (1+t)^{- 2\si_m}\left(\left\|w^{\ell+\ell^{low}_\ast} f_0\right\|_{Z_1}^2 +\|(E_0,B_0)\|_{L^1_x}^2\right),
\end{equation}
for $\ell^{low}_\ast>2\si_m$.
To estimate the high-frequency part, let $j\geq 0$ to be chosen, and one has
\begin{multline}
\notag
\int_{|k|\geq 1}|k^\al|^2  \widetilde{M}_\ell (t,k)\, dk\\
\lesssim \int_{|k|\geq 1} |k^\al|^2 (1+|k|^2)M_{\ell}(t,k)\,dk
\lesssim
\int_{|k|\geq 1} |k^\al|^2 |k|^2M_{\ell}(t,k)\,dk\\
\lesssim
\int_{|k|\geq 1} |k^\al|^2 |k|^2[1+\eps \rho(k)t]^{-J} M_{\ell+J+p-1}(0,k)\,dk,
\end{multline}
which further implies
\begin{eqnarray*}
&&\int_{|k|\geq 1}|k^\al|^2  \widetilde{M}_\ell (t,k)\, dk\\
&&\lesssim \int_{|k|\geq 1}  \left[1+\frac{\eps t}{4|k|^2}\right]^{-J}  |k^\al|^2 |k|^2M_{\ell+J+p-1} (0,k)\,dk\\
&&\lesssim \int_{\R^3}|k^\al|^2 |k|^{2(j+1)} M_{\ell+J+p-1} (0,k)\,dk
\cdot  \sup_{|k|\geq 1} \left\{ \left[1+\frac{\eps t}{4|k|^2}\right]^{-J} \frac{1}{|k|^{2j}}\right\}.
\end{eqnarray*}
Notice that 
\begin{equation*}
\sup_{|k|\geq 1} \left\{ \left[1+\frac{\eps t}{4|k|^2}\right]^{-J} \frac{1}{|k|^{2j}}\right\}
\lesssim (1+t)^{-j},
\end{equation*}
as long as
$j\leq J$.
Thus, we have
\begin{multline}
\notag
 \int_{|k|\geq 1}|k^\al|^2 \widetilde{M}_\ell (t,k)\,dk\\
 \lesssim  (1+t)^{-j} \left(\left\|w^{\ell+J+p-1}\na_x^{j+1}\pa^\al f_0\right\|^2+\left\|\na_x^{j+1} \pa^\al (E_0,B_0)\right\|^2\right).
\end{multline}
Here, notice that for any given $\ell_\ast^{high}>0$ and $0\leq j<\ell_\ast^{high}$, one can choose again proper constants $J> j$ and $p>1$ such that 
$
\ell^{high}_\ast=J+p-1.
$
Hence, it follows that 
\begin{multline}
\label{thm.lb.p10}
 \int_{|k|\geq 1}|k^\al|^2 \widetilde{M}_\ell (t,k)\,dk\\
 \lesssim  (1+t)^{-j} \left(\left\|w^{\ell+\ell_\ast^{high}}\na_x^{j+1}\pa^\al f_0\right\|^2+\left\|\na_x^{j+1} \pa^\al (E_0,B_0)\right\|^2\right),
\end{multline}
for $\ell_\ast^{high}>0$ and $0\leq j<\ell_\ast^{high}$. Therefore, in terms of \eqref{thm.lb.p8}, the desired time-decay estimate \eqref{thm.lb.1} follows from the combination of \eqref{thm.lb.p9} and \eqref{thm.lb.p10}. This completes the proof of Theorem \ref{thm.lb}.
\end{proof}

\section{Global a priori estimates}\label{sec4}

In this section we will prove Theorem \ref{thm.gl}. The key point is to  deduce the uniform-in-time {\it a priori} estimates of solutions to the Vlasov-Maxwell-Landau system
\begin{equation}\label{ns}
    \left\{\begin{array}{l}
  \dis     \pa_t f+\xi\cdot\na_x f - E\cdot \xi \mu^{1/2}q_1+L f =\sourceG,\\
 \dis \pa_t E-\na_x\times B=-\langle \xi \mu^{1/2}, f_+-f_-\rangle,\\
\dis \pa_t B+\na_x\times E =0,\\
\dis\na_x\cdot E =\langle\mu^{1/2}, f_+-f_-\rangle,\ \ \na_x \cdot
B=0,
    \end{array}\right.
\end{equation}
where the nonlinear term is given by
\begin{equation}
\label{def.S}
S=\Ga(f,f)+\frac{1}{2}q_0 E\cdot \xi f -q_0 (E+\xi\times B)\cdot \na_\xi f.
\end{equation}
For that, let $(f,E,B)$ be a smooth solution to \eqref{ns} over the time interval $0\leq t\leq T$ with initial data $(f_0,E_0,B_0)$ for $0<T\leq \infty$, and further suppose that  $(f,E,B)$ satisfies
\begin{equation}
\label{apX}
X(t)\leq \de^2,
\end{equation}
where $X(t)$ is given in \eqref{def.X} and the constant $\de>0$ is sufficiently small. We point out that throughout this section, in the case when an undetermined energy functional $\CE_{N,\ell,\la}(t)$ appears on the right-hand side of inequalities, it is always understood to take exactly the right-hand expression of \eqref{def.e}.

\subsection{Macro dissipation}
Basing on the previous work \cite{DuanS1} and \cite{DS-VMB}, it is a standard process to obtain in terms of the following lemma the macro dissipation $\CD_{N,mac}(t)$ defined by
\begin{multline}
\notag
\CD_{N,mac}(t)=\sum_{|\al|\leq N-1} \|\na_x \pa^\al (a_\pm, b,c)\|^2
+ \|a_+-a_-\|^2\\
+ \|E\|_{H^{N-1}}^2 +\|\na_x B\|_{H^{N-2}}^2.
\end{multline}
The details for the proof of this lemma are omitted for brevity. 

\begin{lemma}\label{lem.mad}
For any integer $N$ with $8\leq N\leq N_1$, there is an interactive energy functional $\CE_N^{int}(t)$ such that 
\begin{equation}
\notag
|\CE_N^{int}(t)|\lesssim \sum_{|\al|\leq N}(\|\pa^\al f\|^2+\|\pa^\al (E,B)\|^2)
\end{equation}
and
\begin{equation}
\notag
\frac{d}{dt} \CE_N^{int}(t)+\ka \CD_{N,mac}(t)\lesssim  \sum_{
|\al|\leq N }\|\pa^\al \{I-P\} f\|_{\FD}^2+\CE_{N}(t)\CD_{N}(t),
\end{equation}
for $0\leq t\leq T$.
\end{lemma}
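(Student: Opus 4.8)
\medskip
\noindent\textbf{Proof plan.}\quad The argument is the standard macroscopic energy method of \cite{DuanS1,DS-VMB}, tuned so that every error term falls into one of the two admissible families on the right-hand side. The input is the set of local balance laws for $(a_\pm,b,c)$ and the high-order moment equations for $\highG(\{I_\pm-P_\pm\}f)$, $\highB(\{I_\pm-P_\pm\}f)$ recorded in Section~\ref{sec2}, now carrying the nonlinear source $\sourceG$ of \eqref{def.S} on the right. Differentiating these identities by $\pa^\al$ with $|\al|\le N-1$ and exploiting the schematic structure $\pa_t a_\pm\sim-\na_x\cdot b$, $\pa_t b\sim-\na_x(a_\pm+2c)\pm E$, $\pa_t c\sim-\na_x\cdot b$, $\pa_t\highG\sim-\na_x b$, $\pa_t\highB\sim-\na_x c$ (modulo microscopic, linear-collision and nonlinear terms), one defines, with a hierarchy of small constants $1\gg\ka_1\gg\cdots\gg\ka_5>0$,
\begin{multline*}
\CE_N^{int}(t)= -\ka_1\!\!\sum_{|\al|\le N-1}\!\!\lag\na_x\pa^\al a_\pm,\pa^\al b\rag
-\ka_2\!\!\sum_{|\al|\le N-1}\!\!\lag\highG(\pa^\al\{I-P\}f),\na_x\pa^\al b\rag\\
-\ka_3\!\!\sum_{|\al|\le N-1}\!\!\lag\highB(\pa^\al\{I-P\}f),\na_x\pa^\al c\rag
-\ka_4\!\!\sum_{|\al|\le N-1}\!\!\lag\pa^\al(\text{micro difference}),\pa^\al E\rag\\
-\ka_5\!\!\sum_{|\al|\le N-2}\!\!\lag\pa^\al\na_x\times B,\pa^\al E\rag,
\end{multline*}
where the signs and the lower-order corrections appearing in Section~\ref{sec2} are suppressed. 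Differentiating in $t$ and substituting the balance laws, the first three groups produce, with a fixed positive constant, $\|\na_x\pa^\al(a_\pm,b,c)\|^2$ for $|\al|\le N-1$; the nested smallness of the $\ka_i$ absorbs, via Young's inequality, all the cross terms (of the type $\lag\na_x a,\na_x c\rag$ or $\lag\na_x\highG(\{I-P\}f),\na_x b\rag$) into earlier diagonal terms.

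For the electric field one uses the Poisson constraint $\na_x\cdot E=a_+-a_-$ together with the \emph{difference} of the $\pm$ momentum balance laws, which reads schematically $2E=\na_x(a_+-a_-)+\pa_t(\text{micro difference})+(\text{micro}+\text{nonlinear})$; hence the fourth group of $\CE_N^{int}(t)$, after transferring $\pa_t$ off the micro term and integrating by parts through $\na_x\cdot E=a_+-a_-$, recovers $\|a_+-a_-\|^2+\|E\|_{H^{N-1}}^2$ with a positive constant, the feedback $\pa_t E=\na_x\times B-\lag\xi\mu^{1/2},f_+-f_-\rag$ contributing only $\|\na_x B\|$ (absorbed below) and a purely microscopic current. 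For the magnetic field the last group of $\CE_N^{int}(t)$, using $\pa_t B=-\na_x\times E$ and $\pa_t E=\na_x\times B-(\text{micro})$, produces $\|\na_x B\|_{H^{N-2}}^2\sim\|\na_x\times B\|_{H^{N-2}}^2$ (by $\na_x\cdot B=0$), up to $\|\na_x E\|_{H^{N-2}}^2$ --- contained in the just-obtained $E$-dissipation at order $\le N-1$ --- and a microscopic current. This is precisely the point at which the hyperbolic coupling of the Maxwell equations forces the loss of one derivative for $B$: controlling $\pa^\al\na_x B$ requires $\pa^{\al+1}E$, whence $|\al|\le N-2$ and the range $H^{N-2}$ in $\CD_{N,mac}(t)$.

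Finally the error terms are classified. The microscopic contributions are all of the form $\lag g_0,\pa^{\al'}\{I-P\}f\rag$ with $|\al'|\le N$ and $g_0$ a polynomial times a Gaussian in $\xi$ --- arising from the fluxes $\lag\xi\mu^{1/2},\{I-P\}f\rag$, $\highG$, $\highB$, from the transport remainders $r_\pm$, and from the linear-collision moments, which by self-adjointness become $\lag L(\text{Gaussian polynomial}),\{I-P\}f\rag$ (the $\CN$-moments of $Lf$ vanishing). Since a Gaussian dominates the at most polynomial weight degeneracy built into $|\cdot|_{\FD}$ in Lemma~\ref{lem.Ld}, every such moment is $\lesssim|\{I-P\}f|_{\FD}$, so these terms --- and the stray small multiples of $\|\pa^\al\{I-P\}f\|_{\FD}^2$ produced along the way --- are controlled by $\sum_{|\al|\le N}\|\pa^\al\{I-P\}f\|_{\FD}^2$. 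The nonlinear contributions from $\sourceG$, after computing the moments as in Section~\ref{sec2} (e.g. $\lag\mu^{1/2},\sourceG_\pm\rag=0$ and $\lag\xi\mu^{1/2},\sourceG_\pm\rag\sim Ea_\pm+b\times B+\lag\xi\mu^{1/2},\{I-P\}f\rag\times B+\lag\xi\mu^{1/2},\Ga_\pm(f,f)\rag$), are estimated in $L^2_x$ by placing the lower-order factor in $L^\infty_x$ through Sobolev embedding (legitimate since $N\ge 8$) and the higher-order factor in $L^2_x$, invoking \eqref{est.ga} for the $\Ga$-pieces; under the a priori bound \eqref{apX} each such term is $\lesssim\CE_N(t)\CD_N(t)$. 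The bound $|\CE_N^{int}(t)|\lesssim\sum_{|\al|\le N}(\|\pa^\al f\|^2+\|\pa^\al(E,B)\|^2)$ is then immediate from Cauchy--Schwarz, since $(a_\pm,b,c)$ and the moments $\highG(\{I-P\}f),\highB(\{I-P\}f)$ are bounded by $\|f\|$ and every pairing in $\CE_N^{int}(t)$ carries at most $N$ spatial derivatives. \textbf{The main obstacle} is the bookkeeping of the constant hierarchy --- ensuring that each off-diagonal macroscopic term and each stray microscopic dissipation term is genuinely absorbed --- together with reading the Maxwell part correctly so that $B$ is dissipated at exactly one derivative below $E$.
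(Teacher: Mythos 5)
The paper gives no proof of this lemma --- it is explicitly omitted with a pointer to \cite{DuanS1,DS-VMB} --- and your reconstruction is precisely that standard interactive-functional scheme: cross pairings built from the moment identities of Section~\ref{sec2} and the Maxwell equations, diagonal dissipation recovered group by group, microscopic moments dominated by $|\{I-P\}f|_{\FD}$ via the Gaussian weights, and the one-derivative loss for $B$ traced correctly to $\pa_t B=-\na_x\times E$. So the approach is right. Two corrections to the bookkeeping you yourself flag as the main obstacle. First, your constant hierarchy runs the wrong way for the first three groups: differentiating the $a$-group produces the \emph{positive} term $\ka_1\|\na_x\cdot\pa^\al b\|^2$ (from $\pa_t a_\pm\sim-\na_x\cdot b$) and the cross term $\ka_1\lag\na_x\pa^\al a_\pm,\na_x\pa^\al c\rag$ (from $\pa_t b\sim-\na_x(a_\pm+2c)$), and these can only be absorbed by the diagonal terms generated by the $\highG$- and $\highB$-groups if those carry the \emph{larger} coefficients; so one needs $\ka_2,\ka_3\gg\ka_1$, not $\ka_1\gg\ka_2\gg\ka_3$ (the direction $\ka_4\gg\ka_5$ for the Maxwell pair is correct). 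Only the macro--macro cross terms force such constraints; all micro--macro cross terms are absorbed with a free Young parameter since the resulting multiple of $\sum_{|\al|\le N}\|\pa^\al\{I-P\}f\|_{\FD}^2$ goes to the right-hand side with an unconstrained constant. Second, when you ``substitute the balance laws'' for the time derivatives of $a,b,c$ falling on the second slot of each pairing, a naive substitution at $|\al|=N-1$ produces $N+1$ spatial derivatives of the fluid quantities; the step that makes the derivative count close is to first integrate by parts in $x$ so that the extra derivative lands on the microscopic moment ($\highG$, $\highB$, or $\lag\xi\mu^{1/2},\{I-P\}f\rag$), which \emph{is} controlled at order $N$ by the micro dissipation --- this deserves explicit mention. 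Finally, the trilinear estimate for the source moments naturally yields $\CE_N^{1/2}(t)\CD_N(t)$ rather than $\CE_N(t)\CD_N(t)$ (one factor in $L^\infty_x$ via Sobolev is only an energy, e.g.\ $\|b\|$ or $\|a_++a_-\|$ at zero order are not in $\CD_{N,mac}$); this is harmless, since every application of the lemma reduces it to $\de\,\CD_N(t)$ through \eqref{apX}, but you should not need to invoke \eqref{apX} inside the proof itself.
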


\subsection{Lyapunov inequality for $\CE_{N_1}(t)$}

In this section we derive the basic energy estimates on $\CE_{N,\ell,\la}(t)$ in the case of $N=N_1$ and $\ell=\la=0$. Notice that $N_1$ is the highest order, and regarding $\pa_\be^\al f$, the weight function takes the form of $w_{-|\be|}(\xi)$ independent of time.

\begin{lemma}\label{lem.n1}
There is an energy functional $\CE_{N_1}(t)$ such that 
\begin{equation}
\label{lem.n1.1}
\frac{d}{dt}\CE_{N_1}(t)+\kappa\CD_{N_1}(t)\lesssim \frac{\de}{(1+t)^{1+\vth}}\CD_{N_1,\ell_1,\la_0}(t)+\CE_{N_1}(t)\CE_{N_0,\ell_0-1,\la_0}(t),
\end{equation}
for $0\leq t\leq T$.
\end{lemma}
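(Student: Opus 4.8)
The plan is to run the standard weighted energy method of \cite{Guo-L,Guo3,DYZ} on the full nonlinear system \eqref{ns} at the top order $N=N_1$ with the time-independent weight $w_{-|\be|}=w_{-|\be|,0}$, and to organize the nonlinear right-hand side so that every term that is not already absorbed by the dissipation $\CD_{N_1}(t)$ lands in one of the two allowed error buckets: a piece of the higher-weight dissipation $\CD_{N_1,\ell_1,\la_0}(t)$ carrying the decaying factor $\de/(1+t)^{1+\vth}$ produced by the time-velocity exponential weight, or a product $\CE_{N_1}(t)\,\CE_{N_0,\ell_0-1,\la_0}(t)$. First I would perform the pure spatial estimate: apply $\pa^\al$ with $|\al|\le N_1$ to the first equation of \eqref{ns}, take the $L^2_{x,\xi}$ inner product with $\pa^\al f$, use Lemma \ref{lem.Ld} (nonnegativity of $L$ and $\lag Lf,f\rag\gtrsim|\{I-P\}f|_\FD^2$) for the linear collision term, and handle the Maxwell part together with $-\pa^\al(E\cdot\xi\mu^{1/2}q_1)$ by the usual macro--micro cancellation using \eqref{M}; the macro dissipation then comes from Lemma \ref{lem.mad} after adding $\epsilon\,\CE_{N_1}^{int}(t)$ for small $\epsilon$. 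Next I would do the mixed velocity-spatial estimate: apply $\pa_\be^\al$ with $|\al|+|\be|\le N_1$, pair against $w_{-|\be|}^2\pa_\be^\al f$, and invoke the second and third parts of Lemma \ref{lem.LGa} — the weighted coercivity of $\pa_\be L$ (with the $\eta$-small top-order loss reabsorbed and the lower-$|\be|$ terms handled by induction downward on $|\be|$) and the product estimate \eqref{est.ga} for $\Ga$. The transport commutator $[\pa_\be,\xi\cdot\na_x]\pa^\al f=\sum\pa_{\be'}^{\al+e_i}f$ is controlled by the $|\xi|$-gain built into $|\cdot|_{\FD,\tau}$ versus $|\cdot|_{\tau}$ exactly as in \cite{Guo-L}.

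The heart of the matter is the nonlinear term $S$ in \eqref{def.S}, and here I would split into the collision part $\Ga(f,f)$ and the field-driven parts $\tfrac12 q_0E\cdot\xi f$ and $-q_0(E+\xi\times B)\cdot\na_\xi f$. For $\Ga(f,f)$, estimate \eqref{est.ga} with $\tau=|\be|-0$ gives a trilinear bound; distributing $N_1$ derivatives via Sobolev embedding (putting $L^\infty_x$ on the factor with $\le N_1/2+1\le N_0$ derivatives, $L^2_x$ on the other), one factor is bounded by $\CE_{N_1}^{1/2}$ and the complementary low-order factor by $\CE_{N_0,\ell_0-1,\la_0}^{1/2}$ — note the extra weight $\lag\xi\rag^{-1}$ hidden in $\ell_0-1$ is harmless since $\Ga$ needs no growth weight — and the remaining $|\pa_\be^\al f_3|_{\FD,\tau}$ is a genuine dissipation factor, so after Cauchy--Schwarz this contributes to $\kappa^{-1}\CE_{N_1}\CE_{N_0,\ell_0-1,\la_0}$ plus a controllable $\eta$-fraction of $\CD_{N_1}$. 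The delicate terms are $E\cdot\xi f$ and $(E+\xi\times B)\cdot\na_\xi f$: both carry a velocity growth ($|\xi|$ in the first, and after the velocity derivative hits $w_{-|\be|}$ an extra $\lag\xi\rag^{-(\ga+2)}=\lag\xi\rag^{|\ga+2|}$ in the second) that the $N_1$-level dissipation, which only controls $|\{I-P\}f|_\FD$ with $\FD$-weight $\lag\xi\rag^{(\ga+2)/2}\le1$, cannot absorb. The resolution, following \cite{DYZ,DYZ-s,DYZ-VPL}: on the $\na_\xi$ term move the extra $\lag\xi\rag^2$ onto the exponential part of the weight via the identity $\lag\xi\rag^2 w_{\tau,\la_0}^2=\tfrac{(1+t)^\vth}{\la_0}\,\pa_{??}$... more precisely use that differentiating $\exp\{\la_0\lag\xi\rag^2/(1+t)^\vth\}$ in $\xi$ produces the factor $\la_0\lag\xi\rag/(1+t)^\vth$, so after integration by parts in $\xi$ the bad growth is paid for by the last term of $\CD_{N_1,\ell_1,\la_0}(t)$ in \eqref{def.dr}, namely $\tfrac{\la_0}{(1+t)^{1+\vth}}\|\lag\xi\rag\pa_\be^\al\{I-P\}f\|_{|\be|-\ell_1,\la_0}^2$; the accompanying $E$ or $B$ factor in $L^\infty_x$ is then bounded by $\CE_{N_0}^{1/2}\le\de$, producing exactly the advertised $\tfrac{\de}{(1+t)^{1+\vth}}\CD_{N_1,\ell_1,\la_0}(t)$. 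The same device handles the $E\cdot\xi f$ term after writing $\xi f=\xi\{I-P\}f+\xi Pf$ and noting $\xi Pf$ is a Schwartz-in-$\xi$ macro contribution controlled by $\|(a,b,c)\|$-type dissipation times $\|E\|_{H^{N_1-1}}$ (part of $\CD_{N_1}$), while $\xi\{I-P\}f$ uses the exponential-weight trick again.

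I would then close the argument by combining: $\tfrac{d}{dt}[\,\text{weighted energies}+\epsilon_1\CE_{N_1}^{int}\,]+\kappa\CD_{N_1}\lesssim \tfrac{\de}{(1+t)^{1+\vth}}\CD_{N_1,\ell_1,\la_0}+\CE_{N_1}\CE_{N_0,\ell_0-1,\la_0}$, choosing the interaction-functional coefficient $\epsilon_1$ small enough that the left side remains $\sim\CE_{N_1}(t)$ as required by \eqref{def.e}, and absorbing all $\eta$-fractions of $\CD_{N_1}$ into the main dissipation. The main obstacle I anticipate is bookkeeping the velocity-growth terms $E\cdot\xi f$ and $(E+\xi\times B)\cdot\na_\xi f$ so that the growth is \emph{entirely} transferred to the $\la_0/(1+t)^{1+\vth}$-term of $\CD_{N_1,\ell_1,\la_0}$ — in particular checking that at the top order $|\al|+|\be|=N_1$ one never needs $(E,B)$-derivatives of order $>N_0$ in $L^\infty_x$ (which would not decay, by the regularity-loss feature), so that the low-order energy $\CE_{N_0}$ or $\CE_{N_0,\ell_0,\la_0}$ always supplies the $\de$; this is precisely the reason the norm $X(t)$ in \eqref{def.X} carries both $\CE_{N_1,\ell_1,\la_0}$ and $\CE_{N_0,\ell_0,\la_0}$, and Lemma \ref{lem.n1} is the step where that structure is first exploited.
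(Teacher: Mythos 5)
Your overall skeleton --- the zero- and higher-order spatial estimates, the $\lag\xi\rag^{2(\ga+2)|\be|}$-weighted estimates on $\pa_\be^\al\{I-P\}f$ via the micro equation, the interactive functional from Lemma \ref{lem.mad}, and the three-way split of $S$ with the case analysis $|\al_1|\le N_1/2$ versus $|\al_1|>N_1/2$ feeding the two error buckets --- matches the paper. The gap is in the mechanism you propose for producing the first bucket $\frac{\de}{(1+t)^{1+\vth}}\CD_{N_1,\ell_1,\la_0}(t)$. You attribute the decaying prefactor to the time-velocity exponential weight: integrate by parts in $\xi$, let $\na_\xi$ hit $\exp\{\la_0\lag\xi\rag^2/(1+t)^\vth\}$ to produce $\la_0\lag\xi\rag/(1+t)^\vth$, and bound the field in $L^\infty_x$ by $\CE_{N_0}^{1/2}\le\de$. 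This cannot work here, for two reasons. First, Lemma \ref{lem.n1} is the \emph{unweighted} estimate: the test weight is $\lag\xi\rag^{2(\ga+2)|\be|}$, time-independent and with no exponential factor, so differentiating it produces nothing of the form $\la_0\lag\xi\rag/(1+t)^\vth$; the exponential-weight device belongs to the proof of Lemma \ref{lem.n1w}, where the left-hand dissipation is itself $\CD_{N_1,\ell_1,\la_0}$ and a crude bound $\de\,\CD_{N_1,\ell_1,\la_0}$ can simply be absorbed. Second, even granting the mechanism, it yields at best a factor $\de(1+t)^{-\vth}$, not $\de(1+t)^{-(1+\vth)}$; and the full power $1+\vth$ is not cosmetic --- it is exactly what makes $\int_0^t(1+s)^{-1-\vth}\CD_{N_1,\ell_1,\la_0}(s)\,ds$ controllable in Lemma \ref{lem.ce1} by comparison with $\int_0^t(1+s)^{-(1+\eps_0)/2}\CD_{N_1,\ell_1,\la_0}(s)\,ds$, since the weighted dissipation is \emph{not} time-integrable without a negative time weight (its energy $\CE_{N_1,\ell_1,\la_0}$ is allowed to grow).

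The correct source of the prefactor is the a priori decay of the field encoded in the last line of \eqref{def.X}: the bootstrap assumption $X(t)\le\de^2$ gives $\|\pa^{\al_1}(E,B)\|_{L^\infty}\lesssim\|\na_x(E,B)\|_{H^{N_0-1}}\le\de(1+t)^{-(1+\vth)}$ for all $|\al_1|\le N_1/2$ (which also settles your worry about regularity loss: only field derivatives of order at most $N_0$ are ever placed in $L^\infty_x$). The velocity growth $|\xi|$ from $E\cdot\xi f$, and the growth of order $\lag\xi\rag^{1-(\ga+2)}$ from $(E+\xi\times B)\cdot\na_\xi f$ measured against the $|\be|$-dependent weights, is then absorbed by the large algebraic factor $\lag\xi\rag^{-(\ga+2)\ell_1}$ present in $\CD_{N_1,\ell_1,\la_0}$ but absent from $\CD_{N_1}$ --- not by the exponential factor or by the $\la_0/(1+t)^{1+\vth}$-term of \eqref{def.dr}. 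With this substitution your argument closes; the remaining pieces (the $\Ga$ estimate giving $\CE_{N_1}^{1/2}\CD_{N_1}$, the high-derivative case $|\al_1|>N_1/2$ giving $\CE_{N_1}^{1/2}\CE_{N_0,\ell_0-1,\la_0}^{1/2}\CD_{N_1}^{1/2}$ and hence the second bucket after Cauchy--Schwarz) are as in the paper.
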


\begin{proof}
First of all, it is straightforward to establish the energy identities
\begin{equation}
\label{lem.n1.p1}
\frac{1}{2}\frac{d}{dt} (\|f\|^2+\|(E,B)\|^2) + \lag L f, f\rag =\lag S, f\rag,
\end{equation}
and
\begin{multline}
\label{lem.n1.p2}
\frac{1}{2}\frac{d}{dt}\sum_{1\leq |\al|\leq N_1} (\|\pa^\al f\|^2+\|\pa^\al (E,B)\|^2) + \sum_{1\leq |\al|\leq N_1} \lag L\pa^\al f,\pa^\al f\rag \\
=\sum_{1\leq |\al|\leq N_1} \lag \pa^\al S,\pa^\al f\rag.
\end{multline}
By applying the micro projection $I-P$ to the first equation of \eqref{ns}, it can be rewritten as 
\begin{eqnarray}
\pa_t \{I-P\}f +\xi\cdot \na_x \{I-P\}f &-&E\cdot \xi \mu^{1/2} +L f \notag\\
&&= \{I-P\}S+P\xi\cdot \na_x f -\xi \cdot \na_x P f.\label{ns.mic}
\end{eqnarray}
After acting $\pa^\al_\be$ with $|\al|+|\be|\leq N_1$ and $|\be|\geq 1$ to the above equation and further multiplying it by $\lag \xi\rag^{2(\ga+2)|\be|}\pa_\be^\al \{I-P\}f$, the direct energy estimate gives the identity
\begin{eqnarray*}
&&\frac{1}{2}\|\pa_\be^\al \{I-P\} f\|_{|\be|}^2+\lag \pa_\be^\al Lf,\lag \xi\rag^{2(\ga+2)|\be|}\pa_\be^\al \{I-P\}f\rag\\
&& =\lag \pa^\al_\be \{-\xi\cdot \na_x \{I-P\}f+E\cdot \xi \mu^{1/2}\},\lag \xi\rag^{2(\ga+2)|\be|}\pa_\be^\al \{I-P\}f\rag\\
&&\quad+\lag \pa^\al_\be \{\{I-P\}S+P\xi\cdot \na_x f -\xi \cdot \na_x P f\},\lag \xi\rag^{2(\ga+2)|\be|}\pa_\be^\al \{I-P\}f\rag, 
\end{eqnarray*}
which from Lemma \ref{lem.LGa} implies
\begin{eqnarray}
&\dis  \frac{1}{2}\frac{d}{dt}\sum_{m=1}^{N_1}C_m\sum_{\substack{|\be|= m \notag\\
|\al|+|\be|\leq N_1 }}\|\pa_\be^\al \{I-P\} f\|_{|\be|}^2 +\ka \sum_{\substack{|\be|\geq 1 \\
|\al|+|\be|\leq N_1 }}\|\pa_\be^\al \{I-P\} f\|_{\FD, |\be|}^2 \notag\\
&\dis \lesssim \sum_{\substack{|\be|= 0 \\
|\al|+|\be|\leq N_1 }}\|\pa_\be^\al \{I-P\} f\|_{\FD, |\be|}^2+ \sum_{|\al|\leq N_1-1} (\|\na_x\pa^\al (a_\pm,b,c)\|^2+\|\pa^\al E\|^2)\notag\\
&\dis +\sum_{m=1}^{N_1}C_m\sum_{\substack{|\be|= m \\
|\al|+|\be|\leq N_1 }}\lag \pa^\al_\be \{I-P\}S,\lag \xi\rag^{2(\ga+2)|\be|}\pa_\be^\al \{I-P\}f\rag.\label{lem.n1.p3}
\end{eqnarray}
Moreover, from Lemma \ref{lem.mad} as well as \eqref{apX},
\begin{equation}
\label{lem.n1.p4}
\frac{d}{dt} \CE_{N_1}^{int}(t)+\ka \CD_{N_1,mac}(t)\lesssim  \sum_{
|\al|\leq N_1 }\|\pa^\al \{I-P\} f\|_{\FD}^2+\de^2\CD_{N_1}(t).
\end{equation}
Then, since $\de>0$ can be small enough, the proper linear combination of \eqref{lem.n1.p1}, \eqref{lem.n1.p2}, \eqref{lem.n1.p3} and \eqref{lem.n1.p4} implies that there is an energy functional $\CE_{N_1}(t)$ satisfying \eqref{def.e} such that 
\begin{equation}
\label{lem.n1.p5}
\frac{d}{dt}\CE_{N_1}(t) +\kappa \CD_{N_1}(t)\lesssim \CI^{(1)}_{N_1}(t),
\end{equation}
where
\begin{multline}
\notag
\CI^{(1)}_{N_1}(t)=\lag S,f\rag +\sum_{1\leq |\al|\leq N_1} \lag \pa^\al S,\pa^\al f\rag\\
+\sum_{m=1}^{N_1}C_m\sum_{\substack{|\be|= m \\
|\al|+|\be|\leq N_1 }}\lag \pa^\al_\be \{I-P\}S,\lag \xi\rag^{2(\ga+2)|\be|}\pa_\be^\al \{I-P\}f\rag.
\end{multline}
Finally, we claim that
\begin{multline}
\label{lem.n1.p6}
\CI^{(1)}_{N_1}(t)\lesssim  \CE_{N_1}^{1/2}(t)\CD_{N_1}(t) +\frac{\de}{(1+t)^{1+\vth}}\CD_{N_1,\ell_1,\la_0}(t)\\
+\CE_{N_1}^{1/2}(t) \CE_{N_0,\ell_0-1,\la_0}^{1/2}(t)\CD_{N_1}^{1/2}(t).
\end{multline}
Therefore, \eqref{lem.n1.1} follows from plugging \eqref{lem.n1.p6} into \eqref{lem.n1.p5} and applying \eqref{apX} and the Cauchy-Schwarz inequality. This then completes the proof of Lemma \ref{lem.n1}.
\end{proof}

\begin{proof}[Proof of \eqref{lem.n1.p6}]
We first consider the estimate of $\CI^{(1)}_{N_1}(t)$ corresponding to $\Ga(f,f)$ in the nonlinear term $S$. As in \cite{Guo-L}, by decomposing $f$ as $Pf+\{I-P\}f$ and using Lemma \ref{lem.LGa}, it directly follows that it is bounded up to a generic constant by $\CE_{N_1}^{1/2}(t)\CD_{N_1}(t)$.
Recall from the definition of $X(t)$,
\begin{equation}
\notag
\|\na_x(E,B)\|_{H^{N_0-1}} \leq \frac{X^{1/2}(t)}{(1+t)^{1+\vth}}\leq \frac{\de}{(1+t)^{1+\vth}}.
\end{equation}
For the zero-order term related to the electromagnetic field, it holds that
\begin{eqnarray*}
&&\lag \frac{1}{2}q_0E\cdot \xi f-q_0 (E+\xi\times B)\cdot \na_\xi f,f\rag=\lag \frac{1}{2}q_0E\cdot \xi f,f\rag\\
&&\lesssim\iint_{\R^3\times \R^3} |E|\cdot |\xi|(|Pf|^2+|\{I-P\}f|^2)\,dxd\xi\\
&&\lesssim \|E\|\cdot \|(a,b,c)\|_{L^\infty}\|(a,b,c)\|+\|E\|_{L^\infty} \iint_{\R^3\times \R^3}|\xi|\cdot |\{I-P\}f|^2\,dxd\xi\\
&&\lesssim\CE_{N_1}^{1/2}(t)\CD_{N_1}(t)+\frac{\de}{(1+t)^{1+\vth}} \CD_{N_1,\ell_1,\la_0}(t).
\end{eqnarray*}
For the $\pa^\al$ derivative term related to $(E,B)$ with $1\leq |\al|\leq N_1$, one has
\begin{eqnarray*}
&&\lag \pa^\al (E\cdot \xi f),\pa^\al f\rag=\sum_{\al_1\leq \al}C_{\al_1}^\al \lag \pa^{\al_1}E\cdot \xi \pa^{\al-\al_1} f,\pa^\al f\rag\\
&& \lesssim \sum_{|\al_1|\leq N_1/2,\al_1\neq \al}\|\pa^{\al_1} E\|_{L^\infty} \left\||\xi|\lag \xi \rag^{\frac{\ga+2}{2}} \pa^{\al-\al_1}f\right\|\cdot \|\lag \xi\rag^{\frac{\ga+2}{2}}\pa^\al f\|\\
&&\quad +\sum_{|\al_1|>N_1/2\, \text{or}\, \al_1=\al}\|\pa^{\al_1} E\|\cdot \sup_{x}\left||\xi|\lag \xi \rag^{\frac{\ga+2}{2}} \pa^{\al-\al_1}f\right|_{L^2_\xi} \|\lag \xi\rag^{\frac{\ga+2}{2}}\pa^\al f\|\\
&&\lesssim \frac{\de}{(1+t)^{1+\vth}}\CD_{N_1,\ell_1,\la_0}(t)
+\CE_{N_1}^{1/2}(t) \CE_{N_0,\ell_0-1,\la_0}^{1/2}(t)\CD_{N_1}^{1/2}(t),
\end{eqnarray*}
where the Sobolev inequality $\|g\|_{L^\infty}\leq C\|\na_x g\|_{H^1}$ for any function $g=g(x)\in H^2$ has been used, and we also have used the choice of $N_1=\frac{3}{2}N_0$ and $\ell_1=\frac{1}{2}\ell_0$ with $N_0$ and $\ell_0$ properly large. And in a similar way, it holds that
\begin{multline}
\notag
\lag \pa^\al \{(\xi\times B)\cdot \na_\xi f\},\pa^\al f\rag =\sum_{0<\al_1\leq \al}C_{\al_1}^\al \lag (\xi\times \pa^{\al_1}B)\cdot \na_\xi \pa^{\al-\al_1}f,\pa^\al f\rag\\
\lesssim \frac{\de}{(1+t)^{1+\vth}}\CD_{N_1,\ell_1,\la_0}(t)
+\CE_{N_1}^{1/2}(t) \CE_{N_0,\ell_0-1,\la_0}^{1/2}(t)\CD_{N_1}^{1/2}(t).
\end{multline}
The completely same estimate holds true for the $\pa_\be^\al$ derivative term related to $(E,B)$ with $|\al|+|\be|\leq N_1$ and $|\be_1|\geq 1$, through observing
\begin{multline}
\notag
\{I-P\}\{\frac{1}{2}q_0E\cdot \xi f-q_0 (E+\xi\times B)\cdot \na_\xi f\}\\
=\frac{1}{2}q_0E\cdot \xi \{I-P\}f-q_0 (E+\xi\times B)\cdot \na_\xi \{I-P\}f\\
+\frac{1}{2}q_0E\cdot \xi Pf-q_0 (E+\xi\times B)\cdot \na_\xi Pf\\
-P\{\frac{1}{2}q_0E\cdot \xi f-q_0 (E+\xi\times B)\cdot \na_\xi f\}.
\end{multline}
Therefore, \eqref{lem.n1.p6} follows by collecting these estimates.
\end{proof}

\subsection{Lyapunov inequality for $\CE_{N_1,\ell_1,\la_0}(t)$}

In this section we turn to the weighted energy estimates on $\CE_{N_1,\ell_1,\la_0}(t)$. Notice that due to the regularity-loss property of the whole system, the weighted high-order energy functional $\CE_{N_1,\ell_1,\la_0}(t)$ can not be  bounded uniformly in time; it has been actually seen from the proof of Lemma \ref{thm.lb} for the linearized analysis, cf.~\eqref{ad.p1}. Instead we may expect that $\CE_{N_1,\ell_1,\la_0}(t)$ increases in time. Another trouble arises from the weighted estimate on derivatives of the highest order $N_1$ for the linear term $E\cdot \xi \mu^{1/2}$. However, these difficulties will be resolved by considering the time weighted estimate with the time rate of negative power.  In addition, although the velocity growth in the nonlinear term containing the electromagnetic field could be dealt with through the time-dependent exponential factor in the weight function, it is impossible when the  electromagnetic field gains the differentiation of higher orders since they again could not decay in time. Thus, to overcome it, one has to refine the nonlinear estimates in order to use the time-decay property of the lower-order energy functional $\CE_{N_0,\ell_0-1,\la_0}$ with the higher-order algebraic velocity weight.

\begin{lemma}\label{lem.n1w}
There is an energy functional $\CE_{N_1,\ell_1,\la_0}(t)$ such that 
\begin{multline}
\label{lem.n1w.1}
\frac{d}{dt}\CE_{N_1,\ell_1,\la_0}(t)+\kappa\CD_{N_1,\ell_1,\la_0}(t)\lesssim \CE_{N_1,\ell_1,\la_0}(t)\CE_{N_0,\ell_0-1,\la_0}(t)\\
+ \sum_{|\al|=N_1} \lag \pa^\al E\cdot \xi \mu^{1/2},w_{-\ell_1,\la_0}^2 \pa^\al f\rag,
\end{multline}
for $0\leq t\leq T$.
\end{lemma}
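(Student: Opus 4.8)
The plan is to run, in parallel with the proof of Lemma~\ref{lem.n1}, a family of weighted energy estimates on $\pa_\be^\al f$ for $|\al|+|\be|\le N_1$ tested against the time-dependent weight $w_{|\be|-\ell_1,\la_0}^2(t,\xi)$, organized by a downward induction on $|\be|$ with suitable positive coefficients $C_m$. For $|\be|=0$ I apply $\pa^\al$ to the first equation of \eqref{ns} and test with $w_{-\ell_1,\la_0}^2\pa^\al f$; for $|\be|\ge 1$ I apply $\pa_\be^\al$ to the micro equation \eqref{ns.mic} and test with $w_{|\be|-\ell_1,\la_0}^2\pa_\be^\al\{I-P\}f$. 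Since $\pa_t w_{\tau,\la_0}^2=-\tfrac{2\la_0\vth}{(1+t)^{1+\vth}}\lag\xi\rag^2 w_{\tau,\la_0}^2$, the time derivative of the weight produces precisely the extra good term $\tfrac{\la_0\vth}{(1+t)^{1+\vth}}\|\lag\xi\rag\pa_\be^\al\{I-P\}f\|_{|\be|-\ell_1,\la_0}^2$ entering $\CD_{N_1,\ell_1,\la_0}(t)$, while the collision contributions, handled by Lemma~\ref{lem.LGa}, yield the main dissipation $\kappa\,|\pa_\be^\al\{I-P\}f|_{\FD,|\be|-\ell_1,\la_0}^2$; the localized error $|\chi_{\{|\xi|\le 2C\}}\pa_\be^\al\{I-P\}f|_\tau^2$ and the $|\be'|<|\be|$ remainders are absorbed along the $C_m$-induction, after augmenting by Lemma~\ref{lem.mad} to supply the macroscopic dissipation and using the smallness \eqref{apX}.

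Next I dispose of the remaining linear terms. When a $\xi$-derivative hits the transport term $\xi\cdot\na_x f$ one loses a power of velocity weight, but the gain of $\lag\xi\rag^{\ga+2}$ from lowering $|\be|$ closes this against the $|\be'|<|\be|$ terms of the induction; the macro--micro couplings $P\xi\cdot\na_x f-\xi\cdot\na_x Pf$ involve only $Pf$ and are absorbed into the macroscopic part of $\CD_{N_1,\ell_1,\la_0}(t)$. For the linear field term $-E\cdot\xi\mu^{1/2}q_1$ the Gaussian $\mu^{1/2}$ kills the algebraic weight and, by oddness in $\xi$, the pairing with $Pf$ vanishes; hence whenever $|\be|\ge 1$, or $|\be|=0$ with $|\al|\le N_1-1$, the contribution is bounded by $\|\pa^\al E\|$ — controlled by the field dissipation $\|E\|_{H^{N_1-1}}^2$ inside $\CD_{N_1,\ell_1,\la_0}(t)$ — times a dissipative factor of $\{I-P\}f$, and is absorbable. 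The sole exception is $|\be|=0$, $|\al|=N_1$: by regularity-loss $\|\pa^{N_1}E\|$ does not appear in the dissipation, so $\sum_{|\al|=N_1}\lag\pa^\al E\cdot\xi\mu^{1/2},w_{-\ell_1,\la_0}^2\pa^\al f\rag$ is carried to the right-hand side, to be handled later by a time-weighted estimate of negative power.

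The nonlinear term $S$ is then estimated term by term. For $\Ga(f,f)$ one uses the weighted bound \eqref{est.ga}: split $f=Pf+\{I-P\}f$, distribute derivatives so that the higher-order factor is measured in $L^2_{x,\xi}$ and the lower-order one in $L^\infty_x L^2_\xi$ via the Sobolev embedding, producing $\lesssim\CE_{N_1,\ell_1,\la_0}^{1/2}(t)\CD_{N_1,\ell_1,\la_0}(t)$ plus cross terms that \eqref{apX} renders harmless. The field nonlinearities $\tfrac12 q_0 E\cdot\xi f$ and $q_0(E+\xi\times B)\cdot\na_\xi f$ carry the velocity growth $\lag\xi\rag$ and, when the $\xi$-derivative meets the weight, an extra $\lag\xi\rag^{-(\ga+2)}$; the crucial dichotomy is the following. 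If the higher derivative falls on $(E,B)$ and the lower on $f$, I use the time decay of the lower-order weighted functional $\CE_{N_0,\ell_0-1,\la_0}(t)$, whose spare algebraic weight — guaranteed by $\ell_0-3N_0$ being large — swallows the velocity-growth factor, giving the contribution $\CE_{N_1,\ell_1,\la_0}(t)\,\CE_{N_0,\ell_0-1,\la_0}(t)$. If instead the higher derivative falls on $f$ and the lower on $(E,B)$, the low-order field derivatives decay like $\de/(1+t)^{1+\vth}$ (built into $X(t)$), so after Cauchy--Schwarz the velocity-growth factor is absorbed by the extra dissipation $\tfrac{\la_0\vth}{(1+t)^{1+\vth}}\|\lag\xi\rag\pa_\be^\al\{I-P\}f\|_{|\be|-\ell_1,\la_0}^2$ once $\de$ is small. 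Collecting all these contributions together with \eqref{apX} yields \eqref{lem.n1w.1}.

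I expect the main obstacle to be exactly this last dichotomy for the field-nonlinearities: one must check, over every admissible splitting of the $|\al|+|\be|\le N_1$ derivatives, that each velocity-growth factor produced by $E\cdot\xi f$ or $(E+\xi\times B)\cdot\na_\xi f$ is genuinely matched either to a fast-decaying low-order $(E,B)$-norm or to the higher-weighted functional $\CE_{N_0,\ell_0-1,\la_0}$, with the index relations $N_1=\tfrac32 N_0$, $\ell_1=\tfrac12\ell_0$ and the largeness of $\ell_0-3N_0$ exactly preventing any leftover term. Secondary but still delicate points are closing the $C_m$-induction in the presence of the $t$-dependent weight and correctly isolating the single $|\al|=N_1$ linear field term on the right-hand side of \eqref{lem.n1w.1}.
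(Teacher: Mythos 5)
Your proposal follows essentially the same route as the paper: weighted estimates on $\pa^\al f$ for $|\be|=0$ and on $\pa_\be^\al\{I-P\}f$ for $|\be|\geq 1$ combined with suitable constants and with the unweighted estimates and Lemma \ref{lem.mad}, the extra dissipation from $\pa_t w_{\tau,\la_0}^2$, Lemmas \ref{lem.Ld}--\ref{lem.LGa} for the collision terms, isolation of the $|\al|=N_1$ linear field term, and exactly the paper's dichotomy (low derivative on $(E,B)$ absorbed via $(1+t)^{1+\vth}\|\na_x(E,B)\|_{H^{N_0-1}}$ against the $\lag\xi\rag^2/(1+t)^{1+\vth}$ dissipation; high derivative on $(E,B)$ paired with $\CE_{N_0,\ell_0-1,\la_0}$ via Sobolev). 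The only slip is at $\al=\be=0$, where the weighted test should be against $\{I-P\}f$ rather than the full $f$ (as in the paper's \eqref{lem.n1w.p2}), since the compensating term $\|\chi_{\{|\xi|\le 2C\}}f\|^2$ from Lemma \ref{lem.LGa} would otherwise produce the zero-order macroscopic norm $\|(a,b,c)\|^2$, which is not part of $\CD_{N_1,\ell_1,\la_0}(t)$.
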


\begin{proof}
Starting from the first equation of \eqref{ns}, the energy estimate on $ \pa^\al f$ with $1\leq |\al|\leq N_1$ weighted by the time-velocity dependent function  $w_{-\ell_1,\la_0}=w_{-\ell_1,\la_0}(t,\xi)$ gives
\begin{multline}
\frac{1}{2}\frac{d}{dt} \sum_{1\leq |\al|\leq N_1} \|\pa^\al f\|_{-\ell_1,\la_0}^2+ \sum_{1\leq |\al|\leq N_1} \lag L\pa^\al f,w_{-\ell_1,\la_0}^2\pa^\al f \rag\\
+ \frac{\vth \la_0}{(1+t)^{1+\vth}} \|\lag \xi \rag \pa^\al f \|_{-\ell_1,\la_0}^2= \sum_{1\leq |\al|\leq N_1} \lag \pa^\al S,w_{-\ell_1,\la_0}^2\pa^\al f \rag\\
+ \sum_{1\leq |\al|\leq N_1} \lag\pa^\al E\cdot \xi \mu^{1/2},w_{-\ell_1,\la_0}^2\pa^\al f \rag.
\label{lem.n1w.p1}
\end{multline}
Similarly, from \eqref{ns.mic}, one has the weighted energy estimate on $\{I-P\}f$ 
\begin{eqnarray}
&&\dis  \frac{1}{2}\frac{d}{dt} \|\{I-P\}f\|_{-\ell_1,\la_0}^2+\lag Lf,w_{-\ell_1,\la_0}^2 \{I-P\}f \rag\notag\\
&&\quad\dis + \frac{\vth \la_0}{(1+t)^{1+\vth}} \|\lag \xi \rag \{I-P\}f \|_{-\ell_1,\la_0}^2\notag\\
&&=\lag \{I-P\}S,w_{-\ell_1,\la_0}^2\{I-P\} f \rag+\lag E\cdot \xi \mu^{1/2},w_{-\ell_1,\la_0}^2\{I-P\} f \rag\notag\\
&&\quad \dis+\lag P\xi\cdot \na_x f -\xi \cdot \na_x P f,w_{-\ell_1,\la_0}^2\{I-P\} f \rag,
\label{lem.n1w.p2}
\end{eqnarray}
and the weighted energy estimate on $\{I-P\}\pa_\be^\al f$ with $|\al|+|\be|\leq N_1$ and $|\be|\geq 1$ 
\begin{eqnarray}
&\dis  \frac{1}{2}\frac{d}{dt}\sum_{m=1}^{N_1}C_m\sum_{\substack{|\be|= m \\
|\al|+|\be|\leq N_1 }}\|\pa_\be^\al \{I-P\} f\|_{|\be|-\ell_1,\la_0}^2 \notag\\
&\dis +\ka \sum_{\substack{|\be|\geq 1 \\
|\al|+|\be|\leq N_1 }}\left(\|\pa_\be^\al \{I-P\} f\|_{\FD, |\be|-\ell_1,\la_0}^2 +\frac{\la_0}{(1+t)^{1+\vth}} \|\lag \xi \rag \pa_\be^\al \{I-P\}f\|\right)\notag\\
&\dis \lesssim \sum_{\substack{|\be|= 0 \\
|\al|+|\be|\leq N_1 }}\|\pa_\be^\al \{I-P\} f\|_{\FD, |\be|-\ell_1,\la_0}^2+ \sum_{|\al|\leq N_1-1} (\|\na_x\pa^\al (a_\pm,b,c)\|^2+\|\pa^\al E\|^2)\notag\\
&\dis +\sum_{m=1}^{N_1}C_m\sum_{\substack{|\be|= m \\
|\al|+|\be|\leq N_1 }}\lag \pa^\al_\be \{I-P\}S,w_{|\be|-\ell_1,\la_0}^2\pa_\be^\al\{I-P\} f\rag.
\label{lem.n1w.p3}
\end{eqnarray}
Then, the proper linear combination of  \eqref{lem.n1.p1}, \eqref{lem.n1.p2}, \eqref{lem.n1w.p1},  \eqref{lem.n1w.p2} and \eqref{lem.n1w.p3} implies that there is an energy functional $\CE_{N_1,\ell_1,\la_0}(t)$ satisfying \eqref{def.e} such that 
\begin{multline}
\label{lem.n1w.p4}
\frac{d}{dt}\CE_{N_1,\ell_1,\la_0}(t) +\ka \CD_{N_1,\ell_1,\la_0}(t)\lesssim \CI^{(2)}_{N_1,\ell_1,\la_0}(t) \\
+ \sum_{ |\al|= N_1} \lag\pa^\al E\cdot \xi \mu^{1/2},w_{-\ell_1,\la_0}^2\pa^\al f \rag.
\end{multline}
where
\begin{eqnarray}
 \CI^{(2)}_{N_1,\ell_1,\la_0}(t) &=&\lag S, f\rag + \sum_{1\leq |\al|\leq N_1}\lag \pa^\al S,\pa^\al f\rag \notag\\
&& +\lag \{I-P\}S, w_{-\ell_1,\la_0}^2\{I-P\}f\rag+\sum_{1\leq |\al|\leq N_1} \lag \pa^\al S,w_{-\ell_1,\la_0}^2\pa^\al f \rag\notag\\
&& +\sum_{m=1}^{N_1}C_m\sum_{\substack{|\be|= m \\
|\al|+|\be|\leq N_1 }}\lag \pa^\al_\be \{I-P\}S,w_{|\be|-\ell_1,\la_0}^2\pa_\be^\al\{I-P\} f\rag.\label{def.i2}
\end{eqnarray}
We now claim that 
\begin{eqnarray}
&\dis \CI^{(2)}_{N_1,\ell_1,\la_0}(t) \lesssim \CE_{N_0,\ell_0,\la_0}^{1/2}(t)\CD_{N_1,\ell_1,\la_0}(t)
 +\CE_{N_1,\ell_1,\la_0}^{1/2}(t)\CE_{N_0,\ell_0-1,\la_0}^{1/2}(t)\CD_{N_1,\ell_1,\la_0}^{1/2}(t)\notag\\
&\dis +\frac{1}{\la_0} (1+t)^{1+\vth}\|\na_x (E,B)\|_{H^{N_0-1}}  \CD_{N_1,\ell_1,\la_0}(t).
\label{lem.n1w.p5}
\end{eqnarray}
Notice that the first and third terms on the right are bounded up to a generic constant by $\de\CD_{N_1,\ell_1,\la_0}(t)$ due to the definition of $X(t)$ and the assumption $X(t)\leq \de^2$. This hence simplifies \eqref{lem.n1w.p5} as
\begin{equation}
\label{lem.n1w.p6}
 \CI^{(2)}_{N_1,\ell_1,\la_0}(t) \lesssim \CE_{N_1,\ell_1,\la_0}^{1/2}(t)\CE_{N_0,\ell_0-1,\la_0}^{1/2}(t)\CD_{N_1,\ell_1,\la_0}^{1/2}(t)
 +\de\CD_{N_1,\ell_1,\la_0}(t).
\end{equation}
Therefore, since $\de>0$ is small enough, by applying the Cauchy-Schwarz inequality to the  first term on the right-hand side of  \eqref{lem.n1w.p6} and then putting it into \eqref{lem.n1w.p4}, \eqref{lem.n1w.1} follows. This completes the proof of Lemma \ref{lem.n1w}.
\end{proof}

\begin{proof}[Proof of \eqref{lem.n1w.p5}]
For brevity, we only present the estimate of the fourth term on the right-hand side of \eqref{def.i2} since the estimate on other terms is simpler or follows in the completely same way. Take $\al$ with $1\leq |\al|\leq N_1$.
For the inner product term related to $\pa^\al \Ga(f,f)$, by using \eqref{est.ga} in Lemma \ref{lem.LGa}, considering the cases of $|\al'|\leq N_1/2$ and $|\al'|>N_1/2$, applying the Sobolev inequality $\|g\|_{L^\infty}\leq C \|\na_x g\|_{H^1}$ to the lower-order derivatives, and noticing the choice of $N_1=\frac{3}{2}N_0$ and $\ell_1=\frac{1}{2}\ell_0$ with $N_0$ and $\ell_0$ properly large, it follows that 
\begin{eqnarray*}
&&\lag \pa^\al\Ga(f,f), w_{-\ell_1,\la_0}^2\pa^\al f\rag\\
&&\lesssim
\{\CE_{N_0,\ell_0,\la_0}^{1/2}(t)\CD_{N_1,\ell_1,\la_0}^{1/2}(t)+\CE_{N_0,\ell_0-1,\la_0}^{1/2}(t)\CE_{N_1,\ell_1,\la_0}^{1/2}(t)\}\CD_{N_1,\ell_1,\la_0}^{1/2}(t)\\
&&\quad +\{\CE_{N_1,\ell_1,\la_0}^{1/2}(t)\CE_{N_0,\ell_0-1,\la_0}^{1/2}(t)+ \CD_{N_1,\ell_1,\la_0}^{1/2}(t)\CE_{N_0,\ell_0,\la_0}^{1/2}(t)\}\CD_{N_1,\ell_1,\la_0}^{1/2}(t).
\end{eqnarray*}
Next, for the term $E\cdot \xi f$ in $S$, one has
 \begin{eqnarray*}
&&\lag \pa^\al (E\cdot \xi f),w_{-\ell_1,\la_0}^2\pa^\al f\rag=\sum_{\al_1\leq \al}C_{\al_1}^\al \lag \pa^{\al_1}E\cdot \xi \pa^{\al-\al_1}f,w_{-\ell_1,\la_0}^2\pa^\al f\rag\\
&&\lesssim \sum_{\substack{|\al_1|\leq N_1/2 \\
\al_1\neq \al }}\|\pa^{\al_1}E\|_{L^\infty}\iint_{\R^3\times \R^3}|\xi| w_{-\ell_1,\la_0}^2(|\pa^{\al-\al_1}f|^2+|\pa^\al f |^2)\,dxd\xi \\
&&\quad +\sum_{\substack{|\al_1|>N_1/2 \\
\text{or}\, \al_1=\al }}\|\pa^{\al_1}E\|\cdot \sup_{x}\left||\xi|\lag \xi \rag^{\frac{\ga+2}{2}} w_{-\ell_1,\la_0}\pa^{\al-\al_1}f\right|_{L^2_\xi} \|\lag \xi\rag^{\frac{\ga+2}{2}}w_{-\ell_1,\la_0}\pa^\al f\|\\
&&\lesssim \frac{1}{\la_0} (1+t)^{1+\vth}\|\na_x (E,B)\|_{H^{N_0-1}}  \CD_{N_1,\ell_1,\la_0}(t)\\
&&\quad+\CE_{N_1,\ell_1,\la_0}^{1/2}(t)\CE_{N_0,\ell_0-1,\la_0}^{1/2}(t)\CD_{N_1,\ell_1,\la_0}^{1/2}(t).
\end{eqnarray*}
For the term $(E+\xi\times B)\cdot \na_\xi f$ in $S$, the difference point is that it contains the velocity derivative of order one. One can dedue that
 \begin{eqnarray}
&&\lag \pa^\al [(E+\xi\times B)\cdot \na_\xi f],w_{-\ell_1,\la_0}^2\pa^\al f\rag\notag\\
&&=\lag (E+\xi \times B)\cdot \na_\xi w_{-\ell_1,\la_0}^2, -\frac{1}{2} |\pa^\al f|^2\rag\notag\\
&&\quad +\sum_{0<\al_1\leq \al}C_{\al_1}^\al \lag (\pa^{\al_1}E+\xi\times \pa^{\al_1}B)\cdot \na_\xi \pa^{\al-\al_1}f,w_{-\ell_1,\la_0}^2\pa^\al f\rag.\label{i2.p1}
\end{eqnarray}
Here, it is straightforward to see that the first term on the right is bounded in a rough way by 
\begin{multline}
\notag
C\|E\|_{L^\infty} \iint_{\R^3\times \R^3}(\lag \xi\rag^{-1} +\frac{\lag \xi\rag}{(1+t)^{\vth}})w_{-\ell_1,\la_0}^2|\pa^\al f|^2\,dxd\xi\\
\lesssim \frac{1}{\la_0}(1+t)^{1+\vth}\|\na_x E\|_{H^1}\iint_{\R^3\times \R^3} \frac{\la_0\lag \xi\rag^2}{(1+t)^{1+\vth}}
w_{-\ell_1,\la_0}^2|\pa^\al f|^2\,dxd\xi\\
\lesssim \frac{1}{\la_0}(1+t)^{1+\vth}\|\na_x E\|_{H^1}\CD_{N_1,\ell_1,\la_0}(t).
\end{multline}
The second term on the right-hand side of \eqref{i2.p1} can be estimated as follows.  When $|\al_1|\leq N_1/2$ and $\al_1\neq \al$, it is bounded by
 \begin{eqnarray*}
&&C\|\pa^{\al_1}(E,B)\|_{L^\infty}\iint_{\R^3\times \R^3}\lag \xi\rag^{1-(\ga+2)} (w_{1-\ell_1,\la_0}^2|\na_\xi\pa^{\al-\al_1}f|^2+w_{-\ell_1,\la_0}^2|\pa^\al f |^2)\,dxd\xi \\
&&\lesssim \|\na_x\pa^{\al_1}(E,B)\|_{H^1}\iint_{\R^3\times \R^3}\lag \xi\rag^{2} (|w_{1-\ell_1,\la_0}\na_\xi\pa^{\al-\al_1}f|^2+|w_{-\ell_1,\la_0}\pa^\al f |^2)\,dxd\xi \\
&&\lesssim \frac{1}{\la_0} (1+t)^{1+\vth}\|\na_x (E,B)\|_{H^{N_0-1}}  \CD_{N_1,\ell_1,\la_0}(t),
\end{eqnarray*}
where we have used $-3\leq \ga<-2$, and when $|\al_1|>N_1/2$ or $\al_1=\al$, it is bounded by
 \begin{multline}
 \notag
C\|\pa^{\al_1}(E,B)\|\cdot \sup_{x}\left|\lag \xi \rag^{1+\frac{\ga+2}{2}} w_{-\ell_1,\la_0}\na_\xi\pa^{\al-\al_1}f\right|_{L^2_\xi} \|\lag \xi\rag^{\frac{\ga+2}{2}}w_{-\ell_1,\la_0}\pa^\al f\|\\
\lesssim \CE_{N_1,\ell_1,\la_0}^{1/2}(t)\CE_{N_0,\ell_0-1,\la_0}^{1/2}(t)\CD_{N_1,\ell_1,\la_0}^{1/2}(t).
\end{multline}
Collecting all the above estimates, \eqref{lem.n1w.p5} holds true for the fourth inner product term on the right-hand side of \eqref{def.i2}. This proves \eqref{lem.n1w.p5}.
\end{proof}

Now, we are ready to obtain the closed estimate on the first portion of the time-weighted energy norm $X(t)$ in the following

\begin{lemma}\label{lem.ce1}
It holds that
\begin{equation}
\label{ce1}
\sup_{0\leq s\leq t}\left\{\CE_{N_1}(s) + (1+s)^{-\frac{1+\eps_0}{2}} \CE_{N_1,\ell_1,\la_0} (s)\right\}
+\int_0^t \CD_{N_1}(s)\,ds\lesssim Y_0^2 +X^2(t),
\end{equation}
for $0\leq t\leq T$.
\end{lemma}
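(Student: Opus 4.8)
The plan is to combine Lemmas~\ref{lem.n1} and~\ref{lem.n1w} with a time‑weighted variant of Lemma~\ref{lem.n1}, form one master differential inequality for a linear combination of $\CE_{N_1}(t)$ and $(1+t)^{-\frac{1+\eps_0}{2}}\CE_{N_1,\ell_1,\la_0}(t)$, integrate in time, and close with the a priori hypothesis \eqref{apX}. Concretely I would use three differential inequalities on $[0,T]$: (a) Lemma~\ref{lem.n1} as it stands; (b) the result of multiplying Lemma~\ref{lem.n1} by $(1+t)^{-\eps_0}$ and rearranging, which produces on the left the extra nonnegative term $\frac{\eps_0}{(1+t)^{1+\eps_0}}\CE_{N_1}(t)$ — since $\CE_{N_1}(t)\gtrsim\sum_{|\al|=N_1}\|\pa^\al(E,B)\|^2$, this is a dissipation, degenerate as $t\to\infty$, for the top‑order field derivatives that is otherwise absent because of the regularity loss; and (c) the result of multiplying Lemma~\ref{lem.n1w} by $(1+t)^{-\frac{1+\eps_0}{2}}$ and discarding a nonnegative term,
\begin{equation*}
\frac{d}{dt}\Bigl[(1+t)^{-\frac{1+\eps_0}{2}}\CE_{N_1,\ell_1,\la_0}\Bigr]+\ka(1+t)^{-\frac{1+\eps_0}{2}}\CD_{N_1,\ell_1,\la_0}\lesssim(1+t)^{-\frac{1+\eps_0}{2}}\bigl(\CE_{N_1,\ell_1,\la_0}\CE_{N_0,\ell_0-1,\la_0}+J_{N_1}(t)\bigr),
\end{equation*}
where $J_{N_1}(t):=\sum_{|\al|=N_1}\lag\pa^\al E\cdot\xi\mu^{1/2}q_1,w_{-\ell_1,\la_0}^2\pa^\al f\rag$.

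The heart of the matter is the estimate of $J_{N_1}(t)$. Because $w_{-\ell_1,\la_0}^2$ is a radial function of $\xi$ and $(Pf)_+-(Pf)_-=(a_+-a_-)\mu^{1/2}$, the hydrodynamic contribution to $J_{N_1}$ vanishes by oddness, so only $\{I-P\}f$ survives; bounding the velocity integral by Cauchy--Schwarz against the rapidly decaying $|\xi|\mu^{1/2}w_{-\ell_1,\la_0}^2$ (integrable once $\la_0$ is small) and identifying the $\lag\xi\rag^{\frac{\ga+2}{2}}$‑weighted piece of $\{I-P\}f$ that appears inside $|\cdot|_{\FD}$, one obtains $|J_{N_1}(t)|\lesssim\|\na_x^{N_1}E\|\,\CD_{N_1}^{1/2}(t)$ — crucially the \emph{unweighted} dissipation $\CD_{N_1}$, not $\CD_{N_1,\ell_1,\la_0}$. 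By Young's inequality, $(1+t)^{-\frac{1+\eps_0}{2}}|J_{N_1}|\lesssim C_\eta(1+t)^{-(1+\eps_0)}\|\na_x^{N_1}E\|^2+\eta\,\CD_{N_1}(t)$ for any small $\eta>0$; after first multiplying (c) by a small constant $\kappa_*=\kappa_*(\ka,\eps_0,\eta)$, the first term is absorbed into the dissipation manufactured in (b) (the time weights match exactly) and the second into the dissipation of (a).

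Adding (a), (b) and $\kappa_*$ times (c), integrating over $[0,t]$ and using \eqref{apX}, the closure goes as follows. The data terms are $\lesssim Y_0^2$. The term $\int_0^t\frac{\de}{(1+s)^{1+\vth}}\CD_{N_1,\ell_1,\la_0}\,ds$ is absorbed into $\kappa_*\ka\int_0^t(1+s)^{-\frac{1+\eps_0}{2}}\CD_{N_1,\ell_1,\la_0}\,ds$, since $1+\vth\ge\frac{1+\eps_0}{2}$ and $\de$ is taken much smaller than $\kappa_*\ka$. Each surviving nonlinear term carries a factor $\CE_{N_0,\ell_0-1,\la_0}(s)\lesssim X(t)(1+s)^{-3/2}$, so
\begin{equation*}
\int_0^t\CE_{N_1}(s)\CE_{N_0,\ell_0-1,\la_0}(s)\,ds\lesssim X(t)\sup_{0\le s\le t}\CE_{N_1}(s)\lesssim X^2(t),
\end{equation*}
and likewise $\int_0^t(1+s)^{-\frac{1+\eps_0}{2}}\CE_{N_1,\ell_1,\la_0}(s)\CE_{N_0,\ell_0-1,\la_0}(s)\,ds\lesssim X(t)\sup_{0\le s\le t}[(1+s)^{-\frac{1+\eps_0}{2}}\CE_{N_1,\ell_1,\la_0}(s)]\lesssim X^2(t)$. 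Retaining $\ka\int_0^t\CD_{N_1}\,ds$ on the left and taking the supremum over $t$ (using that $X(t)$ is nondecreasing) yields \eqref{ce1}.

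The step I expect to be the main obstacle is exactly the $J_{N_1}$ estimate: one must pay the top‑order field derivative $\na_x^{N_1}E$ — for which neither dissipation nor any decay from $X(t)$ is available — off against the weak dissipation $\frac{\eps_0}{(1+t)^{1+\eps_0}}\CE_{N_1}$ of (b), while the companion factor must land on the time‑weight‑free dissipation $\CD_{N_1}$ of (a), a time‑decaying dissipation being of no use here. It is precisely this balancing that dictates both the weight $(1+t)^{-\frac{1+\eps_0}{2}}$ chosen in step (c) and the growth rate $(1+t)^{\frac{1+\eps_0}{2}}$ allotted to $\CE_{N_1,\ell_1,\la_0}$ in the definition of $X(t)$.
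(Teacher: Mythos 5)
Your proposal is correct and follows essentially the same route as the paper: the unweighted and $(1+t)^{-\eps_0}$-weighted versions of Lemma \ref{lem.n1} to manufacture the degenerate dissipation $\eps_0(1+t)^{-1-\eps_0}\CE_{N_1}(t)$ for the top-order fields, the $(1+t)^{-\frac{1+\eps_0}{2}}$-weighted version of Lemma \ref{lem.n1w}, Cauchy--Schwarz on the $N_1$th-order term $\lag\pa^\al E\cdot\xi\mu^{1/2},w_{-\ell_1,\la_0}^2\pa^\al f\rag$ splitting it between that degenerate dissipation and $\CD_{N_1}$, and closure via the decay of $\CE_{N_0,\ell_0-1,\la_0}$ encoded in $X(t)$. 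The only (immaterial) differences are that you assemble a single master differential inequality before integrating, whereas the paper integrates each piece and then combines, and your parity observation that $Pf$ drops out of $J_{N_1}$ is a refinement the paper does not need, since $\|\lag\xi\rag^{\frac{\ga+2}{2}}\pa^\al f\|^2$ with $|\al|=N_1$ is already controlled by $\CD_{N_1}$ through the macroscopic dissipation.
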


\begin{proof}
In fact, the time integration of \eqref{lem.n1.1} gives
\begin{multline}
\CE_{N_1}(t)+\int_0^t\CD_{N_1}(s)\,ds\lesssim Y_0^2+ \de \int_0^t(1+s)^{-1-\vth}\CD_{N_1,\ell_1,\la_0}(s)\,ds\\
+\int_0^t\CE_{N_1}(s)\CE_{N_0,\ell_0-1,\la_0}(s)\,ds.
\label{ce1.p1}
\end{multline}
Furthermore, from multiplying \eqref{lem.n1.1} by $(1+t)^{-\eps_0}$ and then taking the time integration, it follows that
\begin{multline}
\dis (1+t)^{-\eps_0}\CE_{N_1}(t)+\int_0^t(1+s)^{-\eps_0}\CD_{N_1}(s)\,ds +\int_0^t(1+s)^{-1-\eps_0}\CE_{N_1}(s)\,ds\\
\dis \lesssim Y_0^2+\de \int_0^t (1+s)^{-1-\vth-\eps_0}\CD_{N_1,\ell_1,\la_0}(s)\,ds\\
\dis +\int_0^t (1+s)^{-\eps_0}\CE_{N_1}(s)\CE_{N_0,\ell_0-1,\la_0}(s)\,ds.
\label{ce1.p2}
\end{multline}
Combining \eqref{ce1.p1} and  \eqref{ce1.p2} gives
\begin{eqnarray}
&\dis \CE_{N_1}(t)+\int_0^t\CD_{N_1}(s)\,ds +\int_0^t(1+s)^{-1-\eps_0}\CE_{N_1}(s)\,ds\notag\\
&\dis \lesssim Y_0^2+\de \int_0^t (1+s)^{-1-\vth}\CD_{N_1,\ell_1,\la_0}(s)\,ds
+\int_0^t\CE_{N_1}(s)\CE_{N_0,\ell_0-1,\la_0}(s)\,ds\notag\\
&\dis \lesssim Y_0^2+X^2(t)+\de \int_0^t (1+s)^{-1-\vth}\CD_{N_1,\ell_1,\la_0}(s)\,ds,\label{ce1.p3}
\end{eqnarray}
where to obtain the second inequality, we have used
\begin{equation}
\notag
\sup_{0\leq s\leq t} \{\CE_{N_1}(s)+(1+s)^{\frac{3}{2}}\CE_{N_0,\ell_0-1,\la_0}(s)\}\leq X(t).
\end{equation}
From \eqref{lem.n1w.1}, multiplying it by $(1+t)^{-(1+\eps_0)/2}$ and taking the time integration yields
\begin{multline}
\label{ce1.p4}
(1+t)^{-\frac{1+\eps_0}{2}}\CE_{N_1,\ell_1,\la_0}(t)+\int_0^t(1+s)^{-\frac{1+\eps_0}{2}}\CD_{N_1,\ell_1,\la_0}(s)\,ds\\
+\int_0^t(1+s)^{-\frac{3+\eps_0}{2}}\CE_{N_1,\ell_1,\la_0}(s)\,ds \lesssim Y_0^2
+\int_0^t(1+s)^{-\frac{1+\eps_0}{2}}\CE_{N_1,\ell_1,\la_0}(s)\CE_{N_0,\ell_0-1,\la_0}(s)\,ds\\
+ \sum_{|\al|=N_1} \int_0^t (1+s)^{-\frac{1+\eps_0}{2}}\lag \pa^\al E\cdot \xi \mu^{1/2},w_{-\ell_1,\la_0}^2 \pa^\al f\rag.
\end{multline}
The second term on the right is bounded by $X^2(t)$  by noticing again from the definition of $X(t)$
\begin{equation}
\notag
\sup_{0\leq s\leq t} \{(1+s)^{-\frac{1+\eps_0}{2}}\CE_{N_1,\ell_1,\la_0}(s)+(1+s)^{\frac{3}{2}}\CE_{N_0,\ell_0-1,\la_0}(s)\}\leq X(t).
\end{equation}
By the Cauchy-Schwarz inequality, the right-hand third term of \eqref{ce1.p4} is bounded up to a generic constant by
\begin{multline}
\notag
\dis \sum_{|\al|=N_1}\int_0^t (1+s)^{-1-\eps_0} \|\pa^\al E\|^2+\|\lag \xi\rag^{\frac{\ga+2}{2}}\pa^\al f\|^2\,ds\\
\dis \lesssim \int_0^t (1+s)^{-1-\eps_0}\CE_{N_1}(s)\,ds +\int_0^t \CD_{N_1}(s)\,ds.
\end{multline}
Then,  in terms of the above estimates, taking the sum of \eqref{ce1.p3} and \eqref{ce1.p4} and using the fact that $\de>0$ is small enough, we arrive at
\begin{multline}
\label{ce1.p5}
\CE_{N_1}(t)+
(1+t)^{-\frac{1+\eps_0}{2}}\CE_{N_1,\ell_1,\la_0}(t)+\int_0^t\CD_{N_1}(s)\,ds \\
+\int_0^t(1+s)^{-\frac{1+\eps_0}{2}}\CD_{N_1,\ell_1,\la_0}(s)\,ds
+\int_0^t(1+s)^{-1-\eps_0}\CE_{N_1}(s)\,ds\\
+\int_0^t(1+s)^{-\frac{3+\eps_0}{2}}\CE_{N_1,\ell_1,\la_0}(s)\,ds \lesssim Y_0^2+X^2(t).
\end{multline}
Therefore, \eqref{ce1} follows, and then this completes the proof of Lemma \ref{lem.ce1}.
\end{proof}

\subsection{Lyapunov inequality for $\CE_{N_0,\ell_0,\la_0}(t)$} In this section we devote ourselves to obtaining the Lyapunov inequality for $\CE_{N_0,\ell_0,\la_0}(t)$. It is simpler compared to deal with the high-order energy functional $\CE_{N_1,\ell_1,\la_0}(t)$, because the derivatives of the electromagnetic field of order up to $N_0$ decays in time by the definition of $X(t)$.

\begin{lemma}\label{lem.n0w}
For any $\ell$ with $\ell_0-1\leq \ell \leq \ell_0$, there is an energy functional $\CE_{N_0,\ell,\la_0}(t)$ such that 
  \begin{equation}\label{lem.n0w.1}
    \frac{d}{dt}\CE_{N_0,\ell,\la_0}(t)+\CD_{N_0,\ell,\la_0}(t)\lesssim \sum_{|\al|=N_0} \|\pa^\al E\|^2,
  \end{equation}
for $0\leq t\leq T$.

\end{lemma}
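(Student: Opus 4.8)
The strategy mirrors the proof of Lemma \ref{lem.n1w}, but with the simplification that now all electromagnetic derivatives of order up to $N_0$ are controlled by $\CE_{N_0,\ell,\la_0}(t)$ itself or decay in time through the definition of $X(t)$. First I would run the weighted energy estimates at level $N_0$: acting $\pa^\al$ with $1\le|\al|\le N_0$ on the first equation of \eqref{ns} and multiplying by $w_{-\ell,\la_0}^2\pa^\al f$; acting $\{I-P\}$ and then $\pa_\be^\al$ with $|\al|+|\be|\le N_0$, $|\be|\ge 1$, on the micro equation \eqref{ns.mic} and multiplying by $w_{|\be|-\ell,\la_0}^2\pa_\be^\al\{I-P\}f$; and keeping the zero-order identities \eqref{lem.n1.p1}. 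As in \eqref{lem.n1w.p1}--\eqref{lem.n1w.p3}, the time-differentiation of the exponential factor in $w_{|\be|-\ell,\la_0}$ produces the good term $\frac{\vth\la_0}{(1+t)^{1+\vth}}\|\lag\xi\rag\pa_\be^\al\{I-P\}f\|_{|\be|-\ell,\la_0}^2$, which is exactly the last term of $\CD_{N_0,\ell,\la_0}(t)$ in \eqref{def.dr}. Lemma \ref{lem.LGa} converts $\lag\pa_\be^\al Lf,w_{|\be|-\ell,\la_0}^2\pa_\be^\al\{I-P\}f\rag$ into $\ka|\pa_\be^\al\{I-P\}f|_{\FD,|\be|-\ell,\la_0}^2$ plus lower-order and $|\xi|\le 2C$ error terms, which are absorbed by induction on $|\be|$ and by the macro dissipation.

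Next I would invoke Lemma \ref{lem.mad} at $N=N_0$ to supply the macro part $\CD_{N_0,mac}(t)$ (capturing $\|\na_x\pa^\al(a_\pm,b,c)\|^2$, $\|a_+-a_-\|^2$, $\|E\|_{H^{N_0-1}}^2$, $\|\na_x B\|_{H^{N_0-2}}^2$), with the $\CE_{N_0}\CD_{N_0}$ error absorbed using \eqref{apX}. A proper linear combination of all these identities then yields, for a suitable energy functional $\CE_{N_0,\ell,\la_0}(t)$ of the form \eqref{def.e}, an inequality
\begin{equation}
\notag
\frac{d}{dt}\CE_{N_0,\ell,\la_0}(t)+\ka\CD_{N_0,\ell,\la_0}(t)\lesssim \CI_{N_0,\ell,\la_0}(t)+\sum_{|\al|=N_0}\lag\pa^\al E\cdot\xi\mu^{1/2},w_{-\ell,\la_0}^2\pa^\al f\rag,
\end{equation}
where $\CI_{N_0,\ell,\la_0}(t)$ collects all the nonlinear inner products built from $S$ in \eqref{def.S}. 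The crucial difference from Lemma \ref{lem.n1w} is that the top-order linear source term $\pa^\al E\cdot\xi\mu^{1/2}$ with $|\al|=N_0$ is now controlled: by Cauchy-Schwarz it is bounded by $\eta\|\lag\xi\rag^{\frac{\ga+2}{2}}\pa^\al f\|^2+C_\eta\|\pa^\al E\|^2$, the first term absorbed into $\ka\CD_{N_0,\ell,\la_0}(t)$ (note $|\ga+2|$ integrability is consistent with the weight since $\ell-3N_0$ is large) and the second kept as the stated right-hand side $\sum_{|\al|=N_0}\|\pa^\al E\|^2$.

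For $\CI_{N_0,\ell,\la_0}(t)$, the $\Ga(f,f)$ contributions are handled exactly as in the proof of \eqref{lem.n1w.p5} via \eqref{est.ga}, giving a bound by $\de\,\CD_{N_0,\ell,\la_0}(t)$ after using $X(t)\le\de^2$. The velocity-growth terms $\frac{1}{2}q_0E\cdot\xi f$ and $q_0(E+\xi\times B)\cdot\na_\xi f$ are now genuinely easier: every factor of $(E,B)$ appearing can be estimated in $L^\infty_x$ via the Sobolev embedding using only derivatives of order $\le N_0$, and by \eqref{def.X} one has $\|\na_x(E,B)\|_{H^{N_0-1}}\lesssim (1+t)^{-1-\vth}X^{1/2}(t)\le\de(1+t)^{-1-\vth}$, so the extra velocity weight $\lag\xi\rag$ (or $\lag\xi\rag^{1-(\ga+2)}$ after the $\na_\xi$ hits) is exactly absorbed by the good term $\frac{\la_0\vth}{(1+t)^{1+\vth}}\|\lag\xi\rag\pa_\be^\al\{I-P\}f\|^2$ inside $\CD_{N_0,\ell,\la_0}(t)$, as was done for the first term of \eqref{i2.p1}. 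When the $(E,B)$-factor carries the top derivative one pairs it in $L^2_x$ against an $L^\infty_x$ norm of a lower-order $f$-factor, producing $\de\,\CD_{N_0,\ell,\la_0}(t)$; the loss of one algebraic weight power here is harmless since the lemma only asks for $\ell\le\ell_0$ while the low-order functional can afford weight $\ell_0$. Collecting everything, all terms except $C\sum_{|\al|=N_0}\|\pa^\al E\|^2$ are absorbed into $\ka\CD_{N_0,\ell,\la_0}(t)$ by taking $\de$ small, which gives \eqref{lem.n0w.1}. \textbf{The main obstacle} is purely bookkeeping: organizing the linear combination of the several weighted hierarchies so that the induction on $|\be|$ closes and the $|\xi|\le 2C$ error terms from Lemma \ref{lem.LGa} are genuinely dominated by $\CD_{N_0,mac}(t)$ together with the $\FD$-norm of $\{I-P\}f$; there is no new analytic difficulty beyond what already appears in Lemma \ref{lem.n1w}.
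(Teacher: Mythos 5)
Your proposal follows essentially the same route as the paper: the same weighted energy hierarchy at level $N_0$ (zero-order, $\pa^\al$, and $\pa_\be^\al\{I-P\}$ estimates combined with the macro dissipation), the same Cauchy--Schwarz treatment of the top-order linear term $\lag\pa^\al E\cdot\xi\mu^{1/2},w_{-\ell,\la_0}^2\pa^\al f\rag$ that leaves $\sum_{|\al|=N_0}\|\pa^\al E\|^2$ on the right, and the same use of the decay $\|\na_x(E,B)\|_{H^{N_0-1}}\lesssim\de(1+t)^{-1-\vth}$ from $X(t)$ together with the good dissipation term $\frac{\la_0\vth}{(1+t)^{1+\vth}}\|\lag\xi\rag\,\cdot\,\|^2$ to absorb the velocity growth. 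The only small inaccuracy is the aside that a loss of ``one algebraic weight power'' absorbs the factor $|\xi|$ when $(E,B)$ carries the top derivative --- for $-3<\ga<-2$ a single unit $\lag\xi\rag^{|\ga+2|}$ does not dominate $|\xi|$ --- but this is immaterial since the time-decay/good-term mechanism you already invoke (and which the paper uses) covers that case.
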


\begin{proof}
From the completely same procedure to obtain the energy inequality \eqref{lem.n1w.p4} for $\CE_{N_1,\ell_1,\la_0}(t)$, one can also verify that 
for $\ell$ with $\ell_0-1\leq \ell \leq \ell_0$, there is an energy functional $\CE_{N_0,\ell,\la_0}(t)$ satisfying \eqref{def.e} such that 
\begin{equation}
\label{lem.n0w.p1}
\frac{d}{dt}\CE_{N_0,\ell,\la_0}(t) +\ka \CD_{N_0,\ell,\la_0}(t)\lesssim \CI^{(2)}_{N_0,\ell,\la_0}(t) 
+ \sum_{ |\al|= N_0} \lag\pa^\al E\cdot \xi \mu^{1/2},w_{-\ell,\la_0}^2\pa^\al f \rag.
\end{equation}
where
\begin{eqnarray}
 \CI^{(2)}_{N_0,\ell,\la_0}(t) &=&\lag S, f\rag + \sum_{1\leq |\al|\leq N_0}\lag \pa^\al S,\pa^\al f\rag \notag\\
&& +\lag \{I-P\}S, w_{-\ell,\la_0}^2\{I-P\}f\rag+\sum_{1\leq |\al|\leq N_0} \lag \pa^\al S,w_{-\ell,\la_0}^2\pa^\al f \rag\notag\\
&& +\sum_{m=1}^{N_0}C_m\sum_{\substack{|\be|= m \\
|\al|+|\be|\leq N_0 }}\lag \pa^\al_\be \{I-P\}S,w_{|\be|-\ell,\la_0}^2\pa_\be^\al\{I-P\} f\rag.\label{def.i2n0}
\end{eqnarray}
Compared to the estimate on the similar functional  $ \CI^{(2)}_{N_1,\ell_1,\la_0}(t)$ given by \eqref{def.i2} in Lemma \ref{lem.n1w}, the estimate on $ \CI^{(2)}_{N_0,\ell,\la_0}(t)$ above becomes easier due to the boundedness of $\CE_{N_0,\ell,\la_0}(t)$ uniformly in time and also the time-decay of derivatives of the electromagnetic field up to order $N_0$. In fact, one can claim that
\begin{multline}
\label{lem.n0w.p2}
\CI^{(2)}_{N_0,\ell,\la_0}(t)\lesssim \CE_{N_0,\ell,\la_0}^{1/2}(t) \CD_{N_0,\ell,\la_0}(t) \\
+\frac{1}{\la_0} (1+t)^{1+\vth}\|\na_x (E,B)\|_{H^{N_0-1}}\CD_{N_0,\ell,\la_0}(t).
\end{multline}
Noticing from the definition of $X(t)$
\begin{equation}
\notag
\sup_{0\leq s\leq t}\{\CE_{N_0,\ell_0,\la_0}(s)+ (1+s)^{2(1+\vth)}\|\na_x (E,B)\|_{H^{N_0-1}}^2\}\leq X(t)\leq \de^2,
\end{equation}
it further follows that $\CI^{(2)}_{N_0,\ell,\la_0}(t)$ is bounded up to a generic constant by $ \de\CD_{N_0,\ell,\la_0}(t)$. By putting this estimate into \eqref{lem.n0w.p1} and applying the Cauchy-Schwarz inequality to the right-hand second term of  \eqref{lem.n0w.p1} as
\begin{equation}
\notag
 \sum_{ |\al|= N_0} \lag\pa^\al E\cdot \xi \mu^{1/2},w_{-\ell,\la_0}^2\pa^\al f \rag\leq \eta  \sum_{ |\al|= N_0}\|\lag \xi\rag^{\frac{\ga+2}{2}}\pa^\al f\|^2+\frac{C}{\eta}  \sum_{ |\al|= N_0}\|\pa^\al E\|^2,
\end{equation}
for $\eta>0$ small enough, it follows that
\begin{equation}
\notag
\frac{d}{dt}\CE_{N_0,\ell,\la_0}(t) +\ka \CD_{N_0,\ell,\la_0}(t)\lesssim (\de+\eta)  \CD_{N_0,\ell,\la_0}(t)+\frac{C}{\eta}  \sum_{ |\al|= N_0}\|\pa^\al E\|^2,
\end{equation}
which thus implies \eqref{lem.n0w.1} since $\de>0$ and $\eta>0$ can be small enough. This completes the proof of Lemma \ref{lem.n0w}.
\end{proof}

\begin{proof}[Proof of \eqref{lem.n0w.p2}]
It is similar to the proof of \eqref{lem.n1w.p5} for $\CI^{(2)}_{N_1,\ell_1,\la_0}(t)$. For those inner product terms from $\Ga(f,f)$ in \eqref{def.i2n0}, one can directly apply Lemma \ref{lem.LGa} to verify that they are bounded by $C \CE_{N_0,\ell,\la_0}^{1/2}(t) \CD_{N_0,\ell,\la_0}(t)$. For those terms related to $(E,B)$ in \eqref{def.i2n0}, from the completely same process as for dealing  in the proof of \eqref{lem.n1w.p5}  with the case when $(E,B)$ gains the differentiation of lower-order, it follows that they are bounded by $C\frac{1}{\la_0} (1+t)^{1+\vth}\|\na_x (E,B)\|_{H^{N_0-1}}\CD_{N_0,\ell,\la_0}(t)$. Therefore,  \eqref{lem.n0w.p2} is proved.
\end{proof}

\subsection{Decay of electromagnetic fields and macro components}

In this step, we will use directly the Duhamel's principle to obtain the time-decay  of the electromagnetic field $(E,B)$ and the macro components $(a,b,c)$ up to the low-order $N_0$ in terms of the time-decay of the weighted high-order energy function $\CE_{N_1-3,\ell_1-1,\la_0}(t)$ which follows from the boundedness of $X(t)$.

\begin{lemma}\label{lem.emd}
It holds that
\begin{equation}
\label{lem.emd.1}
\sup_{0\leq s\leq t}\{(1+s)^{\frac{5}{2}}\|\na_x (E,B)\|_{H^{N_0-1}}^2+(1+s)^{\frac{3}{2}}\|(a,b,c,E,B)\|^2\}\lesssim Y_0^2+X^2(t),
\end{equation}
for $0\leq t\leq T$.
\end{lemma}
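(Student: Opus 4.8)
The plan is to run a Duhamel (variation-of-constants) argument built on the linear decay estimate of Theorem~\ref{thm.lb}. Writing $U=[f,E,B]$ and recalling the nonlinear term $S$ from \eqref{def.S}, the solution of \eqref{ns} obeys
\begin{equation}
\notag
U(t)=\semiG(t)U_0+\int_0^t \semiG(t-s)[S(s),0,0]\,ds,
\end{equation}
with $\semiG(t)$ the linear solution operator of Section~\ref{sec3}. The macro computation in Section~\ref{sec2} yields $\lag\mu^{1/2},S_+-S_-\rag=0$, so each slice $[S(s),0,0]$ is an admissible datum and Theorem~\ref{thm.lb} applies to every term. First I would fix the parameters in \eqref{thm.lb.1}. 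For $\|\na_x(E,B)\|_{H^{N_0-1}}$ one needs $\pa^\al(E,B)$ with $1\le|\al|\le N_0$, so the low-frequency rate is $\si_m\ge\si_1=5/4$; for $\|(a,b,c,E,B)\|$ one uses $m=0$, $\si_0=3/4$, after bounding $\|(a,b,c)\|\lesssim\|f\|$ by Cauchy--Schwarz in $\xi$. In the high-frequency part I would take an integer $j\ge2$, so that $(1+t-s)^{-j}$ is integrable and decays faster than $5/4$; this costs $j+1$ extra $x$-derivatives on the datum, which is affordable since $N_0+j+1\le N_1=\tfrac32N_0$ for $N_0$ large. With the weights $\ell_\ast^{low}$, $\ell_\ast^{high}$ chosen below the thresholds $2\ell_2$ and $N_1$ that $\ell_2>\tfrac54+\tfrac{N_0}2$ and $N_1$ provide, the \emph{data} contributions on the right of \eqref{thm.lb.1} are all $\lesssim Y_0$.

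The heart of the argument is then the time-decay of the nonlinear source, i.e.\ bounds for $\|w^{\ell_\ast^{low}}S(s)\|_{Z_1}$ and $\|w^{\ell_\ast^{high}}\na_x^{j+1}\pa^\al S(s)\|$. Since $S=\Ga(f,f)+\tfrac12q_0E\cdot\xi f-q_0(E+\xi\times B)\cdot\na_\xi f$ is quadratic, a Leibniz expansion puts one factor in each product at spatial order $\le N_1/2\le N_1-2$; by the Sobolev embedding $\|g\|_{L^\infty_x}\lesssim\|\na_xg\|_{H^1}$ and the decay built into $X(t)$ --- in particular $(1+s)^{3/2}\CE_{N_1-2}(s)\le X(t)$ and $(1+s)^{3/2}\CE_{N_1-3,\ell_1-1,\la_0}(s)\le X(t)$ --- that factor is $O((1+s)^{-3/4}X(t)^{1/2})$ in $L^\infty_xL^2_\xi$ with the required algebraic (and, where necessary, exponential) velocity weight. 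For $N_0$ large the complementary factor also has order $\le N_1-2$ and contributes another $(1+s)^{-3/4}X(t)^{1/2}$ in $L^2_{x,\xi}$. Using $\|fg\|_{L^1_x}\le\|f\|_{L^2_x}\|g\|_{L^2_x}$ for the $Z_1$ norm, and absorbing the $\lag\xi\rag^{1-(\ga+2)}$ velocity growth coming from $E\cdot\xi f$ and $(E+\xi\times B)\cdot\na_\xi f$ into the margins $\ell_1$ and $\ell_0-3N_0$, I expect
\begin{equation}
\notag
\|w^{\ell_\ast^{low}}S(s)\|_{Z_1}+\sum_{|\al|\le N_0}\|w^{\ell_\ast^{high}}\na_x^{j+1}\pa^\al S(s)\|\lesssim(1+s)^{-3/2}X(t).
\end{equation}

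The conclusion is then a convolution estimate. Inserting the source bound into the Duhamel integral and splitting $\int_0^t=\int_0^{t/2}+\int_{t/2}^t$, the elementary inequality $\int_0^t(1+t-s)^{-a}(1+s)^{-3/2}\,ds\lesssim(1+t)^{-\min\{a,3/2\}}$ (handled on the two halves separately, valid also for $a<1$) gives $\|\na_x(E,B)(t)\|_{H^{N_0-1}}\lesssim(1+t)^{-5/4}(Y_0+X(t))$ (using $\si_m\ge5/4$ and $j\ge2$) and $\|(a,b,c,E,B)(t)\|\lesssim(1+t)^{-3/4}(Y_0+X(t))$ (using $\si_0=3/4$). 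Squaring and multiplying by the respective powers of $1+t$ then produces \eqref{lem.emd.1}.

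The hard part will be the interplay between time decay and the regularity loss. The top-order derivatives of $(E,B)$ do not decay, and the top-order weighted energy $\CE_{N_1,\ell_1,\la_0}$ even grows, so the plan must (a) estimate only low-order ($\le N_0$) derivatives of $(E,B)$ through Duhamel, and (b) exploit the regularity margin $N_1=\tfrac32N_0$ together with $\ell_0-3N_0$ large so that the source --- even after the $j+1$ extra $x$-derivatives demanded by the high-frequency part of Theorem~\ref{thm.lb} and the extra $\lag\xi\rag$-weights from the field terms --- remains inside the range of the \emph{decaying} functionals $\CE_{N_1-2}$ and $\CE_{N_1-3,\ell_1-1,\la_0}$ rather than the merely bounded $\CE_{N_1}$ or the growing $\CE_{N_1,\ell_1,\la_0}$. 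Matching the weight exponents --- both the algebraic powers tied to $\ga+2$ and the presence or absence of the time-exponential factor $\exp\{\la_0\lag\xi\rag^2/(1+t)^\vth\}$ against the purely algebraic weight $w=\lag\xi\rag^{-(\ga+2)/2}$ of Theorem~\ref{thm.lb} --- is the delicate bookkeeping, and is exactly what the parameter hierarchy in \eqref{def.X} was arranged to accommodate.
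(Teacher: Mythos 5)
Your proposal is correct and follows essentially the same route as the paper: Duhamel's formula with the linear decay of Theorem \ref{thm.lb} applied at rates $5/4$ (for $\na_x(E,B)$ in $H^{N_0-1}$) and $3/4$ (for the zero-order part), a $(1+s)^{-3/2}$ bound on the weighted source norms via the decaying functionals in $X(t)$, and the standard convolution inequality. The only cosmetic difference is that the paper takes the fractional high-frequency exponent $j=5/4$ (resp.\ $3/4$) and removes the resulting fractional derivative $\na_x^{9/4}$ by interpolation, whereas you take an integer $j\geq 2$ at the cost of a few more (affordable) $x$-derivatives.
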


\begin{proof}
Recall the mild form
\begin{equation}
U(t)=\semiG(t)U_0+\int_0^t\semiG(t-s)[\sourceG(s),0,0]\,ds,\label{lem.emd.p1}
\end{equation}
which denotes the solutions to  the Cauchy problem on the Vlasov-Maxwell-Landau system \eqref{ns} with initial data $U_0=(f_0,E_0,B_0)$, where the nonlinear term $S$ is given by \eqref{def.S}. The linearized analysis for the homogeneous system in Theorem \ref{thm.lb} implies
\begin{multline}
\notag
 \|\na_x P_{E,B}\{\semiG(t)U_0\}\|_{H^{N_0-1}} \lesssim (1+t)^{-\frac{5}{4}} (\|w^{\ell_3^{low}} f_0\|_{Z_1}+\|(E_0,B_0)\|_{L^1_x})\\
 + (1+t)^{-\frac{5}{4}} \sum_{1\leq |\al|\leq N_0} (\|w^{\ell_3^{high}} \na_x^{\frac{9}{4}}\pa^\al f_0\|+\|\na_x^{\frac{9}{4}} \pa^\al (E_0,B_0)\|),
\end{multline}
where $P_{E,B}$ means the projection along the electro and magnetic components  in the solution $(f,E,B)$, $w=w(\xi)$ is defined by \eqref{def.wli}, and  constants $\ell_3^{low}$, $\ell_3^{high}$ are chosen to satisfy
\begin{equation}
\notag
\ell_3^{low}>\frac{5}{2} +N_0,\quad \ell_3^{high}>\frac{5}{4},
\end{equation}
and also $\ell_3^{low}$, $\ell_3^{high}$ are sufficiently close to $3/2+N_0$ and $5/4$, respectively.
By interpolation of derivatives,
\begin{multline}
\notag
 \|\na_x P_{E,B}\{\semiG(t)U_0\}\|_{H^{N_0-1}} \lesssim (1+t)^{-\frac{5}{4}} (\|w^{\ell_3^{low}} f_0\|_{Z_1}+\|(E_0,B_0)\|_{L^1_x})\\
+ (1+t)^{-\frac{5}{4}} \sum_{3\leq |\al|\leq N_0+3} (\|w^{\ell_3^{high}} \pa^\al f_0\|+\| \pa^\al (E_0,B_0)\|).
\end{multline}
Applying this time-decay property to the mild form \eqref{lem.emd.p1} gives
\begin{multline}
\label{lem.emd.p2}
 \|\na_x (E,B)\|_{H^{N_0-1}}
\lesssim (1+t)^{-\frac{5}{4}}Y_0
+\int_0^t(1+t-s)^{-\frac{5}{4}} \|w^{\ell_3^{low}} S(s)\|_{Z_1}\,ds\\
 + \int_0^t(1+t-s)^{-\frac{5}{4}} \sum_{3\leq |\al|\leq N_0+3} \|w^{\ell_3^{high}} \pa^\al S(s)\|\,ds,
\end{multline}
where we have used $w(\xi)=w_{-\frac{1}{2}} (\xi)$ and the definition \eqref{def.Y0} for $Y_0$.
As in \cite{DYZ-VPL}, it is straightforward to obtain
\begin{equation}
\notag
 \|w^{\ell_3^{low}} S(t)\|_{Z_1}+\sum_{3\leq |\al|\leq N_0+3} \|w^{\ell_3^{high}} \pa^\al S(t)\|\lesssim \CE_{N_1-3,\ell_1-1,\la_0}(t).
\end{equation}
Here, we have used the choice of $N_1$, $\ell_1$ by $N_1=\frac{3}{2}N_0$, $\ell_1=\frac{1}{2}\ell_0$ with $N_0$ and $\ell_0$ properly large.
Recall $X(t)$ norm, and hence
\begin{equation}
\notag
 \CE_{N_1-3,\ell_1-1,\la_0}(s)\leq (1+s)^{-\frac{3}{2}}X(t),\quad 0\leq s\leq t.
\end{equation}
Plugging these estimates into \eqref{lem.emd.p2}, the further computations yield
\begin{equation}
\label{lem.emd.p3}
 \sup_{0\leq s\leq t} \{(1+s)^{\frac{5}{2}} \|\na_x (E,B)\|_{H^{N_0-1}}^2\}\lesssim Y_0^2 +X^2(t).
\end{equation}
Moreover, to obtain the time-decay of $\|(a,b,c,E,B)\|$, we use the linearized time-decay property
\begin{multline}
\notag
\|P_f\{\semiG(t)U_0\}\|+\|P_{E,B}\{\semiG(t)U_0\}\| \lesssim (1+t)^{-\frac{3}{4}} (\|w^{\ell_4^{low}} f_0\|_{Z_1}+\|(E_0,B_0)\|_{L^1_x})\\
 + (1+t)^{-\frac{3}{4}} (\|w^{\ell_4^{high}} \na_x^{\frac{7}{4}} f_0\|+\|\na_x^{\frac{7}{4}}  (E_0,B_0)\|),
\end{multline}
where $P_{f}$ means the projection along the  $f$-component in the solution $(f,E,B)$, and  constants $\ell_4^{low}$, $\ell_4^{high}$ are chosen to satisfy
$\ell_4^{low}>3/2$, $\ell_4^{high}>3/4$
and also $\ell_4^{low}$, $\ell_4^{high}$ are sufficiently close to $3/2$ and $3/4$, respectively. Therefore, in the completely same way for estimating $\|\na_x (E,B)\|_{H^{N_0-1}}$ in \eqref{lem.emd.p3}, one has
\begin{equation}
\label{lem.emd.p4}
 \sup_{0\leq s\leq t} \{(1+s)^{\frac{3}{2}} \|(a,b,c,E,B)\|^2\}\lesssim Y_0^2 +X^2(t).
\end{equation}
Thus, combining \eqref{lem.emd.p3} and \eqref{lem.emd.p4} gives the desired estimate \eqref{lem.emd.1}. This then completes the proof of Lemma \ref{lem.emd}.
\end{proof}

\subsection{Bound of $\CE_{N_0,\ell_0,\la_0}(t)$ and decay of $\CE_{N_0,\ell_0-1,\la_0}(t)$} 
Basing on those estimates in the previous two sections, we can use the time-weighted estimate together with an iterative trick to obtain in the following lemma the boudedness of $\CE_{N_0,\ell_0,\la_0}(t)$ and also the time-decay of $\CE_{N_0,\ell_0-1,\la_0}(t)$. Notice that loss of one in the velocity weight index of $\CE_{N_0,\ell_0-1,\la_0}(t)$ results essentially from the long-range degenerate property of soft potentials for the Landau operator.

\begin{lemma}\label{lem.n0wd}
It holds that
\begin{equation}
\label{lem.n0wd.1}
\sup_{0\leq s\leq t} \{\CE_{N_0,\ell_0,\la_0}(s)+(1+s)^{\frac{3}{2}}\CE_{N_0,\ell_0-1,\la_0}(s)\} \lesssim Y_0^2 +X^2(t),
\end{equation}
for $0\leq t\leq T$.
\end{lemma}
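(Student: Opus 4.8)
The plan is to bound the top-weight functional $\CE_{N_0,\ell_0,\la_0}(t)$ uniformly in time -- possible here, unlike for $\CE_{N_1,\ell_1,\la_0}(t)$ -- and then to deduce the decay of $\CE_{N_0,\ell_0-1,\la_0}(t)$ from it, paying along the way for the loss of one algebraic weight intrinsic to the soft Landau operator. Throughout write $\bar{Y}(t)=Y_0^2+X^2(t)$, which is nondecreasing and, by \eqref{apX} and \eqref{def.X}, small. For the bound, I would apply Lemma~\ref{lem.n0w} with $\ell=\ell_0$: since $\sum_{|\al|=N_0}\|\pa^\al E\|^2\leq\|\na_x(E,B)\|_{H^{N_0-1}}^2\lesssim(1+t)^{-5/2}\bar{Y}(t)$ by Lemma~\ref{lem.emd}, integrating \eqref{lem.n0w.1} in time gives
\begin{equation}
\notag
\CE_{N_0,\ell_0,\la_0}(t)+\ka\int_0^t\CD_{N_0,\ell_0,\la_0}(s)\,ds\lesssim Y_0^2+\int_0^t(1+s)^{-5/2}\bar{Y}(s)\,ds\lesssim\bar{Y}(t),
\end{equation}
so $\CE_{N_0,\ell_0,\la_0}(t)\lesssim\bar{Y}(t)$ and $\int_0^t\CD_{N_0,\ell_0,\la_0}(s)\,ds\lesssim\bar{Y}(t)$.

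Next I would isolate the weight mechanism. Denote by $\CE_{N_0,\ell,\la_0}^{mic}(t)$ and $\CE_{N_0}^{mac}(t)$ the microscopic (built from $\{I-P\}f$) and macroscopic (built from $Pf$ and $(E,B)$) parts of $\CE_{N_0,\ell,\la_0}(t)$, the latter being $\ell$-independent. Lemma~\ref{lem.emd} applied to $(a,b,c,E,B)$ and to $\|\na_x(E,B)\|_{H^{N_0-1}}$, together with the fact that $\CD_{N_0,mac}(t)\leq\CD_{N_0,\ell_0-1,\la_0}(t)$ (cf.~\eqref{def.dr}), gives $\CE_{N_0}^{mac}(t)\lesssim\CD_{N_0,\ell_0-1,\la_0}(t)+(1+t)^{-3/2}\bar{Y}(t)$. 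For the microscopic part, Lemma~\ref{lem.Ld} exhibits the factor $(1+|\xi|)^{\ga+2}$ inside $|\cdot|_{\FD,\tau,\la}^2$; since $\ga+2<0$ one has $(1+|\xi|)^{-2(\ga+2)}w_{\tau,\la}^2\sim w_{\tau-1,\la}^2$, and a pointwise H\"older inequality in $\xi$ (exponents $3/2$ and $3$) then yields, term by term, both $\CD_{N_0,\ell_0,\la_0}(t)\gtrsim\CE_{N_0,\ell_0-1,\la_0}^{mic}(t)$ and
\begin{equation}
\notag
\CE_{N_0,\ell_0-1,\la_0}^{mic}(t)\lesssim\big(\CD_{N_0,\ell_0-1,\la_0}(t)\big)^{2/3}\big(\CE_{N_0,\ell_0,\la_0}^{mic}(t)\big)^{1/3}.
\end{equation}
Using $\CE_{N_0,\ell_0,\la_0}^{mic}\leq\CE_{N_0,\ell_0,\la_0}\lesssim\bar{Y}(t)$, these combine with the macroscopic bound to the key interpolation
\begin{equation}
\label{lem.n0wd.interp}
\CE_{N_0,\ell_0-1,\la_0}(t)\lesssim\big(\CD_{N_0,\ell_0-1,\la_0}(t)\big)^{2/3}\bar{Y}(t)^{1/3}+\CD_{N_0,\ell_0-1,\la_0}(t)+(1+t)^{-3/2}\bar{Y}(t),
\end{equation}
while $\CD_{N_0,\ell_0,\la_0}\gtrsim\CE_{N_0,\ell_0-1,\la_0}^{mic}$ and $\CD_{N_0,mac}(t)\leq\CD_{N_0,\ell_0,\la_0}(t)$ already give $\int_0^t\CE_{N_0,\ell_0-1,\la_0}(s)\,ds\lesssim\bar{Y}(t)$.

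Finally I would run the iteration on the decay rate. Integrating \eqref{lem.n0w.1} with $\ell=\ell_0-1$ from $s$ to $t$ and bounding the right side by Lemma~\ref{lem.emd} yields the almost-monotonicity $\CE_{N_0,\ell_0-1,\la_0}(t)\leq\CE_{N_0,\ell_0-1,\la_0}(s)+C(1+s)^{-3/2}\bar{Y}(t)$ for $t\geq s$, and also $\int_s^{2s}\CD_{N_0,\ell_0-1,\la_0}(r)\,dr\lesssim\CE_{N_0,\ell_0-1,\la_0}(s)+(1+s)^{-3/2}\bar{Y}(t)$. Combining the almost-monotonicity with $\int_0^t\CE_{N_0,\ell_0-1,\la_0}\lesssim\bar{Y}(t)$ -- choosing $s_\ast\in[t/2,t]$ with $\CE_{N_0,\ell_0-1,\la_0}(s_\ast)\lesssim\bar{Y}(t)/t$ -- gives the crude rate $\CE_{N_0,\ell_0-1,\la_0}(t)\lesssim(1+t)^{-1}\bar{Y}(t)$. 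Then, given $\CE_{N_0,\ell_0-1,\la_0}(s)\lesssim(1+s)^{-\theta}\bar{Y}(t)$ with $1\leq\theta<3/2$, inserting the dyadic bound on $\int_s^{2s}\CD_{N_0,\ell_0-1,\la_0}$ into the time-average over $[s,2s]$ of \eqref{lem.n0wd.interp} (H\"older with exponents $3$ and $3/2$) and using almost-monotonicity once more gives $\CE_{N_0,\ell_0-1,\la_0}(2s)\lesssim\big((1+s)^{-(2/3+2\theta/3)}+(1+s)^{-3/2}\big)\bar{Y}(t)$, i.e.~the rate improves by $\theta\mapsto\min\{2/3+2\theta/3,\,3/2\}$. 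After finitely many steps this reaches $\CE_{N_0,\ell_0-1,\la_0}(t)\lesssim(1+t)^{-3/2}\bar{Y}(t)$, which with the first paragraph is \eqref{lem.n0wd.1}.

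The main obstacle is this iteration, and above all keeping the linear power $\bar{Y}(t)^{1}$. Degeneracy of the soft Landau operator means $\CD_{N_0,\ell_0-1,\la_0}$ does not control $\CE_{N_0,\ell_0-1,\la_0}$, so a direct Gronwall fails, and the integrating factor $\exp\{\int_0^t(1+s)^{-1-\vth}\,ds\}$ supplied by the $\la_0(1+t)^{-1-\vth}$-term of $\CD$ is too weak to force any polynomial decay. Using \eqref{lem.n0wd.interp} as a Riccati inequality alone would give only $\CE_{N_0,\ell_0-1,\la_0}(t)\lesssim\bar{Y}(t)^{2/3}(1+t)^{-5/3}$, whose power of the smallness parameter is insufficient to close the bootstrap for $X(t)$. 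The dyadic scheme is arranged so that at the target rate $3/2$ the genuinely nonlinear interpolation term is of lower order and the estimate is driven by the $(1+t)^{-3/2}\bar{Y}(t)$ term inherited from the electromagnetic decay of Lemma~\ref{lem.emd}; verifying this balance -- and that the number of steps and all constants stay independent of $t$ -- is the technical heart of the argument.
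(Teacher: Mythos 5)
Your proposal is correct in its overall architecture and reaches the right conclusion, but it takes a genuinely different route from the paper. The paper never interpolates with fractional H\"older exponents: it introduces the intermediate functional $\CE_{N_0,\ell_0-\frac{1}{2},\la_0}(t)$ and exploits that losing exactly half a unit of the weight index matches the factor $(1+|\xi|)^{\frac{\ga+2}{2}}$ supplied by the dissipation (Lemma \ref{lem.Ld}), so that $\CE_{N_0,\ell-\frac12,\la_0}\lesssim \CD_{N_0,\ell,\la_0}+\|(a,b,c,B)\|^2$ holds linearly. It then integrates \eqref{lem.n0w.1} three times, at weights $\ell_0$, $\ell_0-\frac12$, $\ell_0-1$ with time weights $1$, $(1+t)^{\frac12+\eps}$, $(1+t)^{\frac32+\eps}$, each step absorbing its lower-order energy integral into the previous step's dissipation integral and invoking Lemma \ref{lem.emd} for the undissipated terms $\|(a,b,c,B)\|^2$ and $\sum_{|\al|=N_0}\|\pa^\al(E,B)\|^2$. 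Your scheme replaces the half-integer ladder by the one-shot interpolation $\CE^{mic}_{N_0,\ell_0-1,\la_0}\lesssim(\CD_{N_0,\ell_0-1,\la_0})^{2/3}(\CE^{mic}_{N_0,\ell_0,\la_0})^{1/3}$ (which is correct: $\ell_0-1=\frac23(\ell_0-\frac32)+\frac13\ell_0$ and $\CD$ at weight $\ell$ controls the $L^2$ norm at weight $\ell-\frac12$), combined with almost-monotonicity and a dyadic time-averaging bootstrap of the pointwise rate $\theta\mapsto\min\{\frac23+\frac{2\theta}{3},\frac32\}$, which terminates in two steps starting from the crude rate $\theta=1$; the careful retention of the linear power $\bar Y(t)$ throughout is the right thing to insist on for closing the bootstrap on $X(t)$. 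What the paper's version buys is a purely linear argument that also delivers the space-time bounds $\int_0^t(1+s)^{\frac32+\eps}\CD_{N_0,\ell_0-1,\la_0}(s)\,ds\lesssim Y_0^2+X^2(t)$ for free and is recycled verbatim in Lemma \ref{lem.n1wd}; what yours buys is that no intermediate-weight Lyapunov inequality is needed. One small caveat: your parenthetical claim that a direct Riccati/ODE use of your interpolation would only give $\bar Y^{2/3}(1+t)^{-5/3}$ is not right --- inverting $\CE\lesssim\CD^{2/3}\bar Y^{1/3}$ gives $\CD\gtrsim\CE^{3/2}\bar Y^{-1/2}$ and the comparison ODE yields $\CE\lesssim\bar Y(1+t)^{-2}$ capped at $(1+t)^{-3/2}$ by the forcing, with the full power $\bar Y^{1}$ --- but this is a side remark and does not affect the validity of the argument you actually run.
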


\begin{proof}
Recall the Lyapunov inequality \eqref{lem.n0w.1} for $\CE_{N_0,\ell,\la_0}(t)$ with $\ell_0-1\leq \ell\leq \ell_0$. First of all, from the time integration of \eqref{lem.n0w.1} with $\ell=\ell_0$,
\begin{equation}\label{lem.n0wd.p1}
\CE_{N_0,\ell_0,\la_0}(t)+\int_0^t\CD_{N_0,\ell_0,\la_0}(s)\,ds \lesssim Y_0^2 + \sum_{|\al|=N_0}\int_0^t \|\pa^\al E\|^2\,ds.
\end{equation}
Due to Lemma \ref{lem.emd}, the second term on the right is bounded up to a generic constant by $Y_0^2+X^2(t)$, and so is $\CE_{N_0,\ell_0,\la_0}(s)$ for $0\leq s\leq t$. The rest is to estimate the time-weighted part on the left-hand side of \eqref{lem.n0wd.1}. For that, let $\eps>0$ be fixed small enough. From multiplying \eqref{lem.n0w.1} with $\ell=\ell_0-1/2$ by $(1+t)^{\frac{1}{2}+\eps}$ and then taking the time integration,
\begin{multline}
\label{lem.n0wd.p2}
 (1+t)^{\frac{1}{2}+\eps}\CE_{N_0,\ell_0-\frac{1}{2},\la_0}(t)+\int_0^t(1+s)^{\frac{1}{2}+\eps} \CD_{N_0,\ell_0-\frac{1}{2},\la_0}(s)\,ds
 \lesssim Y_0^2\\+\int_0^t(1+s)^{-\frac{1}{2}+\eps} \CE_{N_0,\ell_0-\frac{1}{2},\la_0}(s)\,ds
 + \sum_{|\al|=N_0}\int_0^t(1+s)^{\frac{1}{2}+\eps}\|\pa^\al E\|^2\,ds,
\end{multline}
and in a similar way, starting from \eqref{lem.n0w.1} with $\ell=\ell_0-1$,
\begin{multline}
\label{lem.n0wd.p3}
 (1+t)^{\frac{3}{2}+\eps}\CE_{N_0,\ell_0-1,\la_0}(t) +\int_0^t(1+s)^{\frac{3}{2}+\eps} \CD_{N_0,\ell_0-1,\la_0}(s)\,ds
 \lesssim Y_0^2\\+\int_0^t(1+s)^{\frac{1}{2}+\eps} \CE_{N_0,\ell_0-1,\la_0}(s)\,ds
 + \sum_{|\al|=N_0}\int_0^t(1+s)^{\frac{3}{2}+\eps}\|\pa^\al E\|^2\,ds.
\end{multline}
Using the relation between the energy functional $\CE_{N,\ell,\la}(t)$ and its dissipation rate $\CD_{N,\ell,\la}(t)$, the proper linear combination of \eqref{lem.n0wd.p1}, \eqref{lem.n0wd.p2} and \eqref{lem.n0wd.p3} yields
\begin{multline}
\label{lem.n0wd.p4}
(1+t)^{\frac{3}{2}+\eps}\CE_{N_0,\ell_0-1,\la_0}(t)
+\int_0^t\CD_{N_0,\ell_0,\la_0}(s)+(1+s)^{\frac{3}{2}+\eps} \CD_{N_0,\ell_0-1,\la_0}(s)\,ds\\
 \lesssim Y_0^2
 +\int_0^t(1+s)^{\frac{1}{2}+\eps} \|(a,b,c,B)\|^2\,ds\\
 + \sum_{|\al|=N_0}\int_0^t(1+s)^{\frac{3}{2}+\eps}\|\pa^\al (E,B)\|^2\,ds.
\end{multline}
Again from Lemma \ref{lem.emd}, one has
\begin{multline}
\notag
\int_0^t(1+s)^{\frac{1}{2}+\eps} \|(a,b,c,B)\|^2\,ds
 + \sum_{|\al|=N_0}\int_0^t(1+s)^{\frac{3}{2}+\eps}\|\pa^\al (E,B)\|^2\,ds\\
 \lesssim \int_0^t (1+s)^{-1+\eps}\,ds\,[Y_0^2+X^2(t)] \lesssim (1+t)^{\eps}[Y_0^2+X^2(t)].
\end{multline}
Using this, it follows from \eqref{lem.n0wd.p4} that
\begin{equation}
\notag
\sup_{0\leq s\leq t}\{(1+s)^{\frac{3}{2}}\CE_{N_0,\ell_0-1,\la_0}(s)\}\lesssim Y_0^2+X^2(t).
\end{equation}
Therefore \eqref{lem.n0wd.1} holds true. This completes the proof of Lemma \ref{lem.n0wd}.
\end{proof}

\subsection{Bound of $\CE_{N_1-1,\ell_1,\la_0}(t)$}

In this section we obtain the uniform-in-time boundedness of the energy functional $\CE_{N_1-1,\ell_1,\la_0}(t)$. Notice that this is consistent with \eqref{ad.p1} in the linearized analysis. The main observation in the nonlinear analysis is that those remaining terms in the energy inequalities are time-space integrable.

\begin{lemma}\label{lem.n1bd}
It holds that
\begin{equation}
\label{lem.n1bd.1}
\sup_{0\leq s\leq t} \CE_{N_1-1,\ell_1,\la_0}(s)+\int_0^t \CD_{N_1-1,\ell_1,\la_0}(s)\,ds\lesssim Y_0^2+X^2(t),
\end{equation}
for $0\leq t\leq T$.
\end{lemma}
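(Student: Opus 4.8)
The plan is to mimic the derivation of Lemma \ref{lem.n1w} one differentiation order lower, and then to integrate the resulting Lyapunov inequality, with the key observation that the only obstruction — a boundary-order electromagnetic inner product that, by the regularity-loss mechanism, sits just above the dissipation $\CD_{N_1-1,\ell_1,\la_0}(t)$ — is absorbed into $\CD_{N_1}(t)$, whose time integral has already been controlled in Lemma \ref{lem.ce1}. In particular, in contrast with the highest level $N_1$, no extra negative-power time weight is needed here.

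First I would repeat verbatim the argument producing \eqref{lem.n1w.p4}--\eqref{lem.n1w.1}, now with $N_1-1$ in place of $N_1$ (legitimate since $N_1-1\geq 8$), to obtain an energy functional $\CE_{N_1-1,\ell_1,\la_0}(t)$ satisfying \eqref{def.e} and
\[
\frac{d}{dt}\CE_{N_1-1,\ell_1,\la_0}(t)+\kappa\,\CD_{N_1-1,\ell_1,\la_0}(t)\lesssim \CE_{N_1-1,\ell_1,\la_0}(t)\,\CE_{N_0,\ell_0-1,\la_0}(t)+R(t),
\]
where $R(t)=\sum_{|\al|=N_1-1}\lag \pa^\al E\cdot\xi\mu^{1/2},w_{-\ell_1,\la_0}^2\pa^\al f\rag$. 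Here the inner products $\lag\pa^\al E\cdot\xi\mu^{1/2},w_{-\ell_1,\la_0}^2\pa^\al f\rag$ with $|\al|\leq N_1-2$ are, after Cauchy--Schwarz, absorbed into $\kappa\,\CD_{N_1-1,\ell_1,\la_0}(t)$ together with the term $\|E\|_{H^{N_1-2}}^2$ of the macro dissipation $\CD_{N_1-1,mac}(t)$ from Lemma \ref{lem.mad}, so only $|\al|=N_1-1$ survives; and the nonlinear terms are treated exactly as in the proof of \eqref{lem.n1w.p5}: the low-order factors carry at most $\lfloor(N_1-1)/2\rfloor+2<N_0$ derivatives and go into $\CE_{N_0,\ell_0,\la_0}(t)\leq\de^2$ or the decaying $\CE_{N_0,\ell_0-1,\la_0}(t)$, while $(1+t)^{1+\vth}\|\na_x(E,B)\|_{H^{N_0-1}}\lesssim\de$ by the definition of $X(t)$, so all $\de$-small pieces are absorbed.

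Next, for $\eta>0$ small I would bound $R(t)$, using that $\mu^{1/2}\lag\xi\rag\,w_{-\ell_1,\la_0}^2(t,\xi)$ is bounded uniformly in $t$ and $\xi$ (valid since $\la_0$ is small),
\[
R(t)\leq \eta\sum_{|\al|=N_1-1}\big\|\lag\xi\rag^{\frac{\ga+2}{2}}\pa^\al f\big\|_{-\ell_1,\la_0}^2+C_\eta\|E\|_{H^{N_1-1}}^2,
\]
where the first term is $\lesssim\eta\,\CD_{N_1-1,\ell_1,\la_0}(t)$ by Lemma \ref{lem.Ld} and the macro part of \eqref{def.dr}, while $\|E\|_{H^{N_1-1}}^2\lesssim\CD_{N_1}(t)$ because $\CD_{N,\ell,\la}(t)$ contains $\|E\|_{H^{N-1}}^2$. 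Choosing $\eta$ small and integrating the resulting inequality over $[0,t]$, I would use $\CE_{N_1-1,\ell_1,\la_0}(0)\lesssim Y_0^2$ from \eqref{def.Y0}; the bounds $\CE_{N_1-1,\ell_1,\la_0}(s)\leq X(t)$ and $\CE_{N_0,\ell_0-1,\la_0}(s)\leq(1+s)^{-3/2}X(t)$ implicit in the definition of $X(t)$ together with Lemma \ref{lem.n0wd}, which give $\int_0^t\CE_{N_1-1,\ell_1,\la_0}(s)\,\CE_{N_0,\ell_0-1,\la_0}(s)\,ds\lesssim X^2(t)$; and finally $\int_0^t\CD_{N_1}(s)\,ds\lesssim Y_0^2+X^2(t)$ from \eqref{ce1}. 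This yields \eqref{lem.n1bd.1}, the supremum being immediate since every bound above is nondecreasing in $t$.

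The only genuinely new point, and the step I expect to require the most care, is $\|E\|_{H^{N_1-1}}^2\lesssim\CD_{N_1}(t)$ followed by the invocation of Lemma \ref{lem.ce1}: by regularity-loss the boundary-order electric field sits one derivative above $\CD_{N_1-1,\ell_1,\la_0}(t)$ and cannot be absorbed there, so its time-integrability must be inherited from the already-established top-level estimate rather than re-derived. Everything else is the routine transcription of the nonlinear bookkeeping from the proof of Lemma \ref{lem.n1w}.
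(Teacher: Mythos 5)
Your proposal is correct and follows essentially the same route as the paper: derive the Lyapunov inequality at level $N_1-1$ exactly as in Lemma \ref{lem.n1w}, bound the nonlinear functional using the smallness and decay encoded in $X(t)$, and handle the boundary electromagnetic term $\sum_{|\al|=N_1-1}\lag\pa^\al E\cdot\xi\mu^{1/2},w_{-\ell_1,\la_0}^2\pa^\al f\rag$ by Cauchy--Schwarz with $\|E\|_{H^{N_1-1}}^2\lesssim\CD_{N_1}(t)$, whose time integral is inherited from Lemma \ref{lem.ce1}. This is precisely the paper's argument, including the key observation that no negative-power time weight is needed at this level.
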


\begin{proof}
Similarly for obtaining \eqref{lem.n1w.p4}, one has
\begin{multline}
\label{lem.n1bd.p1}
\frac{d}{dt}\CE_{N_1-1,\ell_1,\la_0}(t) +\ka \CD_{N_1-1,\ell_1,\la_0}(t)\lesssim \CI^{(2)}_{N_1-1,\ell_1,\la_0}(t) \\
+ \sum_{ |\al|= N_1-1} \lag\pa^\al E\cdot \xi \mu^{1/2},w_{-\ell_1,\la_0}^2\pa^\al f \rag,
\end{multline}
where $ \CI^{(2)}_{N_1-1,\ell_1,\la_0}(t)$ is defined by \eqref{def.i2} with $N_1$ replaced by $N_1-1$. 
The first term on the right can be bounded by
\begin{multline}
\label{lem.n1bd.p1-0}
 \CI^{(2)}_{N_1-1,\ell_1,\la_0}(t)\lesssim \CE_{N_1-1,\ell_1,\la_0}^{1/2}(t)\CD_{N_1-1,\ell_1,\la_0}(t) \\
 + \frac{1}{\la_0} (1+t)^{1+\vth} \|\na_x (E,B)\|_{H^{N_0-1}}\CD_{N_1-1,\ell_1,\la_0}(t)\\
 +\CE_{N_1-1}^{1/2}(t)\CE_{N_0,\ell_0-1,\la_0}^{1/2}(t) \CD_{N_1-1,\ell_1,\la_0}^{1/2}(t).
\end{multline}
Further using 
\begin{multline}
\notag
\sup_{0\leq s\leq t}\{\CE_{N_1-1}(s)+\CE_{N_1-1,\ell_1,\la_0}(s)+(1+s)^{2(1+\vth)} \|\na_x (E,B)\|_{H^{N_0-1}}^2\\
+(1+s)^{\frac{3}{2}}\CE_{N_0,\ell_0-1,\la_0}(s) \}\leq X(t)\leq \de^2,
\end{multline}
it follows that
\begin{equation}
\label{lem.n1bd.p2}
 \CI^{(2)}_{N_1-1,\ell_1,\la_0}(t)\lesssim \de\CD_{N_1-1,\ell_1,\la_0}(t)+(1+t)^{-\frac{3}{4}} X(t)\CD_{N_1-1,\ell_1,\la_0}^{1/2}(t).
\end{equation}
From the Cauchy-Schwarz inequality, the right-hand second term of \eqref{lem.n1bd.p1} is estimated by
\begin{multline}
\label{lem.n1bd.p3}
 \sum_{ |\al|= N_1-1} \lag\pa^\al E\cdot \xi \mu^{1/2},w_{-\ell_1,\la_0}^2\pa^\al f \rag\lesssim  \sum_{ |\al|= N_1-1} (\eta\|\lag \xi\rag^{\frac{\ga+2}{2}}\pa^\al f\|^2+\frac{1}{\eta}\|\pa^\al E\|^2\|)\\
 \lesssim \eta \CD_{N_1-1,\ell_1,\la_0}(t)+\frac{1}{\eta}\CD_{N_1}(t),
\end{multline}
for any $\eta>0$. Then, by  applying again the Cauchy-Schwarz inequality with $\eta$ to the right-hand second term of \eqref{lem.n1bd.p2}, plugging the resultant estimate together with \eqref{lem.n1bd.p3} into \eqref{lem.n1bd.p1}, and choosing $\eta>0$ small enough, one has\begin{equation}
\label{lem.n1bd.p4}
\frac{d}{dt}\CE_{N_1-1,\ell_1,\la_0}(t) +\ka \CD_{N_1-1,\ell_1,\la_0}(t)\lesssim \CD_{N_1}(t) + (1+t)^{-\frac{3}{2}}X^2(t).
\end{equation}
Recall that from \eqref{ce1.p5},
\begin{equation}
\notag
\int_0^t\CD_{N_1}(s)\,ds\lesssim Y_0^2+X^2(t). 
\end{equation}
Therefore, \eqref{lem.n1bd.1} follows by the time integration of \eqref{lem.n1bd.p4}. This completes the proof of Lemma \ref{lem.n1bd}.
\end{proof}

\subsection{Decay of $\CE_{N_1-3,\ell_1-1,\la_0}(t)$ and $\CE_{N_1-2}(t)$}

To obtain the closed estimate on the energy norm $X(t)$, it remains to obtain the time-decay of the high-order energy functional  $\CE_{N_1-3,\ell_1-1,\la_0}(t)$ and $\CE_{N_1-2}(t)$ through the time-weighted estimate as well as the iterative trick as for dealing with $\CE_{N_0,\ell_0-1,\la_0}(t)$ in Lemma \ref{lem.n0wd}. Notice that loss of three and loss of one in the smoothness and velocity weight indices  in $\CE_{N_1-3,\ell_1-1,\la_0}(t)$ respectively result from the regularity-loss of the electromagnetic field and the degeneration of collisional kernels for soft potentials.

\begin{lemma}\label{lem.n1wd}
It holds that
\begin{equation}
\label{lem.n1wd.1}
\sup_{0\leq s\leq t}\{(1+s)^{\frac{3}{2}}[\CE_{N_1-3,\ell_1-1,\la_0}(s)+\CE_{N_1-2}(s)]\}\lesssim Y_0^2+X^2(t),
\end{equation}
for $0\leq t\leq T$.
\end{lemma}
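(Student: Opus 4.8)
The plan is to carry out, at a further reduced regularity and velocity-weight level, the same time-weighted energy scheme already used for $\CE_{N_0,\ell_0-1,\la_0}(t)$ in Lemma~\ref{lem.n0wd} and for $\CE_{N_1-1,\ell_1,\la_0}(t)$ in Lemma~\ref{lem.n1bd}, now tuned so as to extract the rate $(1+t)^{-3/2}$. The two functionals $\CE_{N_1-3,\ell_1-1,\la_0}(t)$ and $\CE_{N_1-2}(t)$ must be treated simultaneously, because the weighted estimate at order $N_1-3$ injects $\CD_{N_1-2}(t)$ into its right-hand side through the Cauchy--Schwarz splitting of the top-order electric term, while the order-$(N_1-2)$ estimate injects $\frac{\de}{(1+t)^{1+\vth}}\CD_{N_1-3,\ell_1-1,\la_0}(t)$ through the time-velocity exponential weight; the factor $\de$ is what makes this coupling harmless.

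First I would record the relevant Lyapunov inequalities. Proceeding exactly as for \eqref{lem.n1w.p4}, \eqref{lem.n0w.p1} and \eqref{lem.n1bd.p1}, for each $\ell\in\{\ell_1-1,\ell_1-\tfrac12\}$ (admissible since $\ell_0-3N_0$ is large) there is an energy functional $\CE_{N_1-3,\ell,\la_0}(t)$ with
\begin{equation*}
\frac{d}{dt}\CE_{N_1-3,\ell,\la_0}(t)+\ka\CD_{N_1-3,\ell,\la_0}(t)\lesssim \CI^{(2)}_{N_1-3,\ell,\la_0}(t)+\sum_{|\al|=N_1-3}\lag \pa^\al E\cdot\xi\mu^{1/2},w_{-\ell,\la_0}^2\pa^\al f\rag,
\end{equation*}
and likewise the order-$(N_1-2)$ analogue of \eqref{lem.n1.1}. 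The only ingredient not already present in the proofs of Lemmas~\ref{lem.n1w}--\ref{lem.n0w} is the top-order linear term: since $N_1-3=(N_1-2)-1$, the Cauchy--Schwarz inequality bounds $\sum_{|\al|=N_1-3}\lag \pa^\al E\cdot\xi\mu^{1/2},w_{-\ell,\la_0}^2\pa^\al f\rag$ by $\eta\,\CD_{N_1-3,\ell,\la_0}(t)+C_\eta\sum_{|\al|=N_1-3}\|\pa^\al E\|^2\le\eta\,\CD_{N_1-3,\ell,\la_0}(t)+C_\eta\CD_{N_1-2}(t)$, so after absorbing $\eta$ it only contributes the \emph{unweighted} rate $\CD_{N_1-2}(t)$; and $\CI^{(2)}_{N_1-3,\ell,\la_0}(t)$ is estimated by copying \eqref{lem.n1w.p5}, which with $X(t)\le\de^2$ and Lemma~\ref{lem.emd} yields $\CI^{(2)}_{N_1-3,\ell,\la_0}(t)\lesssim\de\,\CD_{N_1-3,\ell,\la_0}(t)+(1+t)^{-3/4}X(t)\,\CD_{N_1-3,\ell,\la_0}^{1/2}(t)$, exactly as in \eqref{lem.n1bd.p2}.

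Then comes the time-weighted iteration, as in the proof of Lemma~\ref{lem.n0wd}: multiply the $\ell=\ell_1-\tfrac12$ inequality by $(1+t)^{1/2+\eps}$ and the $\ell=\ell_1-1$ inequality by $(1+t)^{3/2+\eps}$ with $\eps>0$ small, integrate in $t$, and combine, using the half-step monotonicity $|w^{\ell}\{I-P\}\hat f|_{\FD}\gtrsim|w^{\ell-1/2}\{I-P\}\hat f|_{L^2}$ (cf.\ the remark after \eqref{ad.p1}) together with the macro part of $\CD_{N_1-3,\ell,\la_0}(t)$ to control $\int_0^t(1+s)^{1/2+\eps}\CE_{N_1-3,\ell_1-1,\la_0}(s)\,ds$ by the $\ell=\ell_1-\tfrac12$ level, up to the zeroth-order components $(a,b,c,B)$. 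Those leftovers, together with the terms $\int_0^t(1+s)^{3/2+\eps}\CD_{N_1-2}(s)\,ds$ produced by the electric term, are disposed of by: the bounds $\int_0^t\CD_{N_1}(s)\,ds\lesssim Y_0^2+X^2(t)$ and $\int_0^t\CD_{N_1-1,\ell_1,\la_0}(s)\,ds\lesssim Y_0^2+X^2(t)$ from \eqref{ce1.p5} and Lemma~\ref{lem.n1bd} (whence also $\int_0^t\CD_{N_1-2}(s)\,ds\lesssim Y_0^2+X^2(t)$ and, jointly with the $\CE_{N_1-2}$ estimate, its time-weighted refinement); the decays $\|(a,b,c,E,B)\|^2\lesssim(1+t)^{-3/2}(Y_0^2+X^2(t))$ and $\|\na_x(E,B)\|^2_{H^{N_0-1}}\lesssim(1+t)^{-5/2}(Y_0^2+X^2(t))$ from Lemma~\ref{lem.emd}; and, for the low-order macro and electromagnetic pieces of $\CE_{N_1-2}(t)$ itself, the linearized decay of Theorem~\ref{thm.lb} fed through the mild form exactly as in Lemma~\ref{lem.emd}. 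Because $\eps<1/2$ and $\vth\le1/4$, after all these substitutions every residual integrand carries either a $(1+t)^{-1-\vth}$ gain from the exponential weight or a power $\le-1+\eps$, so every residual integral is $\lesssim(1+t)^{\eps}(Y_0^2+X^2(t))$; thus $(1+t)^{3/2+\eps}[\CE_{N_1-3,\ell_1-1,\la_0}(t)+\CE_{N_1-2}(t)]\lesssim(1+t)^{\eps}(Y_0^2+X^2(t))$, which is \eqref{lem.n1wd.1}.

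The main obstacle is keeping this bookkeeping consistent under the two simultaneous losses. Regularity-loss of $(E,B)$ forbids treating $\pa^{N_1-3}E$ as a decaying low-order quantity, so it must be routed through $\CD_{N_1-2}(t)$; this forces the weighted functional down to order $N_1-3$ rather than $N_1-2$ and requires $\CD_{N_1}$, $\CD_{N_1-1,\ell_1,\la_0}$ and $\CD_{N_1-2}$ all to have been controlled beforehand, which is why $N_1$ is taken large relative to $N_0$. Simultaneously the soft-potential degeneration of $L$ costs half a velocity weight per energy step, so the index $\ell_1-1$ is reached only through the two-stage ladder $\ell_1\to\ell_1-\tfrac12\to\ell_1-1$, and one must check that the coupling term $\frac{\de}{(1+t)^{1+\vth}}\CD_{N_1-3,\ell_1-1,\la_0}(t)$ appearing in the order-$(N_1-2)$ inequality, after multiplication by $(1+t)^{3/2+\eps}$, is dominated --- using $\de\ll1$ and $\vth\le\tfrac14$ --- by the dissipation already available on the left at the $\ell_1-\tfrac12$ stage; the whole scheme closes precisely because the target rate $(1+t)^{-3/2}$ leaves the slack $(1+t)^{\eps}$ that each averaging and Cauchy--Schwarz step consumes.
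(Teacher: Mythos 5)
Your overall strategy (time-weighted Lyapunov inequalities, an iteration in the weight index, and routing the top-order electric term through the dissipation of an unweighted companion) is the right one, but the architecture you set up does not close, for two connected reasons. First, you place the entire weighted ladder at the single regularity level $N_1-3$ (weights $\ell_1-\tfrac12$ and $\ell_1-1$) and pair it only with the unweighted functional $\CE_{N_1-2}$. The coupling term that the exponential weight injects into the \emph{unweighted} Lyapunov inequality at order $N_1-2$ is $\frac{\de}{(1+t)^{1+\vth}}\CD_{N_1-2,\ell,\la_0}(t)$: it lives at order $N_1-2$, not $N_1-3$ as you assert, because the velocity-growth terms $E\cdot\xi\,\pa^{\al}f$ with $|\al|=N_1-2$ must be absorbed by the $\frac{\la_0}{(1+t)^{1+\vth}}\|\lag\xi\rag\pa^{\al}\{I-P\}f\|^2$ piece of the dissipation at that same order. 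After multiplication by $(1+t)^{3/2+\eps}$ this becomes $\de(1+s)^{1/2+\eps-\vth}\CD_{N_1-2,\ell,\la_0}(s)$ with a \emph{growing} time factor (since $\vth\le\tfrac14$), so neither the unweighted integral $\int_0^t\CD_{N_1-1,\ell_1,\la_0}(s)\,ds$ from Lemma \ref{lem.n1bd} nor your order-$(N_1-3)$ ladder can control it. Second, multiplying the $\CE_{N_1-2}$ inequality by $(1+t)^{3/2+\eps}$ produces $\int_0^t(1+s)^{1/2+\eps}\CE_{N_1-2}(s)\,ds$, and the top electromagnetic part $\|\pa^{\al}(E,B)\|^2$, $|\al|=N_1-2$, of $\CE_{N_1-2}$ neither decays (regularity loss, since $N_1-2>N_0$) nor belongs to $\CD_{N_1-2}$; it is only dominated by $\CD_{N_1-1}$. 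So you need $\int_0^t(1+s)^{1/2+\eps}\CD_{N_1-1}(s)\,ds\lesssim Y_0^2+X^2(t)$, which is precisely the ``time-weighted refinement'' you invoke but never construct.

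The paper resolves both points with an intermediate stage that is missing from your scheme: it first runs the pair $\CE_{N_1-1}$ (unweighted) and $\CE_{N_1-2,\ell_1-\frac12,\la_0}$ at the rate $(1+t)^{1/2+\eps}$, with $\|\pa^{\al}E\|^2$, $|\al|=N_1-2$, absorbed into $\CD_{N_1-1}$ on the left and the coupling $\de(1+s)^{-1/2-\vth+\eps}\CD_{N_1-1,\ell_1,\la_0}(s)$ controlled by Lemma \ref{lem.n1bd}; this yields $\int_0^t(1+s)^{1/2+\eps}\{\CD_{N_1-1}(s)+\CD_{N_1-2,\ell_1-\frac12,\la_0}(s)\}\,ds\lesssim Y_0^2+X^2(t)$. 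Only with this in hand does the second pair, $\CE_{N_1-2}$ with $\CE_{N_1-3,\ell_1-1,\la_0}$ at rate $(1+t)^{3/2+\eps}$, close. The descent must be simultaneous in regularity and in weight --- one derivative and half a weight index per half-power of $t$ --- not in the weight index alone at the fixed order $N_1-3$.
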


\begin{proof}
First recall from Lemma \ref{lem.ce1} and Lemma \ref{lem.n1bd}
\begin{equation}
\label{lem.n1wd.p1}
\CE_{N_1}(t)+\CE_{N_1-1,\ell_1,\la_0}(t)+ \int_0^t \CD_{N_1}(s)+\CD_{N_1-1,\ell_1,\la_0}(s)\,ds \lesssim Y_0^2+X^2(t).
\end{equation}
To obtain the time-decay of $\CE_{N_1-3,\ell_1-1,\la_0}(t)$ and $\CE_{N_1-2}(t)$, we will make the time-weighted estimate. For brevity of presentation we write
\begin{equation}
\notag
\CJ^{(2)}_{N,\ell,\la_0}(t)=\sum_{|\al|=N}\lag \pa^\al E\cdot \xi \mu^{1/2}, w_{-\ell,\la_0}^2\pa^\al f\rag.
\end{equation}
From the proof of Lemma \ref{lem.n1} and Lemma \ref{lem.n1w}, cf.~\eqref{lem.n1.p5} and \eqref{lem.n1w.p4}, one has the Lyapunov inequalities
\begin{equation}
\label{lem.n1wd.p2}
\left\{\begin{split}
&\frac{d}{dt} \CE_{N_1-1}(t)+\ka \CD_{N_1-1}(t)\lesssim \CI^{(1)}_{N_1-1}(t),\\
&\dis \frac{d}{dt} \CE_{N_1-2,\ell_1-\frac{1}{2},\la_0}(t)+\kappa \CD_{N_1-2,\ell_1-\frac{1}{2},\la_0}(t)\\
&\qquad\qquad\qquad\qquad\qquad\qquad\lesssim \CI^{(2)}_{N_1-2,\ell_1-\frac{1}{2},\la_0}(t) + \CJ^{(2)}_{N_1-2,\ell_1-\frac{1}{2},\la_0}(t).
\end{split}\right.
\end{equation}
Those terms on the right can be estimated as follows. Similar to \eqref{lem.n1.p6}, it holds that
\begin{multline}
\notag
\CI^{(1)}_{N_1-1}(t)\lesssim  \CE_{N_1-1}^{1/2}(t)\CD_{N_1-1}(t) +\frac{\de}{(1+t)^{1+\vth}}\CD_{N_1-1,\ell_1,\la_0}(t)\\
+\CE_{N_1-1}^{1/2}(t) \CE_{N_0,\ell_0-1,\la_0}^{1/2}(t)\CD_{N_1-1}^{1/2}(t).
\end{multline}
Here, noticing that $ \CE_{N_1-1}^{1/2}(t)\leq X^{1/2}(t)\leq \de$ is small enough for the first term on the right and applying the Cauchy-Schwarz inequality  to the third term on the right, it then follows from the first equation of \eqref{lem.n1wd.p2} that
\begin{multline}
\label{lem.n1wd.p3}
\frac{d}{dt} \CE_{N_1-1}(t)+\ka \CD_{N_1-1}(t)\\
\lesssim \frac{\de}{(1+t)^{1+\vth}}\CD_{N_1-1,\ell_1,\la_0}(t)+\CE_{N_1-1}(t) \CE_{N_0,\ell_0-1,\la_0}(t).
\end{multline}
Moreover, similar to \eqref{lem.n1bd.p1-0}, it holds that
\begin{multline}
\notag
 \CI^{(2)}_{N_1-2,\ell_1-\frac{1}{2},\la_0}(t)\lesssim \CE_{N_1-2,\ell_1-\frac{1}{2},\la_0}^{1/2}(t)\CD_{N_1-2,\ell_1-\frac{1}{2},\la_0}(t) \\
 + \frac{1}{\la_0} (1+t)^{1+\vth} \|\na_x (E,B)\|_{H^{N_0-1}}\CD_{N_1-2,\ell_1-\frac{1}{2},\la_0}(t)\\
 +\CE_{N_1-2}^{1/2}(t)\CE_{N_0,\ell_0-1,\la_0}^{1/2}(t) \CD_{N_1-2,\ell_1-\frac{1}{2},\la_0}^{1/2}(t),
\end{multline}
which by using $X(t)\leq \de^2$ for the first two terms on the right and the Cauchy-Schwarz inequality for the last term, further implies
\begin{equation}
\label{lem.n1wd.p4}
 \CI^{(2)}_{N_1-2,\ell_1-\frac{1}{2},\la_0}(t)\lesssim (\de+\eta)\CD_{N_1-2,\ell_1-\frac{1}{2},\la_0}(t) 
 +\frac{1}{\eta}\CE_{N_1-2}(t)\CE_{N_0,\ell_0-1,\la_0}(t),
 \end{equation}
for $\eta>0$. Again from the Cauchy-Schwarz inequality with $\eta>0$,
\begin{equation}
\label{lem.n1wd.p5}
 \CJ^{(2)}_{N_1-2,\ell_1-\frac{1}{2},\la_0}(t)\lesssim \sum_{|\al|=N_1-2}(\eta\|\lag \xi\rag^{\frac{\ga+2}{2}}\pa^\al f\|^2+ \frac{1}{\eta}\|\pa^\al E\|^2).
\end{equation}
Then, by plugging \eqref{lem.n1wd.p4} and \eqref{lem.n1wd.p5} into the second equation of \eqref{lem.n1wd.p2}, taking the sum of the resultant inequality multiplied by a proper small constant {$\kappa_3>0$} and another inequality \eqref{lem.n1wd.p3}, and using smallness of $\de>0$ and $\eta>0$, one has
\begin{multline}
\label{lem.n1wd.p6}
\frac{d}{dt} \{\CE_{N_1-1}(t)+\kappa_3 \CE_{N_1-2,\ell_1-\frac{1}{2},\la_0}(t)\}
+\kappa \{\CD_{N_1-1}(t)+\kappa_3 \CD_{N_1-2,\ell_1-\frac{1}{2},\la_0}(t)\}\\
\lesssim \frac{\de}{(1+t)^{1+\vth}} \CD_{N_1-1,\ell_1,\la_0}(t) +\CE_{N_1-1}(t)\CE_{N_0,\ell_0-1,\la_0}(t).
\end{multline}
Further from multiplying it by $(1+t)^{\frac{1}{2}+\eps}$ with $\eps>0$ fixed small enough and taking the time integration, it follows 
\begin{multline}
\label{lem.n1wd.p7}
(1+t)^{\frac{1}{2}+\eps}\{\CE_{N_1-1}(t)+\CE_{N_1-2,\ell_1-\frac{1}{2},\la_0}(t)\}\\
+\int_0^t (1+s)^{\frac{1}{2}+\eps}\{\CD_{N_1-1}(s)+\CD_{N_1-2,\ell_1-\frac{1}{2},\la_0}(s)\}\,ds\\
\lesssim Y_0^2 
+\int_0^t \de(1+s)^{-\frac{1}{2}-\vth+\eps} \CD_{N_1-1,\ell_1,\la_0}(s)\,ds \\
+\int_0^t  (1+s)^{\frac{1}{2}+\eps}\CE_{N_1-1}(s)\CE_{N_0,\ell_0-1,\la_0}(s)\,ds\\
+\int_0^t(1+s)^{-\frac{1}{2}+\eps}\{\CE_{N_1-1}(s)+\CE_{N_1-2,\ell_1-\frac{1}{2},\la_0}(s)\}\,ds.
\end{multline}
Here, since $\eps>0$ is small enough, the second term on the right is bounded by $Y_0^2+X^2(t)$ directly by \eqref{lem.n1wd.p1}, the third term on the right is bounded by 
\begin{equation}
\notag
\de^2\sup_{0\leq s\leq t}\{(1+s)^{\frac{1}{2}+\eps}\CE_{N_1-1}(s)\},
\end{equation}
due to the fact that 
\begin{equation}
\notag
\sup_{0\leq s\leq t}\{(1+s)^{\frac{3}{2}}\CE_{N_0,\ell_0-1,\la_0}(s)\}\leq X(t)\leq \de^2,
\end{equation}
and the fourth term on the right is bounded by $Y_0^2+X^2(t)$ by noticing
\begin{equation}
\notag
 \CE_{N_1-1}(t)+ \CE_{N_1-2,\ell_1-\frac{1}{2},\la_0}(t)\lesssim \CD_{N_1}(t)+\CD_{N_1-1,\ell_1,\la_0}(t)+\|(a,b,c,B)\|^2,
\end{equation}
and further using \eqref{lem.n1wd.p1} as well as Lemma \ref{lem.emd}. Hence, we arrive from \eqref{lem.n1wd.p7} at
\begin{multline}
\label{lem.n1wd.p8}
\sup_{0\leq s\leq t}\{(1+s)^{\frac{1}{2}+\eps}[\CE_{N_1-1}(s)+\CE_{N_1-2,\ell_1-\frac{1}{2},\la_0}(s)]\}\\
+\int_0^t (1+s)^{\frac{1}{2}+\eps}\{\CD_{N_1-1}(s)+\CD_{N_1-2,\ell_1-\frac{1}{2},\la_0}(s)\}\,ds
\lesssim Y_0^2 +X^2(t).
\end{multline}
In a similar way to obtain \eqref{lem.n1wd.p6}, starting with the Lyapunov inequalities
\begin{equation}
\notag
\left\{\begin{split}
&\frac{d}{dt} \CE_{N_1-2}(t)+\ka \CD_{N_1-2}(t)\lesssim \CI^{(1)}_{N_1-2}(t),\\
&\dis \frac{d}{dt} \CE_{N_1-3,\ell_1-1,\la_0}(t)+\kappa \CD_{N_1-3,\ell_1-1,\la_0}(t)\\
&\qquad\qquad\qquad\qquad\qquad\qquad\lesssim \CI^{(2)}_{N_1-3,\ell_1-1,\la_0}(t) + \CJ^{(2)}_{N_1-3,\ell_1-1,\la_0}(t),
\end{split}\right.
\end{equation}
one can prove
\begin{multline}
\notag
\frac{d}{dt} \{\CE_{N_1-2}(t)+{\ka_{4}} \CE_{N_1-3,\ell_1-1,\la_0}(t)\}
+\kappa \{\CD_{N_1-2}(t)+\kappa_{4} \CD_{N_1-2,\ell_1-1,\la_0}(t)\}\\
\lesssim \frac{\de}{(1+t)^{1+\vth}} \CD_{N_1-2,\ell_1-\frac{1}{2},\la_0}(t) +\CE_{N_1-2}(t)\CE_{N_0,\ell_0-1,\la_0}(t),
\end{multline}
for a properly chosen constant $\ka_{4}>0$. Further multiplying it by $(1+t)^{\frac{3}{2}+\eps}$ and taking the time integration gives 
\begin{multline}
\label{lem.n1wd.p9}
(1+t)^{\frac{3}{2}+\eps}\{\CE_{N_1-2}(t)+\CE_{N_1-3,\ell_1-1,\la_0}(t)\}\\
+\int_0^t (1+s)^{\frac{3}{2}+\eps}\{\CD_{N_1-2}(s)+\CD_{N_1-3,\ell_1-1,\la_0}(s)\}\,ds\\
\lesssim Y_0^2 
+\int_0^t \de(1+s)^{\frac{1}{2}+\eps-\vth} \CD_{N_1-2,\ell_1-\frac{1}{2},\la_0}(s)\,ds \\
+\int_0^t  (1+s)^{\frac{3}{2}+\eps}\CE_{N_1-2}(s)\CE_{N_0,\ell_0-1,\la_0}(s)\,ds\\
+\int_0^t(1+s)^{\frac{1}{2}+\eps}\{\CE_{N_1-2}(s)+\CE_{N_1-3,\ell_1-1,\la_0}(s)\}\,ds.
\end{multline}
Here, notice again that $\eps>0$ is a fixed constant small enough. Then, the second term on the right is bounded by $Y_0^2+X^2(t)$  by \eqref{lem.n1wd.p8}, the third term on the right is bounded by  $X^2(t)$ due to 
\begin{equation}
\notag
\sup_{0\leq s\leq t}\{(1+s)^{\frac{3}{2}}[\CE_{N_1-2}(s)+\CE_{N_0,\ell_0-1,\la_0}(s)]\}\leq X(t),
\end{equation}
and as before, the fourth term on the right is bounded by 
\begin{equation}
\notag
C(1+t)^{\eps}[Y_0^2+X^2(t)]
\end{equation}
by noticing
\begin{equation}
\notag
 \CE_{N_1-2}(t)+ \CE_{N_1-3,\ell_1-1,\la_0}(t)\lesssim \CD_{N_1-1}(t)+\CD_{N_1-2,\ell_1-\frac{1}{2},\la_0}(t)+\|(a,b,c,B)\|^2,
\end{equation}
and further using \eqref{lem.n1wd.p8} as well as Lemma \ref{lem.emd}. Therefore,  the desired inequality \eqref{lem.n1wd.1} follows by putting these estimates into \eqref{lem.n1wd.p9}. This then completes the proof of Lemma \ref{lem.n1wd}.
\end{proof}

\subsection{Global existence}

We are now in a position to complete the 

\begin{proof}[Proof of Theorem \ref{thm.gl}]

Recall $X$-norm. From Lemma \ref{lem.ce1}, Lemma \ref{lem.emd}, Lemma \ref{lem.n0wd}, Lemma \ref{lem.n1bd} and Lemma \ref{lem.n1wd}, it follows that
\begin{equation}
\notag
X(t)\lesssim Y_0^2+X^2(t).
\end{equation}
Since $Y_0$ is sufficiently small, \eqref{thm.gl.1} holds true. 
The global existence follows.
\end{proof}

\noindent {\bf Acknowledgements:} This work was supported by the General
Research Fund (Project No.~400511) from RGC of Hong Kong. The author would thank Shuangqian Liu, Tong Yang and Huijiang Zhao for their fruitful discussions on the topic, and also Robert Strain for pointing out his work \cite{St-Op}.

\end{document}